\newtheorem{Theorem}[equation]{Theorem}
\newtheorem{Corollary}[equation]{Corollary}
\newtheorem{Lemma}[equation]{Lemma}
\newtheorem{Proposition}[equation]{Proposition}
\theoremstyle{definition}
\newtheorem{Definition}[equation]{Definition}
\theoremstyle{remark}
\newtheorem{Remark}[equation]{Remark}
\numberwithin{equation}{section}
\newtheorem{Claim}[equation]{Claim}
\DeclareMathOperator{\re}{re}
\DeclareMathOperator{\ev}{ev}
\DeclareMathOperator{\ad}{ad}
\newcommand{\ve}{\varepsilon}
\newcommand{\relmiddle}[1]{\mathrel{}\middle#1\mathrel{}}
\newcommand{\plim}[1][]{\mathop{\varprojlim}\limits_{#1}}
\begin{document}
\title{Construction of Affine Super Yangian}
\author{Mamoru Ueda}
\date{}
\maketitle
\begin{abstract}
In this paper, we define the affine super Yangian $Y_{\ve_1,\ve_2}(\widehat{\mathfrak{sl}}(m|n))$ with a coproduct structure.
We also obtain an evaluation homomorphism,
that is, an algebra homomorphism from $Y_{\ve_1,\ve_2}(\widehat{\mathfrak{sl}}(m|n))$ to the completion
of the universal enveloping algebra of $\widehat{\mathfrak{gl}}(m|n)$. \footnote[0]{{\bf 2020 Math. Subject
Classification;} 17B37\\
\qquad{\bf Institute;} Research Institute for Mathematical Sciences\\
\qquad{\bf adress;} Research Institute for Mathematical Sciences, Kyoto University, Kyoto 606-8502 JAPAN\\
\qquad Communicated by T. Arakawa. Received
 October 17, 2020. Revised April 14, 2021; May 2, 2021.}
\end{abstract}
\section{Introduction}

Drinfeld (\cite{D1}, \cite{D2})  defined the Yangian of a finite dimensional simple Lie algebra $\mathfrak{g}$ in order to obtain a solution of the Yang-Baxter equation. The Yangian is a quantum group which is the deformation of the current algebra $\mathfrak{g}[z]$. He defined it by three different presentations. One of those presentations is called the Drinfeld presentation whose generators are $\{h_{i,r},x^\pm_{i,r}\mid r\in\mathbb{Z}_{\geq0}\}$,
where $\{h_i,x^\pm_i\}$  are Chevalley generators of $\mathfrak{g}$.
The definition of Yangian as an associative algebra naturally extends to the case that $\mathfrak{g}$ is a symmetrizable Kac-Moody Lie algebra in the Drinfeld presentation.
Defining its quasi-Hopf algebra structure is more involved, but this problem has been settled for affine Kac-Moody Lie algebras in \cite{GNW}, \cite{BL} and \cite{U}.

It is known that the Yangians are closely related to $W$-algebras. It was shown in \cite{RS} that there exist surjective homomorphisms from Yangians of type $A$ to rectangular finite $W$-algebras of type $A$. More generally, Brundan and Kleshchev (\cite{BK}) constructed a surjective homomorphism from a shifted Yangian, a subalgebra of the Yangian of type $A$, to a finite $W$-algebra of type $A$.
Using a geometric realization of the Yangian, Schiffmann and Vasserot (\cite{SV}) have constructed a surjective homomorphism from the Yangian of $\widehat{\mathfrak{gl}}(1)$ to the universal enveloping algebra of the principal $W$-algebra of type $A$, and proved the celebrated AGT conjecture (\cite{Ga}, \cite{BFFR}).

In the case of the Lie superalgebra $\mathfrak{sl}(m|n)$, the corresponding Yangian in the Drinfeld presentation was first introduced by Stukopin (\cite{S}, see also \cite{G}).
The relationship between Yangians and $W$-algebras were also studied in the case of finite Lie superalgbras;
by Briot and Ragoucy \cite{BR} for $\mathfrak{sl}(m|n)$ and
by Peng \cite{P} for $\mathfrak{gl}(1|n)$. In the recent paper \cite{GLPZ},
Gaberdiel, Li, Peng and H. Zhang defined the Yangian $\widehat{\mathfrak{gl}}(1|1)$
for the affine Lie superalgebra $\widehat{\mathfrak{gl}}(1|1)$
and obtained the similar result as \cite{SV} in the super setting.

In this article we define the affine super Yangian $Y_{\ve_1,\ve_2}(\widehat{\mathfrak{sl}}(m|n))$ as a quantum group (=an associative algebra equipped with a coproduct satisfying compatibility conditions) in the Drinfeld presentation. We upgrade the definition of the Yangian associated with $\mathfrak{sl}(m|n)$ of Gow \cite{G} to define the affine super Yangian $Y_{\ve_1,\ve_2}(\widehat{\mathfrak{sl}}(m|n))$ as an associative algebra, see Definition \ref{Def}.
However, to define the coproduct for $Y_{\ve_1,\ve_2}(\widehat{\mathfrak{sl}}(m|n))$, we need to obtain
yet another presentation, that is, a {\em minimalistic presentation}. 
\begin{Theorem}\label{T1}
The affine super Yangian $Y_{\ve_1,\ve_2}(\widehat{\mathfrak{sl}}(m|n))$ is isomorphic to the associative superalgebra over $\mathbb{C}$ generated by $x_{i,r}^{+}, x_{i,r}^{-}, h_{i,r}$ $(0\leq i\leq m+n-1, r = 0,1)$ subject to the defining relations \eqref{eq2.1}-\eqref{eq2.9}.
\end{Theorem}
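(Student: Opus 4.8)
The plan is to realize the claimed isomorphism by a pair of mutually inverse homomorphisms between the two presentations. Write $Y:=Y_{\ve_1,\ve_2}(\widehat{\mathfrak{sl}}(m|n))$ for the full Drinfeld presentation, with generators $x^{\pm}_{i,r},h_{i,r}$ for all $r\in\mathbb{Z}_{\geq 0}$, and let $\mathcal{Y}$ denote the superalgebra presented by $x^{\pm}_{i,r},h_{i,r}$ $(r=0,1)$ subject to \eqref{eq2.1}--\eqref{eq2.9}. Since each of \eqref{eq2.1}--\eqref{eq2.9} is either literally a defining relation of $Y$ restricted to $r,s\in\{0,1\}$ or an immediate consequence of such relations, the assignment sending every generator of $\mathcal{Y}$ to the generator of $Y$ of the same name preserves all relations and hence extends to a homomorphism $\Phi\colon\mathcal{Y}\to Y$. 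The whole force of the theorem is that $\Phi$ is bijective, and I would prove this by exhibiting an explicit inverse $\Psi\colon Y\to\mathcal{Y}$.

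To construct $\Psi$ I first introduce the higher generators inside $\mathcal{Y}$ by recursion. Using the relation among \eqref{eq2.1}--\eqref{eq2.9} that controls $[h_{i,1},x^{\pm}_{j,0}]$, I would form a corrected Cartan element $\widetilde h_{i,1}:=h_{i,1}-\tfrac12\,c_i\,h_{i,0}^{2}$, where $c_i$ is a scalar determined by $\ve_1,\ve_2$ and the symmetrizing data, chosen precisely so that $\ad(\widetilde h_{i,1})$ acts on $x^{\pm}_{i,0}$ as a clean degree shift $[\widetilde h_{i,1},x^{\pm}_{i,0}]=\pm\,d_i\,x^{\pm}_{i,1}$. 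Then for $r\geq 1$ I would define, in $\mathcal{Y}$,
\[
x^{\pm}_{i,r+1}:=\pm d_i^{-1}\,[\widetilde h_{i,1},x^{\pm}_{i,r}],\qquad h_{i,r}:=[x^{+}_{i,r},x^{-}_{i,0}],
\]
with the brackets understood as super-commutators weighted by the parities $|i|$. Already at this stage one must check that these elements are well defined, i.e.\ independent of whether a higher $x^{\pm}_{i,r}$ is built up through $\ad(\widetilde h_{i,1})$ or reached via a bracket with a neighbouring node; this independence is itself one of the relations to be verified below.

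The heart of the argument, and the step I expect to be the main obstacle, is then to show that the family $\{x^{\pm}_{i,r},h_{i,r}\}_{r\geq 0}$ just defined satisfies the \emph{entire} set of Drinfeld relations of $Y$, not merely their $r,s\in\{0,1\}$ shadows. I would organize this as an induction on $r+s$: at each level the relation in question is rewritten, via the recursion and the super Jacobi identity, in terms of relations of strictly lower total degree, the base cases being exactly \eqref{eq2.1}--\eqref{eq2.9}. The delicate points are the cross relations with $i\neq j$, the quadratic and cubic super Serre relations (where the parity signs and the special behaviour at odd simple roots of $\widehat{\mathfrak{sl}}(m|n)$ enter), and the affine node $i=0$. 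A workhorse lemma I would establish first is the degree-shift identity $[x^{\pm}_{i,r+1},x^{\pm}_{j,s}]\equiv[x^{\pm}_{i,r},x^{\pm}_{j,s+1}]$ modulo explicitly controlled anticommutator terms, since nearly every inductive step leans on moving the degree between the two factors; tracking the $\ve_1,\ve_2$-corrections and the signs uniformly across all parities is where the bookkeeping is heaviest.

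Once all relations are verified, the recursion defines a homomorphism $\Psi\colon Y\to\mathcal{Y}$. It remains to confirm that $\Phi$ and $\Psi$ are mutually inverse, which is purely formal. The same recursive formulas hold in $Y$ as consequences of its defining relations, so $\Phi\circ\Psi$ fixes each generator $x^{\pm}_{i,r},h_{i,r}$ of $Y$ and hence is $\id_Y$; conversely $\Psi\circ\Phi$ fixes the generators $x^{\pm}_{i,r},h_{i,r}$ $(r=0,1)$ of $\mathcal{Y}$, which generate $\mathcal{Y}$ by construction, so $\Psi\circ\Phi=\id_{\mathcal{Y}}$. This yields the asserted isomorphism.
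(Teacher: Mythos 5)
Your overall architecture is the same as the paper's: the map from the minimalistic presentation to the full Drinfeld presentation is the easy direction, the inverse is built by defining the higher modes recursively inside the minimalistic algebra, and the substance is an induction showing that all Drinfeld relations follow from \eqref{eq2.1}--\eqref{eq2.9}. However, there is a genuine gap at the very first step of your recursion. You define $x^{\pm}_{i,r+1}:=\pm d_i^{-1}[\widetilde h_{i,1},x^{\pm}_{i,r}]$ using the Cartan element of the \emph{same} node $i$, with $d_i$ the eigenvalue from $[\widetilde h_{i,1},x^{\pm}_{i,0}]=\pm d_i\,x^{\pm}_{i,1}$. For the isotropic odd nodes $i=0$ and $i=m$ of $\widehat{\mathfrak{sl}}(m|n)$ one has $a_{i,i}={(-1)}^{p(i)}+{(-1)}^{p(i+1)}=0$, so by \eqref{eq2.4} and \eqref{eq2.5} every Cartan element of node $i$ commutes (or super-commutes to zero) with $x^{\pm}_{i,r}$: the eigenvalue is $d_i=0$, $d_i^{-1}$ does not exist, and the recursion produces nothing at exactly the nodes that make the super case different from the even case. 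The paper circumvents this by defining the higher modes at $i=0,m$ through the \emph{adjacent} node, $x^\pm_{i,r+1}=\pm\frac{1}{a_{i+1,i}}[\widetilde{h}_{i+1,1},x^\pm_{i,r}]+b_{i+1,i}\frac{\varepsilon_1-\varepsilon_2}{2}x^\pm_{i,r}$ (relation \eqref{eq1298}), where the extra $b_{i+1,i}$-correction is unavoidable because $b_{i,j}\neq 0$ off the diagonal.

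This is not merely a notational fix, because your ``workhorse'' degree-shift lemma also leans on $\ad(\widetilde h_{i,1})$ acting cleanly on node $i$, which is identically zero at $i=0,m$; so the inductive verification for those nodes cannot be run the way you describe. In the paper this is where most of the real work sits: the relations \eqref{eq1.1}, \eqref{eq1.2}, \eqref{eq1.4}, \eqref{eq1.5} for $i=j=0,m$ (Lemma~\ref{Lemma31}) require auxiliary second-order Cartan elements $\widehat h_{i+1,2}$ in the style of Levendorskii, separate arguments such as Lemma~\ref{Lem2} and Lemma~\ref{Lem45} to get $[h_{i,2},h_{i,1}]=0$ indirectly from neighbouring non-isotropic nodes, and a linear-independence argument combining three relations (\eqref{er1}, \eqref{er2}, \eqref{er3}) to pin down $[x^{\pm}_{i,2},x^{\pm}_{i,0}]$. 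You do flag the well-definedness of the higher modes and the odd simple roots as delicate points, which is the right instinct, but as written the recursion underlying your $\Psi$ is undefined at $i=0,m$, and the proposal contains no substitute mechanism for generating and controlling the higher modes there.
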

By Theorem~\ref{T1}, the following assertion gives a coproduct $\Delta$  for $Y_{\ve_1,\ve_2}(\widehat{\mathfrak{sl}}(m|n))$
that is compatible with the defining relations \eqref{eq2.1}-\eqref{eq2.9}.
\begin{Theorem}
We can define an algebra homomorphism 
\begin{equation*}
{\Delta} \colon Y_{\ve_1,\ve_2}(\widehat{\mathfrak{sl}}(m|n))\rightarrow
 Y_{\ve_1,\ve_2}(\widehat{\mathfrak{sl}}(m|n)) \widehat{\otimes}  Y_{\ve_1,\ve_2}(\widehat{\mathfrak{sl}}(m|n))
\end{equation*}
that satisfies the coassociativity.
 Here,
$Y_{\ve_1,\ve_2}(\widehat{\mathfrak{sl}}(m|n)) \widehat{\otimes}  Y_{\ve_1,\ve_2}(\widehat{\mathfrak{sl}}(m|n))$ is the degreewise completion of $Y_{\ve_1,\ve_2}(\widehat{\mathfrak{sl}}(m|n)) \otimes  Y_{\ve_1,\ve_2}(\widehat{\mathfrak{sl}}(m|n))$ in the sense of \cite{MNT}.
\end{Theorem}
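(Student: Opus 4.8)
The plan is to exploit the minimalistic presentation established in Theorem~\ref{T1}. Because $Y_{\ve_1,\ve_2}(\widehat{\mathfrak{sl}}(m|n))$ is generated by the finitely many elements $x_{i,r}^{\pm}, h_{i,r}$ with $0 \le i \le m+n-1$ and $r \in \{0,1\}$, subject only to \eqref{eq2.1}--\eqref{eq2.9}, it suffices to prescribe the images of these generators in the completed tensor product and to check that the images satisfy the same relations; the presentation then yields a unique algebra homomorphism $\Delta$. I would set $\Delta$ to be primitive on the degree-zero generators, $\Delta(g) = g \otimes 1 + 1 \otimes g$ for $g \in \{x_{i,0}^{\pm}, h_{i,0}\}$, so that $\Delta$ restricts there to the coproduct of $U(\widehat{\mathfrak{sl}}(m|n))$, and I would add correction terms on the degree-one generators $x_{i,1}^{\pm}, h_{i,1}$. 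The correction attached to $h_{i,1}$ is an infinite series running over the positive root vectors of the affine root system; this is exactly why the target cannot be the algebraic tensor product but must be its degreewise completion $Y_{\ve_1,\ve_2}(\widehat{\mathfrak{sl}}(m|n)) \widehat{\otimes} Y_{\ve_1,\ve_2}(\widehat{\mathfrak{sl}}(m|n))$. The first thing to verify is therefore well-definedness: in each fixed degree only finitely many terms of the correction series contribute, so every homogeneous component lands in the algebraic tensor product and the full expression converges in the completion.

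Next I would check that the prescribed images respect every relation \eqref{eq2.1}--\eqref{eq2.9}, organized by type. The relations among degree-zero generators hold automatically because on this part $\Delta$ agrees with the coproduct of $U(\widehat{\mathfrak{sl}}(m|n))$, which is already an algebra homomorphism. The relations that are linear in a single degree-one generator, such as $[h_{i,0}, x_{j,1}^{\pm}]$, $[h_{i,1}, x_{j,0}^{\pm}]$ and $[h_{i,1}, h_{j,0}]$, reduce after expanding $\Delta$ to identities in $U(\widehat{\mathfrak{sl}}(m|n))^{\otimes 2}$ that can be checked term by term; here one must track the Koszul signs produced whenever two odd elements are transposed across the tensor factors. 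The genuinely hard relations are the Serre-type relations \eqref{eq2.9}, and this is where I expect the main obstacle to lie.

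The difficulty with \eqref{eq2.9} is twofold. First, applying $\Delta$ to a Serre expression produces many cross terms mixing the two tensor factors, and in the super setting each cross term carries a sign governed by the parities of the factors, so the bookkeeping is delicate. Second, the correction series interact nontrivially, and one must show that the unwanted cross terms cancel after applying the defining relations of $Y_{\ve_1,\ve_2}(\widehat{\mathfrak{sl}}(m|n))$ inside each factor. I would attack this by separating the nodes: at those where the Serre relation is of ordinary affine type the cancellation can be imported from the non-super affine Yangian computation, while at the odd isotropic nodes, where $(\alpha_i,\alpha_i)=0$, several correction terms simplify or vanish and the quadratic super-Serre relation must be treated directly. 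Throughout, the infinite correction series only needs to be controlled degree by degree, so each identity is a finite verification in a fixed homogeneous component.

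Finally, coassociativity $(\Delta \otimes \id)\circ \Delta = (\id \otimes \Delta)\circ\Delta$ is again checked on generators, since both composites are algebra homomorphisms into the triple degreewise completion. On the degree-zero generators this is the coassociativity of the primitive coproduct, and on the degree-one generators it reduces to comparing the two iterated correction terms; matching their homogeneous components degree by degree, and checking that both sides converge in the triple completion, completes the proof.
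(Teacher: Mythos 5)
Your overall skeleton --- invoking the minimalistic presentation of Theorem~\ref{Mini}, taking $\Delta$ primitive on the degree-zero generators, putting an infinite correction series (summed over positive roots) on the degree-one generators, and working in the degreewise completion --- is exactly the paper's setup. But you have misplaced the difficulty, and in doing so you have skipped the one relation whose verification constitutes almost the entire proof. The Serre-type relations \eqref{eq2.7}, \eqref{eq2.8}, \eqref{eq2.9}, which you single out as ``the genuinely hard relations,'' involve \emph{only} the degree-zero generators $x^{\pm}_{i,0}$; their $\Delta$-images are primitive, so no correction series and hence no cross terms ever enter, and compatibility is immediate from the fact that the restriction of $\Delta$ to the copy of $\widehat{\mathfrak{sl}}(m|n)$ inside the Yangian is the usual coproduct of $U(\widehat{\mathfrak{sl}}(m|n))$. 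The paper dispatches \eqref{eq2.2}, \eqref{eq2.7}, \eqref{eq2.8}, \eqref{eq2.9} in one sentence for precisely this reason.

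The relation you never mention is $[h_{i,1},h_{j,1}]=0$, i.e.\ the case $r=s=1$ of \eqref{eq2.1} (your list only covers $[h_{i,1},h_{j,0}]$), and this is the actual crux: both arguments of the bracket carry the infinite correction term $-(\ve_1+\ve_2)\sum_{\alpha\in\Delta_+}(\alpha,\alpha_i)\,x^{k_\alpha}_{-\alpha}\otimes x^{k_\alpha}_{\alpha}$, and showing that $[\Delta(h_{i,1}),\Delta(h_{j,1})]=0$ is not a degree-by-degree triviality. The paper's proof requires a substantial apparatus with no counterpart in your proposal: the Drinfeld-$J$-type elements $J(h_i)$, $J(x^{\pm}_i)$ and their commutation relations (Lemma~\ref{Lemma39}); the key Proposition~\ref{Prop1}, asserting $(\alpha_j,\alpha)[J(h_i),x_{\alpha}]-(\alpha_i,\alpha)[J(h_j),x_{\alpha}]=c^{\alpha}_{i,j}x_{\alpha}$ for every positive real root $\alpha$, proved by a lengthy case analysis over even and odd roots using the reflection automorphisms $\tau_i$; Corollary~\ref{C1}, which converts this into $[J(h_i),J(h_j)]+[\widetilde{v_i},\widetilde{v_j}]=0$; and finally an explicit cancellation computation with the operators $\Omega_{\pm}$, $\Omega$ in the completed tensor square. (The compatibility of \eqref{eq2.3}, \eqref{eq2.5}, \eqref{eq2.6} is also reduced to the $J$-relations rather than checked ``term by term.'') Without some substitute for this machinery, your plan fails at exactly the relation it silently omits, while the step you budget the most effort for requires none.
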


When $\mathfrak{g}$ is $\mathfrak{sl}(n)$, $Y_h(\mathfrak{sl}(n))$ has an evaluation map $\ev\colon Y_h(\mathfrak{sl}(n))\twoheadrightarrow
U(\mathfrak{sl}(n))$,
which enables us to define actions of $Y_h(\mathfrak{sl}(n))$ on any highest weight representation of $\mathfrak{sl}(n)$.
In \cite{Gu}, Guay showed that the affine Yangian $Y_{\ve_1,\ve_2}(\widehat{\mathfrak{sl}}(n))$ has the evaluation map $\ev\colon Y_{\ve_1,\ve_2}(\widehat{\mathfrak{sl}}(n))\to\widetilde{U}(\widehat{\mathfrak{gl}}(n))$, where $\widetilde{U}(\widehat{\mathfrak{gl}}(n))$ is a completion of the universal enveloping algebra of $\widehat{\mathfrak{gl}}(n)$. The surjectivity of the Guay's evaluation map is
not trivial and was recently shown 
 in \cite{K2}. In the second half of this paper, we construct an evaluation map of the affine super Yangian $Y_{\ve_1,\ve_2}(\widehat{\mathfrak{sl}}(m|n))$ (see Theorem~\ref{thm:main}). 
\begin{Theorem}
Assume $c\hbar = (-m+n) \ve_1$.
Then, there exists a non-trivial algebra homomorphism $\ev \colon Y_{\ve_1,\ve_2}(\widehat{\mathfrak{sl}}(m|n)) \to U(\widehat{\mathfrak{gl}}(m|n))_{{\rm comp},+}$ determined by \eqref{evalu1}-\eqref{evalu4}, where $U(\widehat{\mathfrak{gl}}(m|n))_{{\rm comp},+}$ is a completion of the universal enveloping algebra of $\widehat{\mathfrak{gl}}(m|n)$.
\end{Theorem}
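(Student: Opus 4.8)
The plan is to use the minimalistic presentation of Theorem~\ref{T1} to reduce the problem to checking a finite set of relations, mirroring the strategy of \cite{Gu} in the non-super setting. First I would fix the explicit assignment \eqref{evalu1}-\eqref{evalu4} of the generators $x^{\pm}_{i,r}, h_{i,r}$ ($0\le i\le m+n-1$, $r=0,1$) to elements of $U(\widehat{\mathfrak{gl}}(m|n))_{{\rm comp},+}$. The degree-zero generators are sent to the loop realization of the Chevalley generators of $\widehat{\mathfrak{sl}}(m|n)$ inside $\widehat{\mathfrak{gl}}(m|n)$ (built from the elementary matrices $E_{a,b}\ts t^{k}$ together with the central element), so that on this part $\ev$ is simply the standard embedding $U(\widehat{\mathfrak{sl}}(m|n))\hookrightarrow U(\widehat{\mathfrak{gl}}(m|n))$. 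The degree-one generators are sent to expressions combining an infinite sum linear in the loop generators with a quadratic, normal-ordered correction term; these genuinely require the completion, which is why the target must be $U(\widehat{\mathfrak{gl}}(m|n))_{{\rm comp},+}$ rather than $U(\widehat{\mathfrak{gl}}(m|n))$ itself.

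By Theorem~\ref{T1} it then suffices to verify that the images $\ev(x^{\pm}_{i,r})$ and $\ev(h_{i,r})$ satisfy the defining relations \eqref{eq2.1}-\eqref{eq2.9}, every one of which involves only the generators of degree $r=0,1$; no higher-degree relation has to be checked. The relations among degree-zero generators alone hold automatically, since there $\ev$ is the embedding of $\widehat{\mathfrak{sl}}(m|n)$. The work therefore concentrates on the relations mixing degrees $0$ and $1$: the brackets $[\ev(h_{i,1}),\ev(x^{\pm}_{j,0})]$ and $[\ev(h_{i,0}),\ev(x^{\pm}_{j,1})]$, the deformed commutation relations among the degree-one generators, and the (super) Serre relations. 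I would evaluate each such commutator directly in the completion, keeping careful track of the $\mathbb{Z}/2\mathbb{Z}$-grading signs imposed by the super structure of $\mathfrak{gl}(m|n)$.

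The main obstacle is controlling the infinite sums that occur in the images of the degree-one generators. Expanding the relevant commutators and normal-ordering the quadratic pieces produces anomalous terms proportional to the central element $c$, carrying the supertrace factor $-m+n$ characteristic of $\mathfrak{gl}(m|n)$. These anomalies must cancel against the deformation terms measured by $\ve_1$, and the cancellation takes place exactly under the hypothesis $c\hbar=(-m+n)\ve_1$ — which is the reason for that constraint. Checking the super Serre relations and the quadratic degree-one relations under this specialization is the most delicate step, since it requires matching the coefficients of these central anomalies on both sides. Nontriviality of $\ev$ is then immediate, because its degree-zero part already realizes $\widehat{\mathfrak{sl}}(m|n)$ faithfully.
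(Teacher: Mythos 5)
Your proposal is correct and follows essentially the same route as the paper's proof: reduce via the minimalistic presentation to the finitely many relations \eqref{eq2.1}--\eqref{eq2.9}, note that all purely degree-zero relations (in particular the super Serre relations \eqref{eq2.7}--\eqref{eq2.9}) hold since $\ev$ restricts to the standard embedding of $\widehat{\mathfrak{sl}}(m|n)$, and verify the remaining relations involving degree-one generators by direct computation in the completion, the hypothesis $c\hbar=(-m+n)\ve_1$ being forced by the central terms that appear when the infinite quadratic sums are commuted and normal-ordered. Two inessential differences: in the minimalistic presentation the Serre relations involve only degree-zero generators, so they are automatic rather than the delicate step you anticipate (the real work lies in $[\ev(\widetilde{h}_{i,1}),\ev(x^{+}_{j,0})]$, the deformed degree-one relation \eqref{eq2.6}, and $[\ev(h_{i,1}),\ev(h_{j,1})]=0$), and the paper halves the computation by using the anti-automorphism $\omega(X\otimes t^{r})={(-1)}^{r}X^{T}\otimes t^{r}$ to deduce the relations for the $x^{-}$ generators from those for $x^{+}$.
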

This paper is organized as follows. In Section 2, we recall the definition of the Lie superalgebras $\mathfrak{sl}(m|n)$ and $\widehat{\mathfrak{sl}}(m|n)$. In Section 3, we define the affine super Yangian of type A and give the minimalistic presentation. Note that the Yangian for finite dimensional Lie superalgebra is defined for only type $A$ in the literature. In Section 4, we define its coproduct. Finally, we give the evaluation map for the affine super Yangian in Section 5.
\section*{Acknowledgement}
The author wishes to express his gratitude to his supervisor Tomoyuki Arakawa for suggesting lots of advice to improve this paper. The author is also grateful for the support and encouragement of Junichi Matsuzawa. This work was supported by Iwadare Scholarship and JSPS KAKENHI, Grant-in-Aid for JSPS Fellows, Grant Number JP20J12072.
\section{Preliminaries}
In this section, we recall the definition and presentation of the Lie superalgebra $\mathfrak{\widehat{sl}}(m|n)$ (see \cite{Kac1}). First, we recall the definition of $\mathfrak{sl}(m|n)$ and $\mathfrak{gl}(m|n)$.
\begin{Definition}
Let us set $M_{k,l}(\mathbb{C})$ as the set of $k\times l$ matrices over $\mathbb{C}$. We define the Lie superalgebras $\mathfrak{sl}(m|n)$ and $\mathfrak{gl}(m|n)$ as follows;
\begin{gather*}
\mathfrak{gl}(m|n)=\left\{\begin{pmatrix}
A&B\\
C&D
\end{pmatrix} \relmiddle| A \in M_{m,m}(\mathbb{C}), B \in M_{m,n}(\mathbb{C}), C \in M_{n,m}(\mathbb{C}),\text{ and }D \in M_{n,n}(\mathbb{C})\right\},\\
\mathfrak{sl}(m|n)=\left\{\begin{pmatrix}
A&B\\
C&D
\end{pmatrix}\in\mathfrak{gl}(m|n) \relmiddle| tr(A)-tr(D)=0\right\},
\end{gather*}
where we define $\left[\begin{pmatrix}
A&B\\
C&D
\end{pmatrix},\begin{pmatrix}
E&F\\
G&H
\end{pmatrix}\right]$ as
\begin{gather*}
\left[\begin{pmatrix}
A&B\\
C&D
\end{pmatrix},\begin{pmatrix}
E&F\\
G&H
\end{pmatrix}\right]=\begin{pmatrix}
AE-EA+(BG+FC)&AF+BH-(EB+FD)\\
CE+DG-(GA+HC)&DH-HD+(CF+GB)
\end{pmatrix}.
\end{gather*}
\end{Definition}
As with $\mathfrak{sl}(m)$, $\mathfrak{sl}(m|n)$ has a presentation whose generators are Chevalley generators (see \cite{Sc} and \cite{GL}).
\begin{Proposition}\label{Prop1.1}
We set $p\colon\{1,\cdots,m+n\}\to\{0,1\}$ as
\begin{gather*}
p(i)=\begin{cases}
0&(1\leq i\leq m),\\
1&(m+1\leq i\leq m+n).
\end{cases}
\end{gather*}
Suppose that $m,n\ge2, m\neq n$ and $A=(a_{i,j})_{1\leq i,j\leq m+n-1}$ is an $(m+n-1) \times (m+n-1)$ matrix whose components are
\begin{gather*}
a_{i,j} =
	\begin{cases}
		{(-1)}^{p(i)}+{(-1)}^{p(i+1)}  &\text{if } i=j, \\
	         -{(-1)}^{p(i+1)}&\text{if }j=i+1,\\
	         -{(-1)}^{p(i)}&\text{if }j=i-1,\\
		0  &\text{otherwise.}
	\end{cases}
\end{gather*}
Then, $\mathfrak{sl}(m|n)$ is isomorphic to the Lie superalgebra over $\mathbb{C}$ defined by the generators $\{x^\pm_i,h_i\mid1\leq i\leq m+n-1\}$ and by the relations
\begin{gather*}
[h_i,h_j]=0,\qquad[h_i,x^\pm_j]=\pm a_{i,j}x^\pm_j,\qquad[x^+_i,x^-_j]=\delta_{i,j}h_i,\qquad
{\ad(x^\pm_i)}^{1+\mid a_{i,j}\mid}x^\pm_j=0,\\
[x^\pm_m, x^\pm_m]=0,\qquad[[x^\pm_{m-1},x^\pm_m],[x^\pm_{m+1},x^\pm_m]]=0,
\end{gather*}
where the generators $x^\pm_{m}$ are odd and all other generators are even.

The isomorphism $\Psi$ is given by
\begin{gather*}
\Psi(h_i)={(-1)}^{p(i)}E_{ii}-{(-1)}^{p(i+1)}E_{i+1,i+1},\quad
\Psi(x^+_i)=E_{i,i+1},\quad\Psi(x^-_i)={(-1)}^{p(i)}E_{i+1,i}.
\end{gather*}
\end{Proposition}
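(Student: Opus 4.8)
The plan is to show that the assignment $\Psi$ on generators extends to an isomorphism of Lie superalgebras from the abstractly presented superalgebra, call it $\widetilde{\mathfrak{g}}$, onto $\mathfrak{sl}(m|n)$. I would organize the argument in three stages: first that $\Psi$ is a well-defined homomorphism, then surjectivity, and finally injectivity, which carries the real content. The hypotheses $m,n\ge 2$ and $m\neq n$ enter at the outset: $m,n\ge 2$ guarantees that the odd node $m$ has both neighbors $m-1$ and $m+1$ present, so the quartic relation $[[x^\pm_{m-1},x^\pm_m],[x^\pm_{m+1},x^\pm_m]]=0$ is meaningful, while $m\neq n$ ensures the identity matrix does not lie in $\mathfrak{sl}(m|n)$ (since its supertrace defect is $m-n$), so that there is no extra central generator to spoil the Chevalley presentation.

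First, to check $\Psi$ is well-defined I would substitute the proposed images into each defining relation and verify it inside $\mathfrak{gl}(m|n)$ via the bracket of elementary matrices $E_{ij}$, tracking parities throughout: with $p(m)=0$ and $p(m+1)=1$ the images $\Psi(x^\pm_m)=E_{m,m+1}$ and $E_{m+1,m}$ are odd while all remaining generator-images are even, matching the stated $\mathbb{Z}_2$-grading. The relations $[h_i,h_j]=0$, $[h_i,x^\pm_j]=\pm a_{i,j}x^\pm_j$, and $[x^+_i,x^-_j]=\delta_{i,j}h_i$ are then direct computations. The crucial feature is that $a_{m,m}={(-1)}^{p(m)}+{(-1)}^{p(m+1)}=0$, so the $m$-th simple root is isotropic and odd; this is exactly why the ordinary Serre relation degenerates and is replaced by $[x^\pm_m,x^\pm_m]=0$ (equivalently $E_{m,m+1}^2=0$) together with the quartic relation, both of which are verified by short matrix computations. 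This shows $\Psi$ descends to a homomorphism $\widetilde{\mathfrak{g}}\to\mathfrak{sl}(m|n)$.

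Second, surjectivity is standard: the images $E_{i,i+1}$ and $E_{i+1,i}$ generate every off-diagonal elementary matrix through iterated brackets such as $[E_{i,i+1},E_{i+1,i+2}]=E_{i,i+2}$, while the $\Psi(h_i)$ span the trace-defect-zero diagonal subalgebra, so the image is all of $\mathfrak{sl}(m|n)$. Third, and this is where the essential work lies, I would establish injectivity. The abstract superalgebra $\widetilde{\mathfrak{g}}$ inherits a triangular decomposition $\widetilde{\mathfrak{g}}=\widetilde{\mathfrak{n}}^-\oplus\mathfrak{h}\oplus\widetilde{\mathfrak{n}}^+$, with $\widetilde{\mathfrak{n}}^\pm$ generated by the $x^\pm_i$ and $\mathfrak{h}$ spanned by the $h_i$; since $\Psi$ respects this decomposition and is already injective on $\mathfrak{h}$, it suffices to prove $\dim\widetilde{\mathfrak{n}}^+\le\dim\mathfrak{n}^+$, the upper-triangular nilradical of $\mathfrak{sl}(m|n)$, after which surjectivity forces equality and hence injectivity. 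To bound $\dim\widetilde{\mathfrak{n}}^+$ I would place $\widetilde{\mathfrak{g}}$ in the framework of contragredient Lie superalgebras attached to the symmetrizable matrix $A$ and invoke the super analogue of the Gabber--Kac theorem: modulo the Serre-type relations the positive part is spanned by admissible iterated brackets of the $x^+_i$, and the quartic relation at the isotropic node $m$ eliminates precisely the surplus vectors that the quadratic relations alone leave behind.

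The main obstacle is this last injectivity step, specifically verifying that the listed relations are \emph{complete} at the isotropic odd node. Concretely, one must produce a spanning set of $\widetilde{\mathfrak{n}}^+$ indexed by the positive roots of $\mathfrak{sl}(m|n)$ and show it has the correct cardinality, so that no further relations beyond those displayed are required; this requires careful combinatorial bookkeeping of the root spaces and a demonstration that the quartic relation cuts $\widetilde{\mathfrak{n}}^+$ down by exactly the right amount. This is the content secured for the distinguished Borel in \cite{Sc} and \cite{GL}, which I would cite to complete the dimension count.
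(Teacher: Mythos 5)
The paper gives no proof of this proposition at all: it is quoted as a known result, with the entire content deferred to the cited references \cite{Sc} and \cite{GL}. Your outline (checking $\Psi$ respects the relations, surjectivity via brackets of elementary matrices, injectivity via the triangular decomposition and a dimension count on $\widetilde{\mathfrak{n}}^{\pm}$) is the standard argument, and the one genuinely hard step you isolate --- completeness of the relations at the isotropic odd node $m$, where the naive Serre presentation fails and the quartic relation is needed --- is resolved by appealing to exactly the same references the paper cites, so your proposal is correct and consistent with the paper's treatment.
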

Next, we recall the definition of the affinization of $\mathfrak{sl}(m|n)$ and $\mathfrak{gl}(m|n)$ (see \cite{M}). Lie superalgebra $\mathfrak{sl}(m|n)$ has a non-degenerate invariant bilinear form $\kappa:\mathfrak{g}\otimes\mathfrak{g}\to\mathbb{C}$. The bilinear form is uniquely determined up to the scalar multiple, so we fix it.
\begin{Definition}
Suppose that $\mathfrak{g}$ is $\mathfrak{sl}(m|n)$ or $\mathfrak{gl}(m|n)$. Then, we set a Lie superalgebra $\widetilde{\mathfrak{g}}$ as $\mathfrak{g}\otimes\mathbb{C}[t^{\pm1}]\oplus\mathbb{C}c\oplus\mathbb{C}d$ whose commutator relations are following;
\begin{gather*}
[a\otimes t^s, b\otimes t^u]=[a,b]\otimes t^{s+u}+s\delta_{s+u, 0}\kappa(a,b)c,\\
\text{$c$ is a central element of }\widetilde{\mathfrak{g}},\\
[d, a\otimes t^s]=sa\otimes t^{s}.
\end{gather*}
We also set a subalgebra $\widehat{\mathfrak{g}}\subset\widetilde{\mathfrak{g}}$ as $\mathfrak{g}\otimes\mathbb{C}[t^{\pm1}]\oplus\mathbb{C}c$.
\end{Definition}
We have another presentation of $\widehat{\mathfrak{sl}}(m|n)$ (see \cite{Y}).
\begin{Proposition}\label{Prop2}
Suppose that $m,n\geq2,m\neq n$ and $A=(a_{i,j})_{0\leq i,j\leq m+n-1}$ is a $(m+n) \times (m+n)$ matrix whose components are
\begin{gather*}
a_{i,j} =
	\begin{cases}
	{(-1)}^{p(i)}+{(-1)}^{p(i+1)}  &\text{if } i=j, \\
	         -{(-1)}^{p(i+1)}&\text{if }j=i+1,\\
	         -{(-1)}^{p(i)}&\text{if }j=i-1,\\
	        1 &\text{if }(i,j)=(0,m+n-1),(m+n-1,0),\\
		0  &\text{otherwise,}
	\end{cases}
\end{gather*}
Then, $\widetilde{\mathfrak{sl}}(m|n)$ is isomorphic to the Lie superalgebra  over $\mathbb{C}$ defined by the generators $\{x^\pm_i, h_i, d\mid0\leq i\leq m+n-1\}$ and by the relations
\begin{gather}
[d, h_i]=0,\quad
[d, x^+_i]=\begin{cases}
x^+_i&(i=0),\\
0&(\text{otherwise}),
\end{cases}\quad
[d, x^-_i]=\begin{cases}
-x^-_i&(i=0),\\
0&(\text{otherwise}),
\end{cases}\label{eat1}\\
[h_i,h_j]=0,\quad
[h_i,x^\pm_j]=\pm a_{i,j}x^\pm_j,\quad
[x^+_i,x^-_j]=\delta_{i,j}h_i,\quad
{\ad(x^\pm_i)}^{1+\mid a_{i,j}\mid}x^\pm_j=0,\label{eat2}\\
[x^\pm_0,x^\pm_0]=0,\quad[x^\pm_m,x^\pm_m]=0,\label{eat2.5}\\
[[x^\pm_{m-1},x^\pm_m],[x^\pm_{m+1},x^\pm_m]]=0,\quad[[x^\pm_{m+n-1},x^\pm_0],[x^\pm_{1},x^\pm_0]]=0,\label{eat3}
\end{gather}
where the generators $x^\pm_{m}$ and $x^\pm_{0}$ are odd and all other generators are even.

The isomorphism $\Xi$ is given by
\begin{gather*}
\Xi(h_i)=\begin{cases}
-E_{1,1}-E_{m+n,m+n}+c&(i=0),\\
{(-1)}^{p(i)}E_{ii}-{(-1)}^{p(i+1)}E_{i+1,i+1}&(1\leq i\leq m+n-1),
\end{cases}\\
\Xi(x^+_i)=\begin{cases}
E_{m+n,1}\otimes t&(i=0),\\
E_{i,i+1}&(\text{otherwise}),
\end{cases}
\quad\Xi(x^-_i)=\begin{cases}
-E_{1,m+n}\otimes t^{-1}&(i=0),\\
{(-1)}^{p(i)}E_{i+1,i}&(\text{otherwise}).
\end{cases}
\end{gather*}
Moreover, $\widehat{\mathfrak{sl}}(m|n)$ is isomorphic to the Lie superalgebra  over $\mathbb{C}$ defined by the generators $\{x^\pm_i, h_i\mid0\leq i\leq m+n-1\}$ and by the relations \eqref{eat2}-\eqref{eat3}.
\end{Proposition}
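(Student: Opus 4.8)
The plan is to regard the right-hand side of the claimed isomorphism as the Lie superalgebra $\mathfrak{a}$ abstractly presented by the generators $\{x^\pm_i, h_i, d\mid 0\le i\le m+n-1\}$ subject to \eqref{eat1}--\eqref{eat3}, to promote the assignment $\Xi$ to a homomorphism $\Xi\colon\mathfrak{a}\to\widetilde{\mathfrak{sl}}(m|n)$ by checking that it preserves all defining relations, and then to prove that $\Xi$ is bijective. Surjectivity is elementary; injectivity is the crux and is where I would invoke the completeness of the Serre-type relations for affine superalgebras of type $A$.

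First I would verify that the elements $\Xi(x^\pm_i),\Xi(h_i),\Xi(d)\in\widetilde{\mathfrak{sl}}(m|n)$ satisfy \eqref{eat1}--\eqref{eat3}; by the universal property of a presentation this produces the homomorphism $\Xi$. Most relations reduce to routine matrix identities in $\mathfrak{gl}(m|n)$ together with the observation that only $x^\pm_0$ carries a nontrivial power of $t$, which immediately gives \eqref{eat1}. The three points requiring care are: the weight relations $[h_0,x^\pm_j]=\pm a_{0,j}x^\pm_j$, which encode the affine row of $A$ (in particular the entries $a_{0,1}=-1$ and $a_{0,m+n-1}=1$) and follow by bracketing $\Xi(h_0)=-E_{1,1}-E_{m+n,m+n}+c$ against the neighbouring simple root vectors; the Cartan relation $[x^+_0,x^-_0]=h_0$, where one computes
\begin{equation*}
[E_{m+n,1}\otimes t,\,-E_{1,m+n}\otimes t^{-1}]=-(E_{1,1}+E_{m+n,m+n})+\kappa(E_{m+n,1},-E_{1,m+n})\,c,
\end{equation*}
so that the normalization of $\kappa$ is pinned down by demanding the central term equal $c$, the cocycle in the definition of $\widetilde{\mathfrak{g}}$ being exactly what supplies the affine central direction; and the super relations \eqref{eat2.5}, \eqref{eat3}, which hold because $\Xi(x^\pm_0)$ and $\Xi(x^\pm_m)$ are odd isotropic root vectors whose self-brackets and the indicated nested brackets vanish in $\mathfrak{gl}(m|n)$.

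Next I would establish surjectivity. By Proposition~\ref{Prop1.1} the images $\Xi(x^\pm_i),\Xi(h_i)$ for $1\le i\le m+n-1$ generate $\mathfrak{sl}(m|n)\otimes 1$. Adjoining $\Xi(x^+_0)=E_{m+n,1}\otimes t$ and $\Xi(x^-_0)=-E_{1,m+n}\otimes t^{-1}$ and bracketing repeatedly against $\mathfrak{sl}(m|n)\otimes 1$ yields $\mathfrak{sl}(m|n)\otimes t^{\pm1}$, after which an induction on the $t$-degree produces all of $\mathfrak{sl}(m|n)\otimes\mathbb{C}[t^{\pm1}]$; the central element $c$ is recovered from $\Xi(h_0)$, and $d$ lies in the image by construction. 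Hence $\Xi(\mathfrak{a})=\widetilde{\mathfrak{sl}}(m|n)$.

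The main obstacle is injectivity, i.e.\ showing that \eqref{eat1}--\eqref{eat3} already cut $\mathfrak{a}$ down to the correct size rather than leaving it strictly larger than $\widetilde{\mathfrak{sl}}(m|n)$. In the super setting the ordinary Serre relations are insufficient, and the extra relations \eqref{eat2.5} and \eqref{eat3} are precisely the additional generalized Serre relations attached to the odd isotropic nodes $0$ and $m$. I would identify $A$ with the affine Cartan matrix of type $A(m-1,n-1)^{(1)}$ and appeal to Yamane's classification of the defining relations of affine Lie superalgebras of type $A$ (\cite{Y}): the relations \eqref{eat2}--\eqref{eat3} coincide with that complete list, so the $d$-graded components of $\mathfrak{a}$ cannot exceed the dimensions of those of $\widetilde{\mathfrak{sl}}(m|n)$, and combined with surjectivity this forces $\Xi$ to be an isomorphism. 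Finally, for the ``Moreover'' assertion one restricts to the subalgebra of $\mathfrak{a}$ generated by $\{x^\pm_i,h_i\}$ (equivalently, drops the generator $d$ and relation \eqref{eat1}); under $\Xi$ this maps isomorphically onto $\mathfrak{sl}(m|n)\otimes\mathbb{C}[t^{\pm1}]\oplus\mathbb{C}c=\widehat{\mathfrak{sl}}(m|n)$, yielding the second presentation.
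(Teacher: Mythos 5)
The paper gives no proof of this proposition at all: it is recalled as a known presentation of $\widetilde{\mathfrak{sl}}(m|n)$ with a citation to Yamane \cite{Y}. Your proposal is a correct reconstruction of the standard argument behind such a citation, and its only non-routine step --- injectivity, i.e.\ that \eqref{eat2}--\eqref{eat3} form a \emph{complete} set of relations for the type $A$ affine superalgebra --- is exactly the step you delegate back to Yamane's theorem; so in substance your proof and the paper rest on the same external result, with you supplying the routine scaffolding the paper omits. That scaffolding is sound: the verification that $\Xi$ preserves the relations (in particular the central-term computation $[E_{m+n,1}\otimes t,\,-E_{1,m+n}\otimes t^{-1}]=-(E_{1,1}+E_{m+n,m+n})+\kappa(E_{m+n,1},-E_{1,m+n})c$, where the supertrace normalization gives $\kappa(E_{m+n,1},-E_{1,m+n})=-\operatorname{str}(E_{m+n,m+n})=1$, hence the bracket equals $\Xi(h_0)$), and the surjectivity argument, including the recovery of $c=\Xi(h_0)+E_{1,1}+E_{m+n,m+n}$, which works because $E_{1,1}+E_{m+n,m+n}$ has vanishing supertrace difference and so lies in $\mathfrak{sl}(m|n)$. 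One caveat: in the ``Moreover'' part, your parenthetical ``equivalently, drops the generator $d$ and relation \eqref{eat1}'' glosses over a genuine point --- it is not automatic that the algebra presented by \eqref{eat2}--\eqref{eat3} alone coincides with the subalgebra generated by $\{x^\pm_i,h_i\}$ inside $\mathfrak{a}$, since a priori the $d$-less presented algebra could be strictly larger (the derivation is what rigidifies the root-space decomposition in the Gabber--Kac type argument). This second presentation must itself be taken from Yamane's result (which does cover the derived-algebra version), not deduced formally from the first; with that understanding your outline is correct.
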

Finally, we set some notations.
Let us set $\{\alpha_i\}_{0\leq i\leq m+n-1}$ as a set of simple roots of $\widetilde{\mathfrak{sl}}(m|n)$ and $\delta$ as a positive root $\sum_{0\leq i\leq m+n-1}\limits \alpha_i$. 
Moreover, we set $\Delta$ (resp. $\Delta_+$) as a set of roots (resp. positive roots) of $\widetilde{\mathfrak{sl}}(m|n)$. We denote the parity of $E_{i,j}$ as $p(E_{i,j})$. Obviously, $p(E_{i,j})$ is equal to $p(i)+p(j)$. We also set $\Delta_+^{\re}$ and $\Delta^{\re}$ as $\Delta_+\setminus\mathbb{Z}_{>0}\delta$ and $\Delta\setminus\mathbb{Z}\delta$. We also take an inner product on $\bigoplus_{0\leq i\leq m+n-1}\limits\mathbb{C}\alpha_i$ determined by $(\alpha_i,\alpha_j)=a_{i,j}$. Assume that $\mathfrak{g}=\widetilde{\mathfrak{sl}}(m|n)$ and let $\mathfrak{g}_\alpha$ be the root $\alpha$ space of $\mathfrak{g}$. We set $\{x^{k_\alpha}_{\alpha}\}_{1\leq k_\alpha\leq\text{dim}\mathfrak{g}_\alpha}$ as a basis of $\mathfrak{g}_\alpha$ which satisfies $\kappa(x^{p}_\alpha,x^{q}_{-\alpha})=\delta_{p,q}$ for all $\alpha\in\Delta_+$. We also denote the parity of $x^{k_\alpha}_{\alpha}$ by $p(\alpha)$. Moreover, we sometimes identify $\{0,\cdots, m+n-1\}$ with $\mathbb{Z}/(m+n)\mathbb{Z}$ and denote it by $I$.

\section{The minimalistic presentation of the Affine Super Yangian}
First, we define the affine super Yangian $Y_{\ve_1,\ve_2}(\widehat{\mathfrak{sl}}(m|n))$. This definition is a generalization of  Stukopin's super Yangian (\cite{S}). Let us set $\{x,y\}$ as $xy+yx$.
\begin{Definition}\label{Def}
Suppose that $m, n\geq2$ and $m\neq n$. The affine super Yangian $Y_{\ve_1,\ve_2}(\widehat{\mathfrak{sl}}(m|n))$ is the associative super algebra over $\mathbb{C}$ generated by $x_{i,r}^{+}, x_{i,r}^{-}, h_{i,r}$ $(i \in \{0,1,\cdots,m+n-1\}, r \in \mathbb{Z}_{\geq 0})$ with parameters $\ve_1, \ve_2 \in \mathbb{C}$ subject to the defining relations:
\begin{gather}
	[h_{i,r}, h_{j,s}] = 0, \label{eq1.1}\\
	[x_{i,r}^{+}, x_{j,s}^{-}] = \delta_{i,j} h_{i, r+s}, \label{eq1.2}\\
	[h_{i,0}, x_{j,r}^{\pm}] = \pm a_{i,j} x_{j,r}^{\pm},\label{eq1.3}\\
	[h_{i, r+1}, x_{j, s}^{\pm}] - [h_{i, r}, x_{j, s+1}^{\pm}] 
	= \pm a_{i,j} \dfrac{\varepsilon_1 + \varepsilon_2}{2} \{h_{i, r}, x_{j, s}^{\pm}\} 
	- b_{i,j} \dfrac{\varepsilon_1 - \varepsilon_2}{2} [h_{i, r}, x_{j, s}^{\pm}],\label{eq1.4}\\
	[x_{i, r+1}^{\pm}, x_{j, s}^{\pm}] - [x_{i, r}^{\pm}, x_{j, s+1}^{\pm}] 
	= \pm a_{i,j}\dfrac{\varepsilon_1 + \varepsilon_2}{2} \{x_{i, r}^{\pm}, x_{j, s}^{\pm}\} 
	- b_{i,j} \dfrac{\varepsilon_1 - \varepsilon_2}{2} [x_{i, r}^{\pm}, x_{j, s}^{\pm}],\label{eq1.5}\\
	\sum_{w \in \mathfrak{S}_{1 + |a_{i,j}|}}[x_{i,r_{w(1)}}^{\pm}, [x_{i,r_{w(2)}}^{\pm}, \dots, [x_{i,r_{w(1 + |a_{i,j}|)}}^{\pm}, x_{j,s}^{\pm}]\dots]] = 0\ (i \neq j),\label{eq1.6}\\
	[x^\pm_{i,r},x^\pm_{i,s}]=0\ (i=0, m),\label{eq1.7}\\
	[[x^\pm_{i-1,r},x^\pm_{i,0}],[x^\pm_{i,0},x^\pm_{i+1,s}]]=0\ (i=0, m),\label{eq1.8}
\end{gather}
where\begin{gather*}
a_{i,j} =
	\begin{cases}
	{(-1)}^{p(i)}+{(-1)}^{p(i+1)}  &\text{if } i=j, \\
	         -{(-1)}^{p(i+1)}&\text{if }j=i+1,\\
	         -{(-1)}^{p(i)}&\text{if }j=i-1,\\
	        1 &\text{if }(i,j)=(0,m+n-1),(m+n-1,0),\\
		0  &\text{otherwise,}
	\end{cases}\\
	 b_{i,j}=
	\begin{cases}
	-{(-1)}^{p(i+1)} &\text{if } i=j - 1,\\
		{(-1)}^{p(i)} &\text{if } i=j + 1,\\
	        -1 &\text{if }(i,j)=(0,m+n-1),\\
	        1 &\text{if }(i,j)=(m+n-1,0),\\
		0  &\text{otherwise,}
	\end{cases}
\end{gather*}
and the generators $x^\pm_{m, r}$ and $x^\pm_{0, r}$ are odd and all other generators are even.
\end{Definition}
\begin{Remark}
In this paper, we set $[x,y]$ as $xy-{(-1)}^{p(x)p(y)}yx$ for all homogeneous elements $x,y$. Thus, \eqref{eq1.7} is non-trivial.
\end{Remark}
We also define the affine super Yangian associated with $\widetilde{\mathfrak{sl}}(m|n)$.
\begin{Definition}
Suppose that $m, n\geq2$ and $m\neq n$. We define $Y_{\ve_1,\ve_2}(\widetilde{\mathfrak{sl}}(m|n))$ is the associative super algebra over $\mathbb{C}$ generated by $\{x_{i,r}^{\pm}, h_{i,r},d\mid i \in \{0,1,\cdots,m+n-1\}, r \in \mathbb{Z}_{\geq 0}\}$ with parameters $\ve_1, \ve_2 \in \mathbb{C}$ subject to the defining relations \eqref{eq1.1}-\eqref{eq1.8} and
\begin{gather}
[d,h_{i,r}]=0,\quad[d,x^+_{i,r}]=\begin{cases}
1&\text{ if }i=0,\\
0&\text{ if }i\neq0,
\end{cases}\quad[d,x^-_{i,r}]=\begin{cases}
-1&\text{ if }i=0,\\
0&\text{ if }i\neq0,
\end{cases}\label{eq1.9}
\end{gather}
where the generators $x^\pm_{m, r}$ and $x^\pm_{0, r}$ are odd and all other generators are even.
\end{Definition}
One of the difficulty of Definition~\ref{Def} is that the number of generators is infinite. The rest of this section, we construct a new presentation of the affine super Yangian such that the number of generators are finite. 

Let us set $\widetilde{h}_{i,1} = {h}_{i,1} - \dfrac{\ve_1 + \ve_2}{2} h_{i,0}^2$. By the definition of $\widetilde{h}_{i,1}$, we can rewrite \eqref{eq1.4} as
\begin{equation}
[\widetilde{h}_{i,1}, x_{j,r}^{\pm}] = \pm a_{i,j}\left(x_{j,r+1}^{\pm}-b_{i,j}\dfrac{\varepsilon_1 - \varepsilon_2}{2} x_{j, r}^{\pm}\right).\label{11111}
\end{equation}
By \eqref{11111}, we find that $Y_{\ve_1,\ve_2}(\widehat{\mathfrak{sl}}(m|n))$ is generated by $x_{i,r}^{+}, x_{i,r}^{-}, h_{i,r}$ $(i \in\{0,1,\cdots,m+n-1\}, r = 0,1)$. In fact, by \eqref{11111} and \eqref{eq1.2}, we have the following relations;
\begin{gather}
x^\pm_{i,r+1}=\pm\dfrac{1}{a_{i,i}}[\widetilde{h}_{i,1},x^\pm_{i,r}],\quad h_{i,r+1}=[x^+_{i,r+1},x^-_{i,0}]\ \text{if}\  i\neq m,0,\label{eq1297}\\
x^\pm_{i,r+1}=\pm\dfrac{1}{a_{i+1,i}}[\widetilde{h}_{i+1,1},x^\pm_{i,r}]+b_{i+1,i}\dfrac{\varepsilon_1 - \varepsilon_2}{2} x_{i, r}^{\pm},\quad h_{i,r+1}=[x^+_{i,r+1},x^-_{i,0}]\ \text{if}\  i=m,0\label{eq1298}
\end{gather}
for all $r\geq1$. In the following theorem, we construct the minimalistic presentation of the affine super Yangian $Y_{\ve_1,\ve_2}(\widehat{\mathfrak{sl}}(m|n))$ whose generators are $x_{i,r}^{+}, x_{i,r}^{-}, h_{i,r}$ $(i \in \{0,1,\cdots,m+n-1\}, r = 0,1)$. We remark that we have not checked that the presentation is minimalistic yet. However, we call this presentation ``minimalistic presentation'' since, in the non-super case, the corresponding presentation is called ``minimalistic presentation''.
\begin{Theorem}\label{Mini}
Suppose that $m, n\geq2$ and $m\neq n$. The affine super Yangian $Y_{\ve_1,\ve_2}(\widehat{\mathfrak{sl}}(m|n))$ is isomorphic to the associative super algebra generated by $x_{i,r}^{+}, x_{i,r}^{-}, h_{i,r}$ $(i \in \{0,1,\cdots,m+n-1\}, r = 0,1)$ subject to the defining relations:
\begin{gather}
[h_{i,r}, h_{j,s}] = 0,\label{eq2.1}\\
[x_{i,0}^{+}, x_{j,0}^{-}] = \delta_{i,j} h_{i, 0},\label{eq2.2}\\
[x_{i,1}^{+}, x_{j,0}^{-}] = \delta_{i,j} h_{i, 1} = [x_{i,0}^{+}, x_{j,1}^{-}],\label{eq2.3}\\
[h_{i,0}, x_{j,r}^{\pm}] = \pm a_{i,j} x_{j,r}^{\pm},\label{eq2.4}\\
[\widetilde{h}_{i,1}, x_{j,0}^{\pm}] = \pm a_{i,j}\left(x_{j,1}^{\pm}-b_{i,j}\dfrac{\varepsilon_1 - \varepsilon_2}{2} x_{j, 0}^{\pm}\right),\label{eq2.5}\\
[x_{i, 1}^{\pm}, x_{j, 0}^{\pm}] - [x_{i, 0}^{\pm}, x_{j, 1}^{\pm}] = \pm a_{i,j}\dfrac{\varepsilon_1 + \varepsilon_2}{2} \{x_{i, 0}^{\pm}, x_{j, 0}^{\pm}\} - b_{i,j} \dfrac{\varepsilon_1 - \varepsilon_2}{2} [x_{i, 0}^{\pm}, x_{j, 0}^{\pm}],\label{eq2.6}\\
(\ad x_{i,0}^{\pm})^{1+|a_{i,j}|} (x_{j,0}^{\pm})= 0 \ (i \neq j), \label{eq2.7}\\
[x^\pm_{i,0},x^\pm_{i,0}]=0\ (i=0, m),\label{eq2.8}\\
	[[x^\pm_{i-1,0},x^\pm_{i,0}],[x^\pm_{i,0},x^\pm_{i+1,0}]]=0\ (i=0, m),\label{eq2.9}
\end{gather}
where the generators $x^\pm_{m, r}$ and $x^\pm_{0, r}$ are odd and all other generators are even.
\end{Theorem}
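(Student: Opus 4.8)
The plan is to construct explicit algebra homomorphisms in both directions between $Y_{\ve_1,\ve_2}(\widehat{\mathfrak{sl}}(m|n))$ and the superalgebra $Y'$ presented by \eqref{eq2.1}--\eqref{eq2.9}, and to show that they are mutually inverse. One direction is immediate: every defining relation \eqref{eq2.1}--\eqref{eq2.9} of $Y'$ is literally a special case (at levels $r,s\in\{0,1\}$) of the relations \eqref{eq1.1}--\eqref{eq1.8}, once one recalls that \eqref{eq2.5} is exactly \eqref{11111} at $r=0$ and that \eqref{11111} is equivalent to \eqref{eq1.4}. Hence sending each minimalistic generator to the element of the same name defines a homomorphism $\Phi\colon Y'\to Y_{\ve_1,\ve_2}(\widehat{\mathfrak{sl}}(m|n))$. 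The substance of the theorem is to produce an inverse going the other way.

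To build the inverse $\Psi$, I would, inside $Y'$, \emph{define} the higher generators $x^\pm_{i,r}$ and $h_{i,r}$ for all $r\ge 2$ by the recursions \eqref{eq1297} and \eqref{eq1298} (with $\widetilde h_{i,1}:=h_{i,1}-\tfrac{\ve_1+\ve_2}{2}h_{i,0}^2$), and then verify that these elements, together with the original generators, satisfy all of \eqref{eq1.1}--\eqref{eq1.8}. Once this is done, the universal property of $Y_{\ve_1,\ve_2}(\widehat{\mathfrak{sl}}(m|n))$ supplies $\Psi$, and checking $\Phi\circ\Psi=\id$ and $\Psi\circ\Phi=\id$ is routine since both composites fix the $r\in\{0,1\}$ generators. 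The engine of the verification is the operator $\ad(\widetilde h_{i,1})$, which raises the level by one. First I would note that the $\widetilde h_{i,1}$ pairwise supercommute (immediate from \eqref{eq2.1} and the definition of $\widetilde h$), and then prove that \eqref{11111} holds for \emph{all} $i,j$ and all $r\ge 0$ by induction on $r$: the base case is \eqref{eq2.5}, and the inductive step rewrites $[\widetilde h_{i,1},x^\pm_{j,r+1}]$ using the recursive definition of $x^\pm_{j,r+1}$, moves $\widetilde h_{i,1}$ past the raising element $\widetilde h_{k,1}$ via the Jacobi identity and supercommutativity, and applies the inductive hypothesis. This step simultaneously shows that the definition of $x^\pm_{j,r}$ is independent of the admissible raising index $k$.

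With \eqref{11111} in hand for all indices, I would propagate the remaining relations by induction on the total level, in the order \eqref{eq1.3}, then the intertwined pair \eqref{eq1.2} and \eqref{eq1.1}, then the same-sign relation \eqref{eq1.5}, and finally the Serre-type relations \eqref{eq1.6}--\eqref{eq1.8}. In each case one brackets a lower-level instance of the relation with a suitable $\widetilde h_{k,1}$ and uses \eqref{11111} to convert the brackets into level shifts, collecting the correction terms proportional to $b_{i,j}\tfrac{\ve_1-\ve_2}{2}$. Throughout, the parities (recall that $x^\pm_{0,r}$ and $x^\pm_{m,r}$ are odd) must be tracked so that the sign $(-1)^{p(\cdot)p(\cdot)}$ in every bracket is correct; in particular \eqref{eq1.7} is a genuine anticommutator relation and is not automatic.

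I expect the main obstacle to be the higher-level Serre relations \eqref{eq1.6} and, especially, the quartic relations \eqref{eq1.8}: raising them requires repeated commutation with $\widetilde h_{k,1}$ together with careful bookkeeping of the accumulated correction terms and super signs, and one must confirm that no new obstruction appears at any level. A second recurring nuisance is the degeneracy $a_{i,i}=0$ at $i=0,m$, which forces the two distinct recursions \eqref{eq1297}/\eqref{eq1298} and means every inductive argument must be run with the auxiliary raising index $k=i+1$ in those cases. Once all of \eqref{eq1.1}--\eqref{eq1.8} are verified, $\Psi$ exists and is inverse to $\Phi$, which completes the isomorphism.
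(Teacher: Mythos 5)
Your outline coincides with the paper's proof of Theorem~\ref{Mini}: one direction is trivial because \eqref{eq2.1}--\eqref{eq2.9} are special cases of \eqref{eq1.1}--\eqref{eq1.8}, and for the converse one defines the higher modes inside the minimalistic algebra by the recursions \eqref{eq1297}--\eqref{eq1298} and verifies all of \eqref{eq1.1}--\eqref{eq1.8} by induction on the level (this is exactly how the paper proceeds, via Lemmas~\ref{Lemma92}--\ref{Lemma33}). However, there is a genuine gap in the engine you propose. You claim that every inductive step consists of bracketing a lower-level instance of a relation with a suitable $\widetilde h_{k,1}$ and invoking \eqref{11111}. This cannot work as stated, because $\ad(\widetilde h_{k,1})$ applied to a known relation only ever produces the \emph{sum} of two adjacent higher-level brackets, never an individual one. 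Take the case you yourself flag as essential, \eqref{eq1.7} at $i=0,m$, at total level $2$: applying $\ad(\widetilde h_{i+1,1})$ to $[x^+_{i,1},x^+_{i,0}]=0$ and to $[x^+_{i,0},x^+_{i,1}]=0$ yields only
\begin{gather*}
[x^+_{i,2},x^+_{i,0}]+[x^+_{i,1},x^+_{i,1}]=0,\qquad
[x^+_{i,1},x^+_{i,1}]+[x^+_{i,0},x^+_{i,2}]=0,
\end{gather*}
and since $x^\pm_{i,r}$ is odd the bracket is supersymmetric, so $[x^+_{i,2},x^+_{i,0}]=[x^+_{i,0},x^+_{i,2}]$ and these two equations collapse to the single underdetermined relation $[x^+_{i,2},x^+_{i,0}]=-[x^+_{i,1},x^+_{i,1}]$. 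No further application of any $\ad(\widetilde h_{k,1})$ separates the terms. The paper escapes this by constructing a \emph{second-order} raising element $\widehat h_{i+1,2}$ (Levendorskii's trick) with $[\widehat h_{i+1,2},x^\pm_{i,0}]=\pm a_{i,i+1}x^\pm_{i,2}$, whose application to \eqref{eq2.8} produces the third, linearly independent relation \eqref{er3}; only the three relations \eqref{er1}, \eqref{er2}, \eqref{er3} together force all brackets to vanish (Lemma~\ref{Lemma29}~(3)), and the same two-operator mechanism (\eqref{etn12} versus \eqref{eqe}) drives the general-level induction in Lemma~\ref{Lemma31}~(1).

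The same phenomenon forces the rest of the machinery that is absent from your plan: the element $\widetilde h_{i,2}=h_{i,2}-h_{i,0}h_{i,1}+\tfrac13 h_{i,0}^3$ and its adjoint action (Lemma~\ref{Lemma29}~(6),(7)); the auxiliary commutation relations $[h_{i,2},h_{j,0}]=0$ and $[h_{i,2},h_{i,1}]=0$, whose super analogue requires a separate sign-twisted argument when $a_{i,i}=-2$ (Lemma~\ref{Lem2}) and a transfer along neighboring nodes to cover all $i$ (Lemma~\ref{Lem45}), and which are the indispensable input for the Levendorskii-type treatment of the non-isotropic nodes in Lemma~\ref{Lemma30}; and, for \eqref{eq1.2} at $i=j=0,m$ and arbitrary level, raising elements $\widetilde h_{i,k}$ of every order $k$ together with the commutation $[\widetilde h_{i+1,1},h_{i,k}]=0$ (the Claims inside the proof of Lemma~\ref{Lemma31}~(2)). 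These are not bookkeeping refinements: because $a_{0,0}=a_{m,m}=0$ one can never raise the level at an isotropic node with its own $\widetilde h_{i,1}$, and the neighboring degree-one operators alone yield underdetermined systems as above, so without the degree-two and higher Cartan elements and the $[h_{\cdot,2},h_{\cdot,1}]=0$ relations your induction does not close.
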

The outline of the proof of Theorem~\ref{Mini} is similar to that of Theorem 2.13 of \cite{GNW}. To simplify the notation, we denote the associtive super algebra defined in Theorem~\ref{Mini} as $\widetilde{Y}_{\ve_1,\ve_2}(\widehat{\mathfrak{sl}}(m|n))$. We construct $x^\pm_{i,r}$ and $h_{i,r}$ as the elements of $\widetilde{Y}_{\ve_1,\ve_2}(\widehat{\mathfrak{sl}}(m|n))$ inductively by \eqref{eq1297} and \eqref{eq1298}. 
Since \eqref{eq2.1}-\eqref{eq2.9} are contained in the defining relations of the affine super Yangian, it is enough to check that the defining relations of the affine super Yangians \eqref{eq1.1}-\eqref{eq1.8} are deduced from \eqref{eq2.1}-\eqref{eq2.9} in $\widetilde{Y}_{\ve_1,\ve_2}(\widehat{\mathfrak{sl}}(m|n))$. The proof of Theorem~\ref{Mini} is divided into eight lemmas, that is, Lemma~\ref{Lemma92}, Lemma~\ref{Lemma29}, Lemma~\ref{Lem2}, Lemma~\ref{Lem45}, Lemma~\ref{Lemma30}, Lemma~\ref{Lemma31}, Lemma~\ref{Lemma32}, and Lemma~\ref{Lemma33}. 

Most of the defining relations \eqref{eq1.1}-\eqref{eq1.8} are obtained in the same way as that of \cite{L} or \cite{GNW}. 
For example, we have the following lemma in a similar way as that of Lemma 2.22 of \cite{GNW}.
\begin{Lemma}\label{Lemma92}
\textup{(1)} The defining relation \eqref{eq1.3} holds for all $i, j\in I$ in $\widetilde{Y}_{\ve_1,\ve_2}(\widehat{\mathfrak{sl}}(m|n))$.

\textup{(2)} For all $i, j\in I$, we obtain
\begin{equation}\label{eq2.1.1}
[\widetilde{h}_{i,1}, x_{j,r}^{\pm}] = \pm a_{i,j}\left(x_{j,r+1}^{\pm}-b_{i,j}\dfrac{\varepsilon_1 - \varepsilon_2}{2} x_{j, r}^{\pm}\right)
\end{equation}
in $\widetilde{Y}_{\ve_1,\ve_2}(\widehat{\mathfrak{sl}}(m|n))$. 
\end{Lemma}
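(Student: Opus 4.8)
The plan is to prove both parts by induction on $r$, using the inductive definitions \eqref{eq1297} and \eqref{eq1298} of the higher elements $x_{j,r}^{\pm},h_{j,r}$ inside $\widetilde{Y}_{\ve_1,\ve_2}(\widehat{\mathfrak{sl}}(m|n))$. The base cases are already built in: \eqref{eq2.4} is precisely the $r=0,1$ instance of \eqref{eq1.3}, and \eqref{eq2.5} is exactly the $r=0$ instance of \eqref{eq2.1.1}. Before starting the induction I would record one auxiliary fact. Since $\widetilde{h}_{i,1}=h_{i,1}-\tfrac{\ve_1+\ve_2}{2}h_{i,0}^{2}$ is a polynomial in the pairwise commuting elements $h_{*,0},h_{*,1}$ (relation \eqref{eq2.1}), we have $[h_{i,0},\widetilde{h}_{j,1}]=0$ and $[\widetilde{h}_{i,1},\widetilde{h}_{j,1}]=0$ for all $i,j\in I$. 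These are what allow the ``inner'' operator to be transported across the outer $\widetilde{h}$ appearing in the recursion.

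For part (1), assuming \eqref{eq1.3} up to level $r$, I would apply $\ad h_{i,0}$ to the defining formula for $x_{j,r+1}^{\pm}$. When $j\neq 0,m$, where $x_{j,r+1}^{\pm}=\pm a_{j,j}^{-1}[\widetilde{h}_{j,1},x_{j,r}^{\pm}]$, the super Jacobi identity gives $[h_{i,0},[\widetilde{h}_{j,1},x_{j,r}^{\pm}]]=[[h_{i,0},\widetilde{h}_{j,1}],x_{j,r}^{\pm}]+[\widetilde{h}_{j,1},[h_{i,0},x_{j,r}^{\pm}]]$; the first term vanishes by the auxiliary fact and the second equals $\pm a_{i,j}[\widetilde{h}_{j,1},x_{j,r}^{\pm}]$ by the induction hypothesis, so dividing back by $\pm a_{j,j}$ reproduces $\pm a_{i,j}x_{j,r+1}^{\pm}$. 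When $j=0,m$ the formula \eqref{eq1298} carries the extra summand $b_{j+1,j}\tfrac{\ve_1-\ve_2}{2}x_{j,r}^{\pm}$; applying $\ad h_{i,0}$ and the induction hypothesis multiplies that summand by $\pm a_{i,j}$ as well, so the whole right-hand side again reassembles into $\pm a_{i,j}x_{j,r+1}^{\pm}$. Since $h_{i,0}$ and $\widetilde{h}_{j,1}$ are even, no sign corrections from the super structure intervene; all signs come solely from the $\pm$ convention.

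For part (2) the argument has the same shape: assuming \eqref{eq2.1.1} up to level $r$, I would apply $\ad\widetilde{h}_{i,1}$ to the same recursions, use $[\widetilde{h}_{i,1},\widetilde{h}_{j,1}]=0$ to push $\widetilde{h}_{i,1}$ inward via the super Jacobi identity, insert the induction hypothesis, and then rewrite $[\widetilde{h}_{j,1},x_{j,r+1}^{\pm}]$ and $[\widetilde{h}_{j,1},x_{j,r}^{\pm}]$ through the recursion so that the result collapses into $\pm a_{i,j}\big(x_{j,r+1}^{\pm}-b_{i,j}\tfrac{\ve_1-\ve_2}{2}x_{j,r}^{\pm}\big)$ at the next level. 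The case $j=0,m$ differs only by the linear correction term, which the induction hypothesis transports without difficulty.

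The only genuine obstacle is bookkeeping rather than conceptual content: tracking the two distinct recursions (\eqref{eq1297} versus \eqref{eq1298}), the placement of the $\pm$ signs through each nested commutator, and confirming that the correction terms proportional to $b_{i,j}\tfrac{\ve_1-\ve_2}{2}$ land exactly where \eqref{eq2.1.1} predicts. The auxiliary commutativity of the $\widetilde{h}_{i,1}$ deduced from \eqref{eq2.1} is the one ingredient needed beyond the induction itself, and it is immediate; once it is in place both inductions run in parallel and reduce entirely to the super Jacobi identity together with the induction hypothesis.
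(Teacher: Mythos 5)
Your proposal is correct and follows essentially the same route as the paper's proof: a joint induction on $r$, transporting $\ad h_{i,0}$ (resp.\ $\ad\widetilde{h}_{i,1}$) through the recursions \eqref{eq1297}/\eqref{eq1298} via the commutativity of the Cartan generators coming from \eqref{eq2.1}, then reassembling by the induction hypothesis. The one imprecision --- saying the $j=0,m$ case ``differs only by the linear correction term,'' when in fact \eqref{eq1298} also replaces $\widetilde{h}_{j,1}$ by the neighboring $\widetilde{h}_{j+1,1}$ (necessary because $a_{j,j}=0$ there) --- is harmless, since your auxiliary commutativity fact is stated for all pairs of indices and the argument runs verbatim with $\widetilde{h}_{j+1,1}$ in place of $\widetilde{h}_{j,1}$.
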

\begin{proof}
We only show the case that $j=0,m$. The other case is proven in the same way as that of Lemma 2.22 of \cite{GNW}. We prove (1), (2) by the induction on $r$. When $r=0$, they are nothing but \eqref{eq2.4} and \eqref{eq2.5}. Suppose that \eqref{eq1.3} and \eqref{eq2.1.1} hold when $r=k$. First, let us show that \eqref{eq1.3} holds when $r=k+1$. By \eqref{eq1298}, we obtain
\begin{gather}
[h_{i,0}, x^\pm_{j,k+1}]=\pm\dfrac{1}{a_{j,j+1}}[h_{i,0},[\widetilde{h}_{j+1,1}, x_{j,k}^{\pm}]]+b_{j,j+1}\dfrac{\varepsilon_1 - \varepsilon_2}{2}[h_{i,0}, x_{j,k}^{\pm}].\label{541}
\end{gather}
By $[h_{i,0}, h_{j,1}]=0$, we find that the first term of the right hand side of \eqref{541} is equal to
\begin{gather*}
\pm\dfrac{1}{a_{j,j+1}}[h_{i,0},[\widetilde{h}_{j+1,1}, x_{j,k}^{\pm}]]=\pm\dfrac{1}{a_{j,j+1}}[\widetilde{h}_{j+1,1}, [h_{i,0},x_{j,k}^{\pm}]].
\end{gather*}
By the induction hypothesis on $r$, we can rewrite the right hand side of \eqref{541} as
\begin{align*}
&\phantom{{}={}}\pm\dfrac{1}{a_{j,j+1}}[\widetilde{h}_{j+1,1}, [h_{i,0},x_{j,k}^{\pm}]]+b_{j,j+1}\dfrac{\varepsilon_1 - \varepsilon_2}{2}[h_{i,0}, x_{j,k}^{\pm}]\nonumber\\
&=\dfrac{a_{i,j}}{a_{j,j+1}}[\widetilde{h}_{j+1,1}, x_{j,k}^{\pm}]\pm a_{i,j}b_{j,j+1}\dfrac{\varepsilon_1 - \varepsilon_2}{2}x_{j,k}^{\pm}\nonumber\\
&=\dfrac{a_{i,j}}{a_{j,j+1}}\big(\pm a_{j,j+1}(x_{j,k+1}^{\pm}-b_{j,j+1}\dfrac{\varepsilon_1 - \varepsilon_2}{2}x_{j,k}^{\pm})\big)\pm a_{i,j}b_{j,j+1}\dfrac{\varepsilon_1 - \varepsilon_2}{2}x_{j,k}^{\pm}\nonumber\\
&=\pm a_{i,j}x_{j,k+1}^{\pm}.
\end{align*}
Thus, we have shown that $[h_{i,0}, x^\pm_{j,k+1}]=\pm a_{i,j}x_{j,k+1}^{\pm}.$

Next, we show that \eqref{eq1.3} holds when $r=k+1$. Since we have already proved that \eqref{eq1.3} holds when $r=k+1$, it is enough to check the relation
\begin{equation*}
[\widetilde{h}_{i,1}, x^\pm_{j,k+1}]=\pm a_{i,j}\left(x^\pm_{j,k+2}-b_{i,j}\dfrac{\varepsilon_1 - \varepsilon_2}{2}x^\pm_{j,k+1}\right).
\end{equation*}
By \eqref{eq1298}, we obtain
\begin{gather}
[\widetilde{h}_{i,1}, x^\pm_{j,k+1}]=\pm\dfrac{1}{a_{j,j+1}}[\widetilde{h}_{i,1}, [\widetilde{h}_{j+1,1}, x_{j,k}^{\pm}]]+b_{j,j+1}\dfrac{\varepsilon_1 - \varepsilon_2}{2}[\widetilde{h}_{i,1}, x_{j,k}^{\pm}].\label{542}
\end{gather}
By $[h_{i,1}, h_{j,1}]=0$, we find that the right hand side of \eqref{542} is equal to
\begin{gather*}
\pm\dfrac{1}{a_{j,j+1}}[\widetilde{h}_{j+1,1},[\widetilde{h}_{i,1}, x_{j,k}^{\pm}]]+b_{j,j+1}\dfrac{\varepsilon_1 - \varepsilon_2}{2}[\widetilde{h}_{i,1}, x_{j,k}^{\pm}].
\end{gather*}
By the induction hypothesis on $r$, we can rewrite the right hand side of \eqref{542} as
\begin{align}
&\quad\pm\dfrac{1}{a_{j,j+1}}[\widetilde{h}_{j+1,1},[\widetilde{h}_{i,1}, x_{j,k}^{\pm}]]+b_{j,j+1}\dfrac{\varepsilon_1 - \varepsilon_2}{2}[\widetilde{h}_{i,1}, x_{j,k}^{\pm}]\nonumber\\
&=\dfrac{a_{i,j}}{a_{j,j+1}}\left([\widetilde{h}_{j+1,1},x_{j,k+1}^{\pm}]-b_{i,j}\dfrac{\varepsilon_1 - \varepsilon_2}{2}[\widetilde{h}_{j+1,1},x_{j,k}^{\pm}]\right)\nonumber\\
&\qquad\qquad\qquad\pm a_{i,j}b_{j,j+1}\dfrac{\varepsilon_1 - \varepsilon_2}{2}\left(x_{j,k+1}^{\pm}-b_{i,j}\dfrac{\varepsilon_1 - \varepsilon_2}{2}x_{j,k}^{\pm}\right).\label{956}
\end{align}
Since $x_{j,k+2}^{\pm}$ is defined by \eqref{eq1298}, we find that the right hand side of \eqref{956} is equal to
\begin{align*}
&\pm a_{i,j}\left(x_{j,k+2}^{\pm}-b_{j,j+1}\dfrac{\varepsilon_1 - \varepsilon_2}{2}x_{j,k+1}^{\pm}\right)\mp a_{i,j}b_{i,j}\dfrac{\varepsilon_1 - \varepsilon_2}{2}\left(x^\pm_{j,k+1}-b_{j,j+1}\dfrac{\varepsilon_1 - \varepsilon_2}{2}x^\pm_{j,k}\right)\\
&\qquad\qquad\qquad\pm a_{i,j}b_{j,j+1}\dfrac{\varepsilon_1 - \varepsilon_2}{2}\left(x_{j,k+1}^{\pm}-b_{i,j}\dfrac{\varepsilon_1 - \varepsilon_2}{2}x_{j,k}^{\pm}\right).
\end{align*}
By direct computation, it is equal to
\begin{align*}
\pm a_{i,j}\left(x^\pm_{j,k+2}-b_{i,j}\dfrac{\varepsilon_1 - \varepsilon_2}{2}x^\pm_{j,k+1}\right).
\end{align*}
This completes the proof.
\end{proof}
Similarly, we also obtain the following lemma in a similar way to the one of \cite{GNW}. 
\begin{Lemma}\label{Lemma29}
\textup{(1)} The relation \eqref{eq1.2} holds in $\widetilde{Y}_{\ve_1,\ve_2}(\widehat{\mathfrak{sl}}(m|n))$ when $i=j$ and $r+s\leq2$.

\textup{(2)} Suppose that $i,j\in I$ and $i\neq j$. Then, the relations \eqref{eq1.2} and \eqref{eq1.5} hold for any $r$ and $s$ in $\widetilde{Y}_{\ve_1,\ve_2}(\widehat{\mathfrak{sl}}(m|n))$. 

\textup{(3)} The relation \eqref{eq1.5} holds in $\widetilde{Y}_{\ve_1,\ve_2}(\widehat{\mathfrak{sl}}(m|n))$ when $i=j$, $(r,s)=(1,0)$.

\textup{(4)} The relation \eqref{eq1.4} holds in $\widetilde{Y}_{\ve_1,\ve_2}(\widehat{\mathfrak{sl}}(m|n))$ when $i=j$, $(r,s)=(1,0)$.

\textup{(5)} For all $i,j\in I$, the relation \eqref{eq1.4} holds in $\widetilde{Y}_{\ve_1,\ve_2}(\widehat{\mathfrak{sl}}(m|n))$ when $(r,s)=(1,0)$.

\textup{(6)} Set $\widetilde{h}_{i,2}$ as $h_{i,2}-h_{i,0}h_{i,1}+\dfrac{1}{3}h_{i,0}^3$. Then, the following equation holds for all $i,j\in I$ in $\widetilde{Y}_{\ve_1,\ve_2}(\widehat{\mathfrak{sl}}(m|n))$;
\begin{align*}
[\widetilde{h}_{i,2},x^\pm_{j,0}]&=\pm a_{i,j}x^\pm_{j,2}\pm\dfrac{1}{12}a_{i,j}^3x^\pm_{j,0}\mp a_{i,j}b_{i,j}\dfrac{\ve_1-\ve_2}{2}(x^\pm_{j,1}-\dfrac{1}{2}x^\pm_jh_i- b_{i,j}\dfrac{\ve_1-\ve_2}{2}x^\pm_j).
\end{align*} 

\textup{(7)} For all $i,j\in I$, the relation \eqref{eq1.6} holds in $\widetilde{Y}_{\ve_1,\ve_2}(\widehat{\mathfrak{sl}}(m|n))$ when
 \begin{enumerate}
  \item $r_1 = \cdots = r_b = 0$, $s\in\mathbb{Z}_{\ge 0}$,
  \item $r_1 = 1$, $r_2 = \cdots = r_b = 0$, $s\in\mathbb{Z}_{\ge 0}$,
  \item $r_1 = 2$, $r_2 = \cdots = r_b = 0$, $s\in\mathbb{Z}_{\ge 0}$,
  \item \textup($b\ge 2$ and\textup) $r_1 = r_2 = 1$, $r_3 =\cdots = r_b = 0$, $s \in\mathbb{Z}_{\ge 0}$. 
\end{enumerate}

\textup{(8)} In $\widetilde{Y}_{\ve_1,\ve_2}(\widehat{\mathfrak{sl}}(m|n))$, we have
\begin{equation*}
[h_{j,1}, x^\pm_{i,1}]=\dfrac{a_{i,j}}{a_{i,i}}[h_{i,1},x^\pm_{i,1}]\pm\dfrac{a_{i,j}}{2}(\{h_{j,0},x^\pm_{i,1}\}-\{h_{i,0},x^\pm_{i,1}\})\mp a_{j,i}m_{j,i}\dfrac{\ve_1-\ve_2}{2}x^\pm_{i,1},
\end{equation*}
for all $i, j\in I$ such that $a_{i,i}\neq0$.

\textup{(9)} For all $i, j\in I$, we have
\begin{equation*}
[h_{i,2}, h_{j,0}]=0
\end{equation*}
in $\widetilde{Y}_{\ve_1,\ve_2}(\widehat{\mathfrak{sl}}(m|n))$. 

\textup{(10)}Suppose that $i, j\in I$ such that $a_{i,i}=2$ and $a_{i,j}=-1$. Then,
\begin{align*}
[h_{i,2}, h_{i,1}]=0
\end{align*}
holds in $\widetilde{Y}_{\ve_1,\ve_2}(\widehat{\mathfrak{sl}}(m|n))$. 
\end{Lemma}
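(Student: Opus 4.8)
The plan is to verify the ten assertions in the listed order as a single bootstrap, each feeding the next. Three tools recur throughout: the defining relations \eqref{eq2.1}--\eqref{eq2.9} of $\widetilde{Y}_{\ve_1,\ve_2}(\widehat{\mathfrak{sl}}(m|n))$; the inductive definitions \eqref{eq1297}--\eqref{eq1298} of the higher generators $x^\pm_{i,r}$ and $h_{i,r}$; and, above all, the shift identity \eqref{eq2.1.1} of Lemma~\ref{Lemma92}, which raises the second index of any $x^\pm_{j,r}$ by one upon bracketing with $\widetilde{h}_{i,1}$. Every step is an application of the super Jacobi identity with careful sign bookkeeping, the only subtlety being the odd parities of $x^\pm_{0,r}$ and $x^\pm_{m,r}$.

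For (1) I would argue by hand: the cases $r+s\le 1$ are \eqref{eq2.2}--\eqref{eq2.3} together with the definition $h_{i,r+1}=[x^+_{i,r+1},x^-_{i,0}]$, while $r+s=2$ reduces to the single identity $[x^+_{i,1},x^-_{i,1}]=[x^+_{i,2},x^-_{i,0}]$, which I obtain by writing $x^\pm_{i,1}$ through $\widetilde{h}_{i,1}$ via \eqref{eq1297}--\eqref{eq1298} and expanding with Jacobi and \eqref{eq2.1.1}. Part (2), the case $i\neq j$, is cleaner: applying $\ad(\widetilde{h}_{k,1})$ repeatedly through \eqref{eq2.1.1} to the base relations \eqref{eq2.2} and \eqref{eq2.6} propagates both \eqref{eq1.2} and \eqref{eq1.5} to all degrees, since for $i\neq j$ the right-hand sides are either zero or already controlled. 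Parts (3)--(5), the instances $(r,s)=(1,0)$ of \eqref{eq1.5} and \eqref{eq1.4}, I would read off directly from \eqref{eq2.5}, \eqref{eq2.6} and \eqref{eq2.4} after substituting $h_{i,1}=\widetilde{h}_{i,1}+\tfrac{\ve_1+\ve_2}{2}h_{i,0}^2$ and using $b_{i,i}=0$.

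Parts (6)--(8) are preparatory. For (6) I introduce $\widetilde{h}_{i,2}=h_{i,2}-h_{i,0}h_{i,1}+\tfrac13 h_{i,0}^3$ and compute $[\widetilde{h}_{i,2},x^\pm_{j,0}]$ by the same shift technique as for $\widetilde{h}_{i,1}$: the cubic correction is exactly what forces the bracket to close on $x^\pm_{j,2}$, $x^\pm_{j,1}$ and lower terms, and the check is a bracket expansion using (1)--(5). Part (7) propagates the degree-zero Serre relation \eqref{eq2.7} into the four indicated configurations by applying $\ad(\widetilde{h}_{\cdot,1})$, and for $r_1=2$ the operator built from $\widetilde{h}_{\cdot,2}$ of (6), collecting the shifted terms through \eqref{eq2.1.1}. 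Part (8) is an auxiliary identity for $[h_{j,1},x^\pm_{i,1}]$ that I would extract by writing $h_{j,1}=[x^+_{j,1},x^-_{j,0}]$, applying Jacobi, and rewriting with (2) and (5).

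The main obstacle is (9) and (10), the commutativity of the Cartan currents, which is not manifest because $h_{i,r}$ is only defined as a bracket of the $x$'s. Part (9) is in fact easy once made concrete: writing $h_{i,2}=[x^+_{i,2},x^-_{i,0}]$ (legitimate by (1)) and using $[h_{j,0},x^\pm_{i,r}]=\pm a_{j,i}x^\pm_{i,r}$ from \eqref{eq1.3}, the two contributions to $[h_{j,0},h_{i,2}]$ cancel. Part (10) is the genuinely delicate step and the one place where a substantial computation seems unavoidable: writing $h_{i,2}=[x^+_{i,1},x^-_{i,1}]$ and distributing $\ad(h_{i,1})$ as a derivation reduces $[h_{i,1},h_{i,2}]$ to the two brackets $[h_{i,1},x^\pm_{i,1}]$, which I would evaluate by solving the auxiliary formula (8) for $[h_{i,1},x^\pm_{i,1}]$ in terms of the adjacent current $[h_{j,1},x^\pm_{i,1}]$---this is where the hypotheses $a_{i,i}=2$ and $a_{i,j}=-1$ enter, guaranteeing $i\neq 0,m$ and an invertible coefficient $a_{i,i}$---and then check that all resulting terms cancel. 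I expect this final cancellation, a lengthy sign-and-coefficient bookkeeping, to be the crux of the whole lemma.
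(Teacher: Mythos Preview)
There is a genuine gap in your treatment of parts (3)--(5). These are \emph{not} the $(r,s)=(0,0)$ instances of \eqref{eq1.4}--\eqref{eq1.5} (which would indeed be \eqref{eq2.5}--\eqref{eq2.6} in disguise), but the $(r,s)=(1,0)$ instances: for example (3) asserts
\[
[x^\pm_{i,2},x^\pm_{i,0}]-[x^\pm_{i,1},x^\pm_{i,1}]
=\pm a_{i,i}\,\tfrac{\ve_1+\ve_2}{2}\,\{x^\pm_{i,1},x^\pm_{i,0}\},
\]
which involves the inductively defined $x^\pm_{i,2}$ and cannot be read off from the degree-$\le 1$ defining relations. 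Even for $i\ne 0,m$ this requires a genuine argument (the paper defers it to Lemma~2.23 of \cite{GNW}). The new difficulty is the odd case $i=j\in\{0,m\}$, where $a_{i,i}=0$ and the statement becomes $[x^\pm_{i,2},x^\pm_{i,0}]=[x^\pm_{i,1},x^\pm_{i,1}]$. Here the paper's argument is subtle: applying $\ad(\widetilde h_{i+1,1})$ to $[x^\pm_{i,0},x^\pm_{i,0}]=0$ and to $[x^\pm_{i,1},x^\pm_{i,0}]=0$ yields two linear relations among $[x^\pm_{i,2},x^\pm_{i,0}]$, $[x^\pm_{i,1},x^\pm_{i,1}]$, $[x^\pm_{i,0},x^\pm_{i,2}]$; but since the generators are odd these brackets are symmetric in their indices and the two relations coincide. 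A third independent equation is needed, and the paper obtains it by first proving (5) in the special case $j\in\{0,m\}$, $i=j+1$ (following Lemma~2.28 of \cite{GNW}), using this to build an auxiliary element $\widehat h_{i+1,2}$ with $[\widehat h_{i+1,2},x^\pm_{i,0}]=\pm a_{i,i+1}x^\pm_{i,2}$, and then applying $\ad(\widehat h_{i+1,2})$ to \eqref{eq2.8}. Your proposal misses both the nontriviality of the $(r,s)=(1,0)$ level and this bootstrap (a special case of (5) is needed inside the proof of (3)).

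Your sketch of the remaining parts (1), (2), (6)--(10) is broadly in line with the paper, which for those parts simply invokes the corresponding arguments in \cite{GNW} and \cite{L}; in particular your outline of (10) via (8) and the derivation identity matches the strategy of Proposition~2.36 of \cite{GNW}.
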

\begin{proof}
We only prove (1)-(5) since the proof of (6) (resp.\ (7), (8), (9)) is same as that of Lemma 2.33 (resp.\ Lemma 2.34, Lemma 2.35, Proposition 2.36).

The proofs of (1) and (2) are the same as those of Lemma 2.22 and Lemma 2.26 in \cite{GNW}. In the case where $i,j\neq 0,m$, the proof of (3) (resp.\ (4) and (5)) is also the same as that of Lemma 2.23 (resp.\ Lemma 2.24 and Lemma 2.28) in \cite{GNW}. We omit it. We only show that (3) holds since (4) and (5) are derived from (3) in a similar way to the one of \cite{GNW}.

Suppose that $i=j=0,m$. First, we show that $[x^+_{i,1},x^+_{i,0}]=[x^+_{i,0},x^+_{i,1}]=0$ holds. Applying $\ad(\widetilde{h}_{i+1,1})$ to \eqref{eq2.8}, we have $\pm a_{i,i+1}[x^\pm_{i,1},x^+_{i,0}]\pm a_{i,i+1}[x^\pm_{i,0},x^\pm_{i,1}]$. Since $[x^\pm_{i,1},x^\pm_{i,0}]$ is equal to $[x^\pm_{i,0},x^\pm_{i,1}]$, we obtain $[x^\pm_{i,1},x^\pm_{i,0}]=[x^\pm_{i,0},x^\pm_{i,1}]=0$. Next, we show that $[x^\pm_{i,2},x^\pm_{i,0}]=[x^\pm_{i,1},x^\pm_{i,1}]=[x^\pm_{i,0},x^\pm_{i,2}]$ holds.
Applying $\ad(\widetilde{h}_{i+1,1})$ to $[x^\pm_{i,1},x^\pm_{i,0}]=[x^\pm_{i,0},x^\pm_{i,1}]=0$, we obtain
\begin{gather}
\pm a_{i,i+1}([x^\pm_{i,2},x^\pm_{i,0}]+[x^\pm_{i,1},x^\pm_{i,1}])=0,\label{er1}\\
\pm a_{i,i+1}([x^\pm_{i,1},x^\pm_{i,1}]+[x^\pm_{i,0},x^\pm_{i,2}])=0.\label{er2}
\end{gather}
In the case where $j=0,m$ and $i=j+1$, we can prove (5) in a similar way to the one of Lemma~2.28 in \cite{GNW}. Then, in the similar discussion to that of Lemma~1.4 in \cite{L}, there exists $\widehat{h}_{i+1,2}$ such that
\begin{align*}
[\widehat{h}_{i+1,2},x^\pm_{i,0}]=\pm a_{i,i+1}x^\pm_{i,2}.
\end{align*}
Applying $\ad(\widehat{h}_{i+1,2})$ to \eqref{eq2.8}, we obtain 
\begin{equation}\label{er3}
\pm a_{i,i+1}([x^\pm_{i,2},x^\pm_{i,0}]+[x^\pm_{i,0},x^\pm_{i,2}])=0.
\end{equation}
Since \eqref{er1}, \eqref{er2}, and \eqref{er3} are linearly independent, we obtain $[x^\pm_{i,2},x^\pm_{i,0}]=[x^\pm_{i,1},x^\pm_{i,1}]=[x^\pm_{i,0},x^\pm_{i,2}]$.
We have proved (3).
\end{proof}
In the case where $a_{i,i}=-2$ and $a_{i,j}=1$, we obtain $[h_{i,2}, h_{i,1}]=0$ by changing the proof of Proposition 2.36 of \cite{GNW} a little.
\begin{Lemma}\label{Lem2}
Suppose that $i, j\in I$ such that $a_{i,i}=-2$ and $a_{i,j}=1$. Then, we obtain
\begin{gather*}
[h_{i,2}, h_{i,1}]=0
\end{gather*}
in $\widetilde{Y}_{\ve_1,\ve_2}(\widehat{\mathfrak{sl}}(m|n))$. 
\end{Lemma}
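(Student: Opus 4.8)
The plan is to mirror the argument for Proposition~2.36 of \cite{GNW}, which establishes $[h_{i,2},h_{i,1}]=0$ at a node with $a_{i,i}=2$ and $a_{i,j}=-1$, and to push it through with the opposite signs $a_{i,i}=-2$, $a_{i,j}=1$. The first observation that makes this feasible is that $a_{i,i}=-2$ forces $p(i)=p(i+1)=1$, so the root $\alpha_i$ is even and both $x^\pm_{i,r}$ and $h_{i,r}$ are even generators; hence no Koszul signs enter and the Jacobi-identity combinatorics is literally the non-super one, only the numerical structure constants differing. So I would begin by recording the specializations at the pair $(i,j)$ (with $j=i\pm1$, so that $a_{i,j}=1$) of the identities already proven in Lemma~\ref{Lemma29}: part~(6) for $[\widetilde{h}_{i,2},x^\pm_{j,0}]$, part~(8) expressing $[h_{i,1},x^\pm_{i,1}]$ through $[h_{j,1},x^\pm_{i,1}]$ (legitimate since $a_{i,i}=-2\neq0$ and $a_{i,j}=1\neq0$), part~(9) giving $[h_{i,2},h_{j,0}]=0$, and the companion part~(10) giving the assertion in the $a_{i,i}=2$ case.

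The core computation would then follow \cite{GNW}. Using Lemma~\ref{Lemma29}(1) I may write $h_{i,2}=[x^+_{i,1},x^-_{i,1}]$, so that the Jacobi identity reduces $[h_{i,1},h_{i,2}]$ to $[[h_{i,1},x^+_{i,1}],x^-_{i,1}]+[x^+_{i,1},[h_{i,1},x^-_{i,1}]]$. The genuinely delicate quantity is $[h_{i,1},x^\pm_{i,1}]$, which produces quadratic self-terms if attacked directly; instead I would solve Lemma~\ref{Lemma29}(8) for it in terms of $[h_{j,1},x^\pm_{i,1}]$, and evaluate the latter cleanly via $[\widetilde{h}_{j,1},x^\pm_{i,1}]=\pm a_{j,i}\bigl(x^\pm_{i,2}-b_{j,i}\frac{\ve_1-\ve_2}{2}x^\pm_{i,1}\bigr)$ (the relation \eqref{eq2.1.1} established in Lemma~\ref{Lemma92}) together with the quadratic correction coming from $\widetilde{h}_{j,1}=h_{j,1}-\frac{\ve_1+\ve_2}{2}h_{j,0}^2$ and \eqref{eq2.4}. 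Substituting back, separating the $\frac{\ve_1+\ve_2}{2}$-symmetric part from the $\frac{\ve_1-\ve_2}{2}$-antisymmetric part, and invoking $[h_{i,2},h_{j,0}]=0$ along with the lower-degree vanishing of $[h_{i,r},h_{j,s}]$ already secured, the two halves of the expansion should cancel and yield $[h_{i,2},h_{i,1}]=0$.

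The step I expect to be the main obstacle is the sign bookkeeping, which is exactly what is hidden in the author's phrase ``changing the proof a little.'' Both Cartan entries are negated relative to \cite{GNW} ($a_{i,i}\colon2\mapsto-2$ and $a_{i,j}\colon-1\mapsto1$), so the coefficients $a_{i,j}$, $a_{i,j}^{3}$ and, crucially, the asymmetry constants $b_{i,j}$ and the auxiliary constant $m_{j,i}$ occurring in Lemma~\ref{Lemma29}(8) all change, and one must check that the $\frac{\ve_1-\ve_2}{2}$-antisymmetric corrections still cancel rather than accumulate. Concretely, I would carry the explicit values of $b_{i,j}$ through each use of \eqref{eq2.1.1} and of Lemma~\ref{Lemma29}(6),(8) and verify that the cancellation pattern of \cite{GNW} survives the uniform sign flip. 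Because the dependence of those right-hand sides on $a_{i,i}$ and $a_{i,j}$ is polynomial with matching parity, I expect the cancellation to persist, but confirming it is the one genuinely non-automatic point of the proof.
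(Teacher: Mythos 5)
Your outline stalls exactly where the lemma lives. Everything before your last paragraph is scaffolding that is already available: the evenness observation, the specializations of Lemma~\ref{Lemma92} and Lemma~\ref{Lemma29}(1),(6),(8),(9), the identity $h_{i,2}=[x^+_{i,1},x^-_{i,1}]$ and the Jacobi reduction of $[h_{i,1},h_{i,2}]$. The only content specific to the case $a_{i,i}=-2$, $a_{i,j}=1$ is whether the $\frac{\ve_1-\ve_2}{2}$-corrections --- whose coefficients $b_{i,j}$, $b_{j,i}$, $m_{j,i}$ all change under the sign flip --- still cancel at the end; and on this point you write that you ``expect the cancellation to persist'' and that confirming it ``is the one genuinely non-automatic point of the proof.'' That is, you defer precisely the step that constitutes the lemma, so what you have is a plan for a proof rather than a proof. (A smaller caveat: your justification ``no Koszul signs enter'' is slightly too quick, since the adjacent node $j$ with $a_{i,j}=1$ can be $j=m$ or $j=0$, whose $x^\pm_{j,r}$ are odd; the claim survives only because the argument uses nothing from node $j$ beyond the even elements $h_{j,0},h_{j,1}$.)

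The paper closes this gap with no bookkeeping at all, by a trick your proposal misses: a change of variables rather than a re-derivation. Negating the appropriate generators, $h_{k,r}\mapsto -h_{k,r}$, $x^+_{k,r}\mapsto -x^+_{k,r}$, $x^-_{k,r}\mapsto x^-_{k,r}$, converts the defining relations \eqref{eq2.1}-\eqref{eq2.9} at a node with $a_{i,i}=-2$, $a_{i,j}=1$ into the relations with $a_{i,i}=2$, $a_{i,j}=-1$ treated in Proposition~2.36 of \cite{GNW}: for instance $[x^+_{i,r},x^-_{i,s}]=h_{i,r+s}$ keeps its form, $[h_{i,0},x^\pm_{i,r}]=\pm a_{i,i}x^\pm_{i,r}$ reappears with $a_{i,i}$ negated, and the constants $b$ flip sign accordingly (harmless, since the cited proposition holds for arbitrary parameters $\ve_1,\ve_2$). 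Consequently every line of the proof in \cite{GNW} applies verbatim to the new variables and returns $[-h_{i,2},-h_{i,1}]=0$, which is the claim. Checking that the finitely many defining relations match under this substitution is a mechanical, one-pass verification; it replaces, and renders unnecessary, the line-by-line sign tracking that your proposal identifies as the hard part but never carries out.
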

\begin{proof}
We change $h_{i,r}$, $x^+_{i,r}$, and $x^-_{i,r}$, which are written in the proof of Proposition 2.36 of \cite{GNW}, into $-h_{i,r}$, $-x^+_{i,r}$, and $x^-_{i,r}$. Then, we obtain $[-h_{i,2}, -h_{i,1}]=0$.
\end{proof}
By Lemma~\ref{Lemma29} (10) and Lemma~\ref{Lem2}, we obtain the following lemma in the same way as Proposition 2.39 of \cite{GNW} since we only need the condition that $a_{i,i}\neq0$ and $a_{i,j}\neq0$. We omit the proof.
\begin{Lemma}\label{Lem45}
Suppose that $i, j\in I$ such that $a_{i,i}\neq0$ and $a_{i,j}\neq0$. Then, we have
\begin{equation*}
[h_{j,2}, h_{j,1}]=0
\end{equation*}
in $\widetilde{Y}_{\ve_1,\ve_2}(\widehat{\mathfrak{sl}}(m|n))$. 
\end{Lemma}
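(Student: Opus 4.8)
The plan is to split on whether $j$ is isotropic. If $a_{j,j}\neq 0$ then $j$ is a non-isotropic (``even'') node, and a direct inspection of the Cartan matrix shows that such a $j$ always has a neighbour $k$ with $a_{j,j}=2,\ a_{j,k}=-1$, or with $a_{j,j}=-2,\ a_{j,k}=1$; hence $[h_{j,2},h_{j,1}]=0$ is already furnished by Lemma~\ref{Lemma29}~(10) or by Lemma~\ref{Lem2}. So the genuinely new content is the isotropic case $a_{j,j}=0$ (that is, $j=0,m$), and there the hypothesis $a_{i,i}\neq 0,\ a_{i,j}\neq 0$ is used precisely to supply an even neighbour through which one can raise degrees.

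For isotropic $j$ I would first rewrite the target bracket. By Lemma~\ref{Lemma29}~(1) we have $h_{j,2}=[x^+_{j,1},x^-_{j,1}]$, and since $x^\pm_{j,1}$ are odd while $h_{j,1}$ is even, the super Jacobi identity gives
\begin{equation*}
[h_{j,2},h_{j,1}]=-[x^+_{j,1},[h_{j,1},x^-_{j,1}]]-[x^-_{j,1},[h_{j,1},x^+_{j,1}]].
\end{equation*}
Hence it suffices to prove that $[h_{j,1},x^\pm_{j,1}]=0$.

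To establish this, I would express $x^\pm_{j,1}$ through its even neighbour via \eqref{eq1298}, namely $x^\pm_{j,1}=\pm a_{j+1,j}^{-1}[\widetilde{h}_{j+1,1},x^\pm_{j,0}]+b_{j+1,j}\frac{\ve_1-\ve_2}{2}x^\pm_{j,0}$, and commute $h_{j,1}$ through it with the Jacobi identity. Two vanishings then close the computation: since $a_{j,j}=b_{j,j}=0$, relation \eqref{eq2.1.1} together with Lemma~\ref{Lemma29}~(5) gives $[h_{j,1},x^\pm_{j,0}]=0$; and $[h_{j,1},\widetilde{h}_{j+1,1}]=0$ because $\widetilde{h}_{j+1,1}=h_{j+1,1}-\frac{\ve_1+\ve_2}{2}h_{j+1,0}^2$ and the brackets $[h_{j,1},h_{j+1,1}]$ and $[h_{j,1},h_{j+1,0}]$ vanish by \eqref{eq2.1}. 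Substituting these into the expanded bracket yields $[h_{j,1},x^\pm_{j,1}]=0$, and therefore $[h_{j,2},h_{j,1}]=0$.

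Alternatively one can run the uniform argument of Proposition~2.39 of \cite{GNW}, which does \emph{not} pass through $[h_{j,1},x^\pm_{j,1}]=0$ (this identity fails at non-isotropic nodes) but instead substitutes the explicit value of $[h_{j,1},x^\pm_{j,1}]$ provided by Lemma~\ref{Lemma29}~(8) and checks that the whole combination occurring in $[h_{j,2},h_{j,1}]$ cancels, using only the base relation $[h_{i,2},h_{i,1}]=0$ at the even node $i$ and the relation $[h_{i,2},h_{j,0}]=0$ from Lemma~\ref{Lemma29}~(9). I expect the only real obstacle to be the bookkeeping: keeping the super signs straight once the odd generators $x^\pm_0,x^\pm_m$ enter the Jacobi expansions, and making sure that every degree is raised at an even node (where $a_{i,i}\neq 0$) so that one never divides by the vanishing $a_{j,j}$. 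This is exactly the point at which only $a_{i,i}\neq 0$ and $a_{i,j}\neq 0$ are needed, so the computation carries over verbatim from the non-super setting.
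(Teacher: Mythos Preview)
Your proof is correct and more explicit than the paper's, which simply says the argument of Proposition~2.39 of \cite{GNW} carries over verbatim since only $a_{i,i}\neq 0$ and $a_{i,j}\neq 0$ are used. Your case split is sharper: when $a_{j,j}\neq 0$ the result is already Lemma~\ref{Lemma29}~(10) or Lemma~\ref{Lem2} applied at the node $j$, and when $a_{j,j}=0$ you give a direct computation showing $[h_{j,1},x^\pm_{j,1}]=0$ (via \eqref{eq1298}, the vanishing $[h_{j,1},\widetilde h_{j+1,1}]=0$ from \eqref{eq2.1}, and $[h_{j,1},x^\pm_{j,0}]=0$ from $a_{j,j}=0$), which kills $[h_{j,2},h_{j,1}]$ by Jacobi. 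This is genuinely simpler than the uniform GNW argument you mention as an alternative, and which the paper invokes: the isotropic case is actually \emph{easier}, because $a_{j,j}=b_{j,j}=0$ makes the obstructing brackets vanish outright, so no substitution from Lemma~\ref{Lemma29}~(8) or cancellation against $[h_{i,2},h_{i,1}]=0$ at the even neighbour is needed. One minor point: $[h_{j,1},x^\pm_{j,0}]=0$ already follows from \eqref{eq2.5} and \eqref{eq2.4} at $a_{j,j}=0$; the reference to Lemma~\ref{Lemma29}~(5) there is superfluous.
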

Therefore, we know that $[h_{i,2}, h_{i,1}]=0$ holds for all $i\in I$. By using the relation $[h_{i,2}, h_{i,1}]=0$, we obtain the following lemma in a similar way as that of Theorem~1.2 in \cite{L} since the proof of these statements needs only the condition that $a_{i,i}\neq0$.
\begin{Lemma}\label{Lemma30}
\textup{(1)} The relation \eqref{eq1.1} holds in $\widetilde{Y}_{\ve_1,\ve_2}(\widehat{\mathfrak{sl}}(m|n))$ when $i=j\neq0,m$.

\textup{(2)} The relation \eqref{eq1.2} holds in $\widetilde{Y}_{\ve_1,\ve_2}(\widehat{\mathfrak{sl}}(m|n))$ when $i=j\neq0,m$.

\textup{(3)} The relation \eqref{eq1.5} holds in $\widetilde{Y}_{\ve_1,\ve_2}(\widehat{\mathfrak{sl}}(m|n))$ when $i=j\neq0,m$.

\textup{(4)} The relation \eqref{eq1.4} holds in $\widetilde{Y}_{\ve_1,\ve_2}(\widehat{\mathfrak{sl}}(m|n))$ when $i=j\neq0,m$.
\end{Lemma}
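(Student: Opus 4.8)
The plan is to reduce to the rank-one situation and then follow the inductive argument of \cite[Theorem~1.2]{L}. The role of the hypothesis $i=j\neq 0,m$ is that for such an index the generators $x^{\pm}_{i,r}$ are \emph{even}, and $a_{i,i}=\pm 2\neq 0$ while $b_{i,i}=0$. Consequently every superbracket occurring in \eqref{eq1.1}, \eqref{eq1.2}, \eqref{eq1.4}, \eqref{eq1.5} among the generators indexed by the single node $i$ is an ordinary commutator, and the four relations to be verified are exactly those governing the $\mathfrak{sl}_2$-type triple $\{x^{+}_{i,r},x^{-}_{i,r},h_{i,r}\}_{r\ge 0}$, with $b_{i,i}=0$ simplifying the right-hand sides of \eqref{eq1.4} and \eqref{eq1.5}. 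I would therefore arrange the argument so that the super signs never intervene, making the computation identical to the non-super case treated by Levendorskii, whose proof uses only the hypothesis $a_{i,i}\neq 0$.

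Next I would collect the base cases, all of which are already available: the instances of \eqref{eq1.1} with $r,s\le 1$ are part of the minimalistic relations \eqref{eq2.1}, while $[h_{i,2},h_{i,0}]=0$ and $[h_{i,2},h_{i,1}]=0$ are Lemma~\ref{Lemma29}(9) and Lemma~\ref{Lem45}; relation \eqref{eq1.2} for $r+s\le 2$ is Lemma~\ref{Lemma29}(1); and \eqref{eq1.4}, \eqref{eq1.5} at $(r,s)=(1,0)$ are Lemma~\ref{Lemma29}(4),(3). The driving tool is $\ad(\widetilde{h}_{i,1})$: since $b_{i,i}=0$, \eqref{eq2.1.1} reads $[\widetilde{h}_{i,1},x^{\pm}_{i,r}]=\pm a_{i,i}\,x^{\pm}_{i,r+1}$, so this even derivation raises the index $r$ by one up to the invertible scalar $a_{i,i}$. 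Applying it to a relation among the $x^{\pm}_{i,r}$ valid at one level and expanding by the ordinary Leibniz rule produces a relation one level higher, after substituting the definition \eqref{eq1297} of $x^{\pm}_{i,r+1}$; the relations involving $h_{i,r}$ are then reached by realizing $h_{i,r+1}=[x^{+}_{i,r+1},x^{-}_{i,0}]$ as in \eqref{eq1297} and invoking the already-proven lower cases of \eqref{eq1.1} and \eqref{eq1.2}. I would run the four relations through a single induction on $r+s$, each step feeding the previously established lower instances of all four into the bracket computation.

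The main obstacle is the familiar one in this circle of arguments: applying the single degree-raising operator $\ad(\widetilde{h}_{i,1})$ to a relation at level $(r,s)$ returns not the two relations at levels $(r+1,s)$ and $(r,s+1)$ separately, but only their sum. Separating them is precisely what forces the simultaneous induction on $r+s$, used together with the symmetry of \eqref{eq1.5} in $(r,s)$ and the anchoring identity $[h_{i,2},h_{i,1}]=0$ to pin down the individual level-$(r+s+1)$ relations; this is exactly the bookkeeping of \cite[Theorem~1.2]{L}, which I would follow step by step. A secondary, minor point is the sign of $a_{i,i}$: for $m+1\le i\le m+n-1$ one has $a_{i,i}=-2$ rather than $2$, so Levendorskii's statement (written for $a_{i,i}=2$) must be transported by the rescaling $h_{i,r}\mapsto -h_{i,r}$, $x^{+}_{i,r}\mapsto -x^{+}_{i,r}$, $x^{-}_{i,r}\mapsto x^{-}_{i,r}$, exactly as in the proof of Lemma~\ref{Lem2}.
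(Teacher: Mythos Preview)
Your proposal is correct and follows essentially the same approach as the paper: the paper itself omits the details and simply states that the result is obtained ``in a similar way as that of Theorem~1.2 in \cite{L} since the proof of these statements needs only the condition that $a_{i,i}\neq0$,'' using the key input $[h_{i,2},h_{i,1}]=0$ established just before. Your sketch makes explicit the mechanics behind that reference (the base cases, the degree-raising by $\ad(\widetilde h_{i,1})$, and the simultaneous induction), and your remark on the sign rescaling for $a_{i,i}=-2$ is a reasonable way to spell out why the condition $a_{i,i}\neq0$ suffices; one small imprecision is that for $i\neq 0,m$ the identity $[h_{i,2},h_{i,1}]=0$ comes directly from Lemma~\ref{Lemma29}(10) or Lemma~\ref{Lem2}, rather than from Lemma~\ref{Lem45}.
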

Next, we prove the same statement as that of Lemma~\ref{Lemma30} in the case that $i=j=0,m$.
\begin{Lemma}\label{Lemma31}
\textup{(1)} The relation \eqref{eq1.5}  holds in $\widetilde{Y}_{\ve_1,\ve_2}(\widehat{\mathfrak{sl}}(m|n))$ when $i=j=0,m$. In particular, \eqref{eq1.7} holds in $\widetilde{Y}_{\ve_1,\ve_2}(\widehat{\mathfrak{sl}}(m|n))$.

\textup{(2)} The relation \eqref{eq1.2} holds in $\widetilde{Y}_{\ve_1,\ve_2}(\widehat{\mathfrak{sl}}(m|n))$ when $i=j=0,m$.

\textup{(3)} We obtain $[h_{i,r},x^\pm_{i,0}]=0$ when $i=0,m$ in $\widetilde{Y}_{\ve_1,\ve_2}(\widehat{\mathfrak{sl}}(m|n))$.

\textup{(4)} The relation \eqref{eq1.4} holds in $\widetilde{Y}_{\ve_1,\ve_2}(\widehat{\mathfrak{sl}}(m|n))$ when $i=j=0,m$.

\textup{(5)} The relation \eqref{eq1.1} holds in $\widetilde{Y}_{\ve_1,\ve_2}(\widehat{\mathfrak{sl}}(m|n))$ when $i=j=0,m$.
\end{Lemma}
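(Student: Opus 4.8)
The plan is to establish (1) and (2) by induction on the total degree and then read off (3), (4) and (5) as formal consequences. The new difficulty is that $i=0,m$ forces $a_{i,i}=b_{i,i}=0$, so the right-hand sides of \eqref{eq1.4}, \eqref{eq1.5} and \eqref{eq2.1.1} for the pair $(i,i)$ vanish and $\widetilde h_{i,1}$ no longer raises the degree of $x^\pm_{i,r}$. Thus the Levendorskii-type induction behind Lemma~\ref{Lemma30} breaks down, and I would instead use a neighbouring node $i\pm 1$ (where $a_{i+1,i}\ne 0$) as the raising device, through $[\widetilde h_{i+1,1},x^\pm_{i,r}]=\pm a_{i+1,i}(x^\pm_{i,r+1}-b_{i+1,i}\tfrac{\ve_1-\ve_2}{2}x^\pm_{i,r})$ from Lemma~\ref{Lemma92}(2) together with the recursion \eqref{eq1298}.

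For (1) I would induct on $N=r+s$ for the brackets $c_{r,s}=[x^\pm_{i,r},x^\pm_{i,s}]$, which are symmetric in $r,s$ since $x^\pm_{i,r}$ is odd. The base $N=0$ is \eqref{eq2.8}; for $N=1$, applying $\ad(\widetilde h_{i+1,1})$ to \eqref{eq2.8} and using the symmetry of the odd bracket gives $2a_{i+1,i}c_{1,0}=0$, hence $c_{1,0}=0$. For $N\ge 2$ I would apply both $\widetilde h_{i+1,1}$ and a degree-two raising operator $\widehat h_{i+1,2}$, constructed as in Lemma~1.4 of \cite{L} (exactly as already used in Lemma~\ref{Lemma29}(3)) so that $[\widehat h_{i+1,2},x^\pm_{i,r}]=\pm a_{i+1,i}x^\pm_{i,r+2}$, to the brackets of levels $N-1$ and $N-2$, which vanish by induction. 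The hypothesis kills all lower-order corrections and leaves $c_{r+1,s}+c_{r,s+1}=0$ and $c_{r+2,s}+c_{r,s+2}=0$; together with $c_{r,s}=c_{s,r}$ these force every level-$N$ bracket to vanish. This is \eqref{eq1.7}, and since $a_{i,i}=b_{i,i}=0$ relation \eqref{eq1.5} at $i=j=0,m$ is then automatic.

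For (2) I would induct on $t=r+s$ in $[x^+_{i,r},x^-_{i,s}]=h_{i,t}$, the cases $t\le 1$ being \eqref{eq2.2} and \eqref{eq2.3}. Substituting \eqref{eq1298} for $x^+_{i,r+1}$ and using the super Jacobi identity yields the exact identity $[x^+_{i,r+1},x^-_{i,s}]-[x^+_{i,r},x^-_{i,s+1}]=a_{i+1,i}^{-1}[\widetilde h_{i+1,1},[x^+_{i,r},x^-_{i,s}]]$; by the inductive hypothesis the inner bracket is $h_{i,t}$, and the weight computation $[h_{i+1,0},h_{i,t}]=0$ gives $[\widetilde h_{i+1,1},h_{i,t}]=[h_{i+1,1},h_{i,t}]$, so the mixed shift — equivalently \eqref{eq1.2} at level $t+1$ — is equivalent to the Cartan relation $[h_{i+1,1},h_{i,t}]=0$. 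Granting (1) and (2), parts (3)--(5) are formal: writing $h_{i,r}=[x^+_{i,0},x^-_{i,r}]$ and using $[x^+_{i,0},x^+_{i,0}]=0$, the super Jacobi identity gives $[h_{i,r},x^+_{i,0}]=[x^+_{i,0},h_{i,r}]=-[h_{i,r},x^+_{i,0}]$, so $[h_{i,r},x^\pm_{i,0}]=0$, which is (3); replacing $x^\pm_{i,0}$ by $x^\pm_{i,s}$ reduces $[h_{i,r},x^\pm_{i,s}]$, up to sign, to $[h_{i,r+s},x^\pm_{i,0}]=0$, giving (4) (whose right-hand side vanishes); and finally $[h_{i,r},h_{i,s}]=[h_{i,r},[x^+_{i,s},x^-_{i,0}]]=0$ by (4), giving (5).

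The main obstacle is exactly the Cartan relation $[h_{i+1,1},h_{i,t}]=0$ feeding (2): as isolated above it is logically equivalent to the mixed shift it should prove, so the naive induction does not close and an independent input at each level is needed. Breaking this is the crux, and here I would use that the neighbour $i+1$ is even, so its full loop relations are already available from Lemma~\ref{Lemma30}: writing $h_{i+1,1}=[x^+_{i+1,1},x^-_{i+1,0}]$ and pushing the bracket onto the $x^\pm_{i,\bullet}$, I would evaluate the resulting terms using the clean degree-one and degree-two actions of Lemma~\ref{Lemma92}(2) and Lemma~\ref{Lemma29}(6), the off-diagonal relations of Lemma~\ref{Lemma29}(2), and $[h_{i,2},h_{j,0}]=0$ from Lemma~\ref{Lemma29}(9), with the remaining off-diagonal contributions expected to be controlled by the quartic super-Serre relation \eqref{eq2.9}. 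This is the degenerate-isotropic analogue of the $\mathfrak{sl}_2$-bookkeeping in \cite{L} and \cite{GNW}, and it is where the super-signs and the relation \eqref{eq2.9} genuinely enter.
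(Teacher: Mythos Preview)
Your plan for (1) matches the paper: induction on $r+s$ using $\widetilde h_{i+1,1}$ together with a degree-two Levendorskii operator $\widetilde h_{i+1,2}$ and the symmetry of the odd bracket. Parts (3) and (5) are indeed formal, and your Jacobi derivation of (3) is the paper's. However, your reduction for (4) is wrong: the Jacobi identity combined with (1) and (2) gives only $[h_{i,r},x^\pm_{i,s}]=-[h_{i,s},x^\pm_{i,r}]$, an antisymmetry in $(r,s)$, not a reduction to $[h_{i,r+s},x^\pm_{i,0}]$. The paper instead inducts on $s$, applying $\ad(\widetilde h_{i+1,1})$ to $[h_{i,r},x^\pm_{i,s}]=0$ and using $[\widetilde h_{i+1,1},h_{i,r}]=0$, which is obtained in the course of proving (2).

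The more serious gap is your resolution of the crux in (2). The super-Serre relation \eqref{eq2.9} plays no role here, and invoking only degree-one and degree-two raising operators cannot control $h_{i,k}$ for general $k$. The paper breaks the circularity by a Levendorskii construction on the \emph{isotropic} side. Using the induction hypothesis $[x^+_{i,p},x^-_{i,q}]=h_{i,p+q}$ for $p+q\le k$ together with the off-diagonal \eqref{eq1.5} from Lemma~\ref{Lemma29}(2), one first shows that the \eqref{eq1.4}-type relations between $h_{i,\bullet}$ and $x^\pm_{i+1,\bullet}$ hold to the required level. This permits building $\widetilde h_{i,k}=h_{i,k}+(\text{polynomial in lower }h_{i,t})$ with the clean actions $[\widetilde h_{i,k},x^+_{i+1,1}]=a_{i,i+1}x^+_{i+1,k+1}$ and $[\widetilde h_{i,k},x^-_{i+1,0}]=-a_{i,i+1}x^-_{i+1,k}$. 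Then
\[
[\widetilde h_{i,k},h_{i+1,1}]=[\widetilde h_{i,k},[x^+_{i+1,1},x^-_{i+1,0}]]
=a_{i,i+1}\bigl([x^+_{i+1,k+1},x^-_{i+1,0}]-[x^+_{i+1,1},x^-_{i+1,k}]\bigr)=0,
\]
the last step by the full \eqref{eq1.2} at the even node $i+1$ from Lemma~\ref{Lemma30}. Since $\widetilde h_{i,k}-h_{i,k}$ involves only lower $h_{i,t}$, already known to commute with $\widetilde h_{i+1,1}$, this yields $[\widetilde h_{i+1,1},h_{i,k}]=0$ and closes the induction. The independent input at each level is thus not \eqref{eq2.9} but the construction of $\widetilde h_{i,k}$, fed by the off-diagonal \eqref{eq1.5} and the loop relations at $i+1$.
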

\begin{proof}
\textup{(1)}. It is enough to check the equality $[x^\pm_{i,r}, x^\pm_{i,s}]=0$. We only show that $[x^+_{i,r}, x^+_{i,s}]=0$ holds. We can obtain $[x^-_{i,r}, x^-_{i,s}]=0$ in a similar way. We prove \eqref{eq1.5} holds by the induction on $k=r+s$. When $k=0$, it is nothing but \eqref{eq2.8}. Applying $\ad(\widetilde{h}_{i+1,1})$ to $[x^+_{i,0}, x^+_{i,0}]=0$, we obtain
\begin{equation*}
a_{i,i+1}([x^+_{i,1}, x^+_{i,0}]+[x^+_{i,0}, x^+_{i,1}])=0.
\end{equation*}
Since $[x^+_{i,1}, x^+_{i,0}]=[x^+_{i,0}, x^+_{i,1}]$, we have $[x^+_{i,1}, x^+_{i,0}]=[x^+_{i,0}, x^+_{i,1}]=0$.

Suppose that $[x^+_{i,r}, x^+_{i,s}]=0$ holds for all $r, s$ such that $r+s=k, k+1$. Applying $\ad(\widetilde{h}_{i+1,1})$ to $[x^+_{i,u}, x^+_{i,k+1-u}]=0$, we have
\begin{align}
[\widetilde{h}_{i+1,1}, [x^+_{i,u}, x^+_{i,k+1-u}]]=0.\label{eqt}
\end{align}
By Lemma~\ref{Lemma29} (4) and the induction hypothesis, we have
\begin{equation}
[\widetilde{h}_{i+1,1}, [x^+_{i,u}, x^+_{i,k+1-u}]]=a_{i,i+1}([x^+_{i,u+1}, x^+_{i,k+1-u}]+[x^+_{i,u}, x^+_{i,k+2-u}]).\label{eqs}
\end{equation}
Since $a_{i,i+1}\neq0$, we find the relation
\begin{equation}\label{eqautiongat}
[x^+_{i,u+1}, x^+_{i,k+1-u}]=-[x^+_{i,u}, x^+_{i,k+2-u}]
\end{equation}
by \eqref{eqt} and \eqref{eqs}. In particular, we obtain 
\begin{equation}\label{etn12}
[x^+_{i,u+2}, x^+_{i,k-u}]=[x^+_{i,u}, x^+_{i,k+2-u}]. 
\end{equation}
Applying $\ad(\widetilde{h}_{i+1,2})$ to $[x^+_{i,u}, x^+_{i,k-u}]=0$, we have
\begin{align}
[\widetilde{h}_{i+1,2}, [x^+_{i,u}, x^+_{i,k-u}]]=0\label{eqc}
\end{align}
by the induction hypothesis. By Lemma~\ref{Lemma29} (7), Lemma~\ref{Lem45} and the induction hypothesis, we have
\begin{equation}
[\widetilde{h}_{i+1,2}, [x^+_{i,u}, x^+_{i,k-u}]]=a_{i,i+1}([x^+_{i,u+2}, x^+_{i,k-u}]+[x^+_{i,u}, x^+_{i,k+2-u}]). \label{eqd}
\end{equation}
Since $a_{i,i+1}\neq0$, we obtain the relation
\begin{align}
[x^+_{i,u+2}, x^+_{i,k-u}]=-[x^+_{i,u}, x^+_{i,k+2-u}].\label{eqe}
\end{align}
by \eqref{eqc} and \eqref{eqd}. 
Since \eqref{eqe} and \eqref{etn12} are linearly independent, we have shown that $[x^+_{i,u}, x^+_{i,k+2-u}]=0$ holds. 

\textup{(2)} We prove the statement by the induction on $r+s=k$. When $k=0$, it is nothing but \eqref{eq2.8}. Suppose that $[x^+_{i,r}, x^-_{i,s}]=h_{i,r+s}$ for all $r, s$ such that $r+s\leq k$. Then, we have the following claim.
\begin{Claim}
\textup{(a)}\  For all $r,s$, we obtain
\begin{equation}\label{LABE1}
[h_{i, r+1}, x_{i+1, s}^+] - [h_{i, r}, x_{i+1, s+1}^+] 
	= a_{i,i+1} \dfrac{\varepsilon_1 + \varepsilon_2}{2} \{h_{i, r}, x_{i+1, s}^+\} 
	- b_{i,i+1} \dfrac{\varepsilon_1 - \varepsilon_2}{2} [h_{i, r}, x_{i+1, s}^+].
\end{equation}

\textup{(b)}\ For all $r+s=k-1$, we obtain
\begin{equation}\label{LABE2}
[h_{i, r+1}, x_{i+1, s}^-] - [h_{i, r}, x_{i+1, s+1}^-] 
	= - a_{i,i+1} \dfrac{\varepsilon_1 + \varepsilon_2}{2} \{h_{i, r}, x_{i+1, s}^-\} 
	- b_{i,i+1} \dfrac{\varepsilon_1 - \varepsilon_2}{2} [h_{i, r}, x_{i+1, s}^-].
\end{equation}
\end{Claim}
\begin{proof}
\textup{(a)}\ By the definition of $h_{i,r}$, we have
\begin{align*}
[h_{i, r+1}, x_{i+1, s}^+] - [h_{i, r}, x_{i+1, s+1}^+]=[[x^+_{i,r+1},x^-_{i,0}],x_{i+1, s}^+]-[[x^+_{i,r},x^-_{i,0}],x_{i+1, s+1}^+].
\end{align*}
By the Jacobi identity and Lemma~\ref{Lemma29} (4), we obtain
\begin{align*}
[h_{i, r+1}, x_{i+1, s}^+] - [h_{i, r}, x_{i+1, s+1}^+]=[\{[x^+_{i,r+1},x_{i+1, s}^+]-[x^+_{i,r},x_{i+1, s+1}^+]\},x^-_{i,0}].
\end{align*}
By Lemma~\ref{Lemma29} (4), we have
\begin{align*}
&[h_{i, r+1}, x_{i+1, s}^+] - [h_{i, r}, x_{i+1, s+1}^+]\\
&\qquad\qquad\qquad=[\pm a_{i,i+1}\dfrac{\varepsilon_1 + \varepsilon_2}{2} \{x_{i, r}^+, x_{i+1, s}^+\} - b_{i,i+1} \dfrac{\varepsilon_1 - \varepsilon_2}{2} [x_{i, r}^+, x_{i+1, s}^+],x^-_{i,0}].
\end{align*}
By Lemma~\ref{Lemma29} (4), we obtain
\begin{align*}
[h_{i, r+1}, x_{i+1, s}^+] - [h_{m, r}, x_{m+1, s+1}^+]=\pm a_{i,i+1}\dfrac{\varepsilon_1 + \varepsilon_2}{2} \{h_{i, r}, x_{i+1, s}^+\} - b_{i,i+1} \dfrac{\varepsilon_1 - \varepsilon_2}{2} [h_{i, r}, x_{i+1, s}^+].
\end{align*}
\textup{(b)}\ 
By the assumption that $[x^+_{i,p}, x^-_{i,q}]=h_{i,p+q}$ holds for all $p+q\leq k$, we have
\begin{align*}
[h_{i, r+1}, x_{i+1, s}^-] - [h_{i, r}, x_{i+1, s+1}^-]=[[x^+_{i,r},x^-_{i,1}],x_{i+1, s}^-]-[[x^+_{i,r},x^-_{i,0}],x_{i+1, s+1}^-]
\end{align*}
since $r+1\leq k$. Similar discussion to (a), we have
\begin{align*}
[h_{i, r+1}, x_{i+1, s}^-] - [h_{i, r}, x_{i+1, s+1}^-]=[x^+_{i,r},\{[x^-_{i,1},x_{i+1, s}^-]-[x^-_{i,0},x_{i+1, s+1}^-]\}].
\end{align*}
By Lemma~\ref{Lemma29} (4), we obtain
\begin{align*}
&[h_{i, r+1}, x_{i+1, s}^-] - [h_{i, r}, x_{i+1, s+1}^-]\\
&\qquad\qquad\qquad=[x^+_{i,r},-a_{i,i+1}\dfrac{\varepsilon_1 + \varepsilon_2}{2} \{x_{i, 0}^-, x_{i+1, s}^-\} - b_{i,i+1} \dfrac{\varepsilon_1 - \varepsilon_2}{2} [x_{i, 0}^-, x_{i+1, s}^-]].
\end{align*}
Then, by Lemma~\ref{Lemma29} (4), we have
\begin{align*}
[h_{i, r+1}, x_{i+1, s}^-] - [h_{i, r}, x_{i+1, s+1}^-]=-a_{i,i+1}\dfrac{\varepsilon_1 + \varepsilon_2}{2} \{h_{i, r}, x_{i+1, s}^-\} - b_{i,i+1} \dfrac{\varepsilon_1 - \varepsilon_2}{2} [h_{i, r}, x_{i+1, s}^-].
\end{align*}
\end{proof}
By the similar discussion to Lemma~1.4 in \cite{L}, there exists $\widetilde{h}_{i,k}$ such that
\begin{gather*}
\widetilde{h}_{i,k}=h_{i,k}+\text{polynomial of }\{h_{i,t}\mid0\leq t\leq k-1\},\\
[\widetilde{h}_{i,k}, x^+_{i+1,1}]=a_{i,i+1}x^+_{i+1,k+1},\quad[\widetilde{h}_{i,k}, x^-_{i+1,0}]=-a_{i,i+1}x^-_{i+1,k}.
\end{gather*}
\begin{Claim}
The following equation holds;
\begin{equation}
[\widetilde{h}_{i+1,1},h_{i,k}]=0.\label{eqi}
\end{equation}
\end{Claim}
\begin{proof}
By the assumption that $[x^+_{i,p}, x^-_{i,q}]=h_{i,k}$ holds for all $p+q\leq k$
we have
\begin{align*}
[\widetilde{h}_{i+1,1},h_{i,s}]=[[\widetilde{h}_{i+1,1},x^+_{i,s}],x^-_{i,0}]+[x^+_{i,s},[\widetilde{h}_{i+1,1},x^-_{i,0}]]=0
\end{align*}
for all $s<k$.
Thus, it is enough to show that $[\widetilde{h}_{i,k},h_{i+1,1}]=0$ holds. 
By the definition of $h_{i+1,1}$, we obtain
\begin{align}
[\widetilde{h}_{i,k},h_{i+1,1}]&=[\widetilde{h}_{i,k},[x^+_{i+1,1},x^-_{i+1,0}]]\nonumber\\
&=a_{i,i+1}[x^+_{i+1,k+1},x^-_{i+1,0}]-a_{i,i+1}[x^+_{i+1,1},x^-_{i+1,k}].
\end{align}
By Lemma~\ref{Lemma30}, it is equal to zero.
\end{proof}
Applying $\ad(\widetilde{h}_{i+1,1})$ to $[x^+_{i,r}, x^-_{i,k-r}]=h_{i,k}$, we obtain
\begin{equation}
[\widetilde{h}_{i+1,1},[x^+_{i,r}, x^-_{i,k-r}]]=[\widetilde{h}_{i+1,1},h_{i,k}]\label{eqg}
\end{equation}
by the induction hypothesis.  By Lemma~\ref{Lemma29} (4), we can rewrite \eqref{eqg} as
\begin{equation}
a_{i,i+1}([x^+_{i,r+1}, x^-_{i,k-r}]-[x^+_{i,r}, x^-_{i,k-r+1}])=[\widetilde{h}_{i+1,1},h_{i,k}]=0.\label{equationg}
\end{equation}
It is nothing but the statement.

\textup{(3)} We only show the statement for $+$. The other case is proven in a similar way. By (2), $[h_{i,r},x^+_{i,0}]$ is equal to $[[x^+_{i,r},x^-_{i,0}],x^+_{i,0}]$. By (1) and the Jacobi identity, we have
\begin{equation}
[[x^+_{i,r},x^-_{i,0}],x^+_{i,0}]=[x^+_{i,r},[x^-_{i,0},x^+_{i,0}]].\label{eqj}
\end{equation}
The right hand side of \eqref{eqj} is equal to $[x^+_{i,r},h_{i,0}]$.
By Lemma~\ref{Lemma92} (1), the right hand side is equal to zero since $a_{i,i}=0$.

\textup{(4)} It is enough to check the equality $[h_{i,r},x^\pm_{i,s}]=0$. We only show the statement for $+$. The other case is proven in a similar way. We prove by the induction on $s$. When $s=0$, it is nothing but (3). Suppose that $[h_{i,r},x^+_{i,s}]=0$ holds. Applying $\ad(\widetilde{h}_{i+1,1})$ to $[h_{i,r},x^+_{i,s}]=0$, we find the equality
\begin{equation}
[\widetilde{h}_{i+1,1},[h_{i,r},x^+_{i,s}]]=0\label{equationf}
\end{equation}
by the induction hypothesis. By the proof of (2), we obtain $[\widetilde{h}_{i+1,1},h_{i,n}]=0$. Thus, the right hand side of \eqref{equationf} is equal to $[h_{i,r},[\widetilde{h}_{i+1,1},x^+_{i,s}]]$. By Lemma~\ref{Lemma29} (4), we obtain
\begin{equation}
[h_{i,r},[\widetilde{h}_{i+1,1},x^+_{i,s}]]=a_{i,i+1}[h_{i,r},(x^+_{i,s+1}-\dfrac{\ve_1-\ve_2}{2}b_{i+1,i}x^+_{i,s})]. \label{equationc}
\end{equation}
By the induction hypothesis, the right hand side of \eqref{equationc} is equal to $a_{i,i+1}[h_{i,r},x^+_{i,s+1}]$. Since $a_{i,i+1}\neq0$, we obtain $[h_{i,r},x^+_{i,s+1}]=0$.

\textup{(5)} By (2), $[h_{i,r}, h_{i,s}]$ is equal to $[h_{i,r},[x^+_{i,s},x^-_{i,0}]]$. By the Jacobi identity, we have
\begin{equation*}
[h_{i,r},[x^+_{i,s},x^-_{i,0}]]=[[h_{i,r},x^+_{i,s}],x^-_{i,0}]+[x^+_{i,s},[h_{i,r},x^-_{i,0}]].
\end{equation*}
By (4), the right hand side is equal to zero. We have shown the relation $[h_{i,r}, h_{i,s}]=0$.
\end{proof}
We obtain the relation \eqref{eq1.5} by Lemma~\ref{Lemma29} (2), Lemma~\ref{Lemma30} (3), and Lemma~\ref{Lemma31} (1). We also find that the relation \eqref{eq1.2}  holds by Lemma~\ref{Lemma29} (2), Lemma~\ref{Lemma30} (2), and Lemma~\ref{Lemma31} (2).

In the same way as that of Theorem~1.2 in \cite{L}, we obtain the defining relations \eqref{eq1.4}, \eqref{eq1.1}, and \eqref{eq1.6}. Thus, we omit the proof.
\begin{Lemma}\label{Lemma32}
\textup{(1)} The relations \eqref{eq1.4} and \eqref{eq1.1} hold in $\widetilde{Y}_{\ve_1,\ve_2}(\widehat{\mathfrak{sl}}(m|n))$ when $i\neq j$.

\textup{(2)} The relation \eqref{eq1.6} holds for all $i,j\in I$ in $\widetilde{Y}_{\ve_1,\ve_2}(\widehat{\mathfrak{sl}}(m|n))$.
\end{Lemma}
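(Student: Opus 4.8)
The three families still to be checked are \eqref{eq1.4} and \eqref{eq1.1} for $i\neq j$, and the Serre relations \eqref{eq1.6} for all $i,j$. The inputs I would rely on are the relations already established in $\widetilde{Y}_{\ve_1,\ve_2}(\widehat{\mathfrak{sl}}(m|n))$: the full commutation rule \eqref{eq1.3} together with its level-one refinement \eqref{eq2.1.1} (Lemma~\ref{Lemma92}), the relations \eqref{eq1.2} and \eqref{eq1.5}, which are now valid for all indices by the discussion following Lemma~\ref{Lemma31}, the diagonal cases of \eqref{eq1.1} and \eqref{eq1.4} (Lemmas~\ref{Lemma30} and \ref{Lemma31}), and the low-level Serre relations of Lemma~\ref{Lemma29}(7). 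The single mechanism driving every induction is that $\ad(\widetilde{h}_{k,1})$ acts as a level-raising operator via \eqref{eq2.1.1}, combined with the reduction of any $h$-bracket to $x$-brackets through $h_{i,r}=[x^+_{i,r},x^-_{i,0}]$. Throughout, the whole argument follows the template of Theorem~1.2 of \cite{L}.

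For \eqref{eq1.4} with $i\neq j$ I would argue by induction on $r$. The case $r=0$ is checked directly by substituting \eqref{eq1.3} and the explicit value of $[h_{i,1},x^\pm_{j,s}]$ coming from \eqref{eq2.1.1} and $h_{i,1}=\widetilde{h}_{i,1}+\tfrac{\ve_1+\ve_2}{2}h_{i,0}^2$ into both sides; the case $(r,s)=(1,0)$ is Lemma~\ref{Lemma29}(5). For the inductive step I would write $h_{i,r+1}=[x^+_{i,r+1},x^-_{i,0}]$, which is legitimate since \eqref{eq1.2} is available, expand $[h_{i,r+1},x^\pm_{j,s}]$ by the super-Jacobi identity, and observe that for $i\neq j$ the opposite-sign bracket $[x^\mp_{i,0},x^\pm_{j,s}]$ vanishes by \eqref{eq1.2}. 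What remains is reduced by \eqref{eq1.5} and \eqref{eq1.2} to the generators, and comparison with the right-hand side of \eqref{eq1.4} is the direct computation of \cite{L}.

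For \eqref{eq1.1} with $i\neq j$, the key preliminary is $[\widetilde{h}_{i,1},h_{j,s}]=0$, which I would obtain exactly as in the Claim inside the proof of Lemma~\ref{Lemma31}(2): writing $h_{j,s}=[x^+_{j,s},x^-_{j,0}]$, applying \eqref{eq2.1.1} to each factor, and using \eqref{eq1.2} to see that the two resulting copies of $a_{i,j}\bigl(h_{j,s+1}-b_{i,j}\tfrac{\ve_1-\ve_2}{2}h_{j,s}\bigr)$ cancel. Since $[h_{i,0},h_{j,s}]=0$ follows in the same manner, the cases $r\le 1$ are settled. For general $r$ I would use the just-proved \eqref{eq1.4} to compute $[h_{i,r},x^+_{j,s}]$ and $[h_{i,r},x^-_{j,0}]$, substitute them into $[h_{i,r},h_{j,s}]=[[h_{i,r},x^+_{j,s}],x^-_{j,0}]+[x^+_{j,s},[h_{i,r},x^-_{j,0}]]$ (no signs appear since $h_{i,r}$ is even), and verify that the two contributions telescope to zero; equivalently one can bootstrap from $[\widetilde{h}_{i,1},h_{j,s}]=0$ by raising the index of $h_{i,\cdot}$ through its defining bracket.

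The genuinely delicate part, and what I expect to be the main obstacle, is the Serre relation \eqref{eq1.6}. Because $i\neq j$ forces $|a_{i,j}|\in\{0,1\}$, only the length-one relation $[x^\pm_{i,r},x^\pm_{j,s}]=0$ (when $a_{i,j}=0$) and the length-two relation $[x^\pm_{i,r_1},[x^\pm_{i,r_2},x^\pm_{j,s}]]+[x^\pm_{i,r_2},[x^\pm_{i,r_1},x^\pm_{j,s}]]=0$ (when $|a_{i,j}|=1$) occur, with base cases $(0,\dots,0)$, $(1,0,\dots,0)$, $(2,0,\dots,0)$ and $(1,1,0,\dots,0)$ supplied by Lemma~\ref{Lemma29}(7). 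To raise one index at a time I would apply $\ad(\widetilde{h}_{k,1})$, and occasionally $\ad(\widetilde{h}_{k,2})$, for a node $k$ adjacent to only one of $i,j$; such a node exists since $m,n\ge 2$ and $m\neq n$ force the cyclic Dynkin diagram to have length $m+n\ge 5$. By \eqref{eq2.1.1} and the Leibniz rule each such application produces a linear combination of Serre expressions with a single index raised, plus lower-order corrections. The real work is to gather enough of these relations and invert the resulting linear system to isolate each individual Serre expression, precisely as in \cite{L} and as already illustrated for the repeated-index version in Lemma~\ref{Lemma31}(1); this disentangling, rather than any one commutator computation, is where the difficulty lies.
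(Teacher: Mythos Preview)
Your proposal is correct and takes the same approach as the paper: both defer to Theorem~1.2 of \cite{L}, with the paper omitting all details and you supplying a reasonable sketch of how Levendorskii's inductions are to be run. The specific mechanisms you describe---reducing $h_{i,r+1}$ to $[x^+_{i,r+1},x^-_{i,0}]$ and using the Jacobi identity together with \eqref{eq1.2} and \eqref{eq1.5}, establishing $[\widetilde{h}_{i,1},h_{j,s}]=0$ as in the proof of Lemma~\ref{Lemma31}(2), and raising Serre indices by applying $\ad(\widetilde{h}_{k,1})$, $\ad(\widetilde{h}_{k,2})$ at a node adjacent to only one of $i,j$---are exactly the Levendorskii template the paper invokes.
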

We remark that the relation \eqref{eq1.1} holds by Lemma~\ref{Lemma30} (1), Lemma~\ref{Lemma31} (5), and Lemma~\ref{Lemma32} (1). We also find that the relation \eqref{eq1.4} holds by Lemma~\ref{Lemma30} (4), Lemma~\ref{Lemma31} (4), and Lemma~\ref{Lemma32} (1).

Now, it is enough to show that \eqref{eq1.7} and \eqref{eq1.8} are deduced from \eqref{eq2.1}-\eqref{eq2.9}. However, we have already obtained \eqref{eq1.7},  since \eqref{eq1.7} is equivalent to \eqref{eq1.5} when $i=j=0,m$.
Thus, to accomplish the proof, we only need to show that \eqref{eq1.8} holds. 
\begin{Lemma}\label{Lemma33}
The relation \eqref{eq1.8} holds for $i=0,m$ in $\widetilde{Y}_{\ve_1,\ve_2}(\widehat{\mathfrak{sl}}(m|n))$.
\end{Lemma}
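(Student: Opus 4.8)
We must show that the Serre-type relation
\begin{equation*}
[[x^\pm_{i-1,0},x^\pm_{i,0}],[x^\pm_{i,0},x^\pm_{i+1,0}]]=0
\end{equation*}
for $i=0,m$ (the two odd nodes) is recovered in $\widetilde{Y}_{\ve_1,\ve_2}(\widehat{\mathfrak{sl}}(m|n))$ from the minimalistic relations \eqref{eq2.1}--\eqref{eq2.9}. Let me think about how I'd actually attack this.
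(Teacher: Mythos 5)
There is a genuine gap: your proposal contains no argument at all --- it stops after restating the goal. Worse, the goal as you state it is not the content of the lemma. Relation \eqref{eq1.8} reads $[[x^\pm_{i-1,r},x^\pm_{i,0}],[x^\pm_{i,0},x^\pm_{i+1,s}]]=0$ for \emph{all} $r,s\in\mathbb{Z}_{\geq0}$; the case $r=s=0$ that you wrote down is literally the defining relation \eqref{eq2.9} of $\widetilde{Y}_{\ve_1,\ve_2}(\widehat{\mathfrak{sl}}(m|n))$, so for that case there is nothing to prove. The whole point of the lemma is to propagate the relation from $(r,s)=(0,0)$ to arbitrary $(r,s)$.

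The missing idea is an induction on $k=r+s$. Assuming the relation holds at level $k$, one applies $\ad(\widetilde{h}_{j,1})$ for $j=i\pm2$ to $[[x^\pm_{i-1,r},x^\pm_{i,0}],[x^\pm_{i,0},x^\pm_{i+1,s}]]=0$ and uses the already-established formula $[\widetilde{h}_{j,1}, x^\pm_{l,r}] = \pm a_{j,l}\bigl(x^\pm_{l,r+1}-b_{j,l}\tfrac{\ve_1-\ve_2}{2}x^\pm_{l,r}\bigr)$ (Lemma~\ref{Lemma92}~(2)). The key structural fact is that for $j=i-2$ one has $a_{j,i}=a_{j,i+1}=0$ while $a_{j,i-1}\neq0$, so the adjoint action touches only the factor $x^\pm_{i-1,r}$ and (after discarding the lower-order term by the induction hypothesis) yields $a_{i-2,i-1}[[x^\pm_{i-1,r+1},x^\pm_{i,0}],[x^\pm_{i,0},x^\pm_{i+1,s}]]=0$; since $a_{i-2,i-1}\neq0$, this raises $r$ by one. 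Symmetrically, $\ad(\widetilde{h}_{i+2,1})$ raises $s$ by one. Together these cover all $(r,s)$ with $r+s=k+1$, completing the induction. (Note this argument needs the nodes $i-2,i-1,i,i+1,i+2$ to be distinct, which holds since $m,n\geq2$ and $m\neq n$ force $m+n\geq5$.) Without this induction step --- or some substitute for it --- the lemma is simply unproved.
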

\begin{proof}
We prove by the induction on $k=r+s$. When $k=0$, it is nothing but \eqref{eq2.9}. Suppose that \eqref{eq2.9} holds for all $r, s$ such that $r+s=k$. Applying $\ad(\widetilde{h}_{i+2,1})$ to $[[x^\pm_{i-1,r},x^\pm_{i,0}],[x^\pm_{i,0},x^\pm_{i+1,s}]]=0$, we obtain
\begin{equation*}
a_{i-2,i-1}[[x^\pm_{i-1,r+1},x^\pm_{i,0}],[x^\pm_{i,0},x^\pm_{i+1,s}]]=0.
\end{equation*}
Similarly, Applying $\ad(\widetilde{h}_{i+2,1})$ to $[[x^\pm_{i-1,r},x^\pm_{i,0}],[x^\pm_{i,0},x^\pm_{i+1,s}]]=0$, we have
\begin{equation*}
a_{i+2,i+1}[[x^\pm_{i-1,r},x^\pm_{i,0}],[x^\pm_{i,0},x^\pm_{i+1,s+1}]]=0.
\end{equation*}
Thus, we have shown that \eqref{eq1.8} holds for all $r, s$ such that $r+s=k+1$. 
\end{proof}
This completes the proof of Theorem~\ref{Mini}.

By Theorem~\ref{Mini}, we also obtain the minimalistic presentation of $Y_{\ve_1,\ve_2}(\widetilde{\mathfrak{sl}}(m|n))$.
\begin{Theorem}\label{Mimi}
Suppose that $m, n\geq2$ and $m\neq n$. Then, $Y_{\ve_1,\ve_2}(\widetilde{\mathfrak{sl}}(m|n))$ is isomorphic to the super algebra generated by $x_{i,r}^{+}, x_{i,r}^{-}, h_{i,r}$ $(i \in \{0,1,\cdots,m+n-1\}, r = 0,1)$ subject to the defining relations \eqref{eq2.1}-\eqref{eq2.9} and
\begin{gather}
[d,h_{i,r}]=0,\quad[d,x^+_{i,r}]=\begin{cases}
x^+_{i,r}&\text{ if }i=0,\\
0&\text{ if }i\neq0,
\end{cases}\quad[d,x^-_{i,r}]=\begin{cases}
-x^-_{i,r}&\text{ if }i=0,\\
0&\text{ if }i\neq0,
\end{cases}\label{eq2.10}
\end{gather}
where the generators $x^\pm_{m, r}$ and $x^\pm_{0, r}$ are odd and all other generators are even.
\end{Theorem}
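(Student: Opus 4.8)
The plan is to run the argument of Theorem~\ref{Mini} with the derivation $d$ adjoined, so that the only genuinely new content is the family of relations \eqref{eq1.9}. Denote by $\widetilde{Y}_{\ve_1,\ve_2}(\widetilde{\mathfrak{sl}}(m|n))$ the superalgebra presented in Theorem~\ref{Mimi}, and construct elements $x^\pm_{i,r}$, $h_{i,r}$ for $r\geq2$ inside it by the same inductive formulas \eqref{eq1297} and \eqref{eq1298}. These formulas involve only $x^\pm_{i,0}, x^\pm_{i,1}, h_{i,0}, h_{i,1}$ and the elements $\widetilde{h}_{i,1}$, so $d$ takes no part in the construction. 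Because the relations \eqref{eq2.1}-\eqref{eq2.9} hold in $\widetilde{Y}_{\ve_1,\ve_2}(\widetilde{\mathfrak{sl}}(m|n))$, every deduction in the proof of Theorem~\ref{Mini} applies verbatim, and hence relations \eqref{eq1.1}-\eqref{eq1.8} are automatically satisfied. It therefore remains only to verify \eqref{eq1.9} for all $r$, namely that $[d, h_{i,r}]=0$ and that $[d, x^\pm_{i,r}]=\pm\delta_{i,0}\,x^\pm_{i,r}$.

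Granting \eqref{eq1.9}, the isomorphism follows exactly as in Theorem~\ref{Mini}. The assignment on the defining generators extends to an algebra homomorphism $\Phi\colon \widetilde{Y}_{\ve_1,\ve_2}(\widetilde{\mathfrak{sl}}(m|n)) \to Y_{\ve_1,\ve_2}(\widetilde{\mathfrak{sl}}(m|n))$, since \eqref{eq2.1}-\eqref{eq2.10} are special cases of the defining relations \eqref{eq1.1}-\eqref{eq1.9}; conversely, sending each $x^\pm_{i,r}, h_{i,r}$ to the inductively constructed element and $d$ to $d$ yields a homomorphism $\Psi$ in the reverse direction, well defined precisely because \eqref{eq1.1}-\eqref{eq1.9} hold in $\widetilde{Y}_{\ve_1,\ve_2}(\widetilde{\mathfrak{sl}}(m|n))$. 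Both algebras are generated by $x^\pm_{i,r}, h_{i,r}\ (r=0,1)$ together with $d$, all of which are fixed by $\Psi\circ\Phi$ and $\Phi\circ\Psi$, so these composites are the respective identities.

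The verification of \eqref{eq1.9} is a short induction on $r$, which I expect to be the only place needing a new computation. The base cases $r=0,1$ are exactly \eqref{eq2.10}. The key preliminary observation is that $[d, \widetilde{h}_{i,1}]=0$: indeed $\widetilde{h}_{i,1}=h_{i,1}-\tfrac{\ve_1+\ve_2}{2}h_{i,0}^2$ is a polynomial in $h_{i,0}$ and $h_{i,1}$, both annihilated by $\ad(d)$ by \eqref{eq2.10}. Applying $\ad(d)$ to \eqref{eq1297} and \eqref{eq1298} and using the inductive hypothesis $[d, x^\pm_{i,r}]=\pm\delta_{i,0}\,x^\pm_{i,r}$ then yields $[d, x^\pm_{i,r+1}]=\pm\delta_{i,0}\,x^\pm_{i,r+1}$, since $\ad(d)$ kills $\widetilde{h}_{i,1}$ and rescales the residual $x^\pm_{i,r}$ terms by the same eigenvalue. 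For the Cartan part, applying $\ad(d)$ to $h_{i,r+1}=[x^+_{i,r+1}, x^-_{i,0}]$ and the super Leibniz rule (with $d$ even) gives $[d, h_{i,r+1}]=\delta_{i,0}[x^+_{i,r+1}, x^-_{i,0}]-\delta_{i,0}[x^+_{i,r+1}, x^-_{i,0}]=0$, completing the induction. There is no serious obstacle here, as the heavy lifting already resides in Theorem~\ref{Mini}; the one point demanding care is that the $d$-eigenvalues add correctly under bracketing, in particular with respect to the super-commutator sign conventions attached to the odd generators $x^\pm_0$ and $x^\pm_m$.
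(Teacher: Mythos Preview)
Your proposal is correct and takes essentially the same approach as the paper: the paper merely remarks that \eqref{eq1.9} follows from \eqref{eq2.10} ``in a similar way to the one of Lemma~\ref{Lemma92}'' and omits the details, which is precisely the induction on $r$ using the recursive definitions \eqref{eq1297}--\eqref{eq1298} that you have written out. Your explicit observation that $[d,\widetilde{h}_{i,1}]=0$ and the eigenvalue bookkeeping under $\ad(d)$ fill in exactly the steps the paper leaves implicit.
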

The relation \eqref{eq1.9} is derived from \eqref{eq2.10} in a similar way to the one of Lemma~\ref{Lemma92}. We omit the proof.
\section{Coproduct for the Affine Super Yangian}
In this section, we define the coproduct for the affine super Yangian $Y_{\ve_1,\ve_2}(\widehat{\mathfrak{sl}}(m|n))$. We recall the definition of standard degreewise completion (see \cite{MNT}).
\begin{Definition}\label{Defin}
Let $A=\bigoplus_{i\in\mathbb{Z}}\limits A(i)$ be a graded algebra. For all $i\in\mathbb{Z}$, we set a topology on $A(i)$ such that for $a\in A(i)$ the set
\begin{equation*}
\{a+\sum_{r>N}\limits A(i-r)\cdot A(r)\mid N\in\mathbb{Z}_{\geq0}\}
\end{equation*}
forms a fundamental system of open neighborhoods of $a$. The standard degreewise completion of $A$ is $\bigoplus_{i\in\mathbb{Z}}\limits\widehat{A}(i)$ where $\widehat{A}(i)$ is the completion of the space $A(i)$. By the definition of $\widehat{A}(i)$, we find that
\begin{equation*}
\widehat{A}=\bigoplus_{i\in\mathbb{Z}}\limits\plim[N] A(i)/\sum_{r>N}\limits A(i-r)\cdot A(r).
\end{equation*}
\end{Definition}
Let us set the degree on $Y_{\ve_1,\ve_2}(\widehat{\mathfrak{sl}}(m|n))$ determined by
\begin{align}
\text{deg}(h_{i,r})=0,\quad\text{deg}(x^+_{i,r})=\begin{cases}
1\text{ if }i=0,\\
0\text{ if }i\neq0,
\end{cases}\quad\text{deg}(x^-_{i,r})=\begin{cases}
-1&\text{ if }i=0,\\
0&\text{ if }i\neq0.
\end{cases}\label{degree}
\end{align}
Then,  $Y_{\ve_1,\ve_2}(\widehat{\mathfrak{sl}}(m|n))$ and $Y_{\ve_1,\ve_2}(\widehat{\mathfrak{sl}}(m|n))^{\otimes 2}$ become the graded algebra.
We define $\widehat{Y}_{\ve_1,\ve_2}(\widehat{\mathfrak{sl}}(m|n))$ (resp. $Y_{\ve_1,\ve_2}(\widehat{\mathfrak{sl}}(m|n)) \widehat{\otimes}  Y_{\ve_1,\ve_2}(\widehat{\mathfrak{sl}}(m|n))$) as the standard degreewise completion of $Y_{\ve_1,\ve_2}(\widehat{\mathfrak{sl}}(m|n))$ (resp. $Y_{\ve_1,\ve_2}(\widehat{\mathfrak{sl}}(m|n))^{\otimes 2}$) in the sense of Definition~\ref{Defin}. 

We prepare some notations.
There exists a homomorphism from $\widetilde{\mathfrak{sl}}(m|n)$ to $Y_{\ve_1,\ve_2}(\widetilde{\mathfrak{sl}}(m|n))$ determined by $\Phi(h_i)=h_{i,0},\ \Phi(x^\pm_i)=x^\pm_{i,0}$, and $\Phi(d)=d$. We sometimes denote $\Phi(x)$ by $x$ in order to simplify the notation. In particular, we denote $\Phi(x^p_\alpha)$ by $x^p_\alpha$ for all $\alpha\in\Delta$. By Theorem~\ref{thm:main}, we note that $\text{dim}(\Phi(\mathfrak{g}_\alpha))=1$ for all $\alpha\in\Delta_{\text{re}}$.
\begin{Theorem}\label{Maim}
The linear map
${\Delta} \colon Y_{\ve_1,\ve_2}(\widehat{\mathfrak{sl}}(m|n))\rightarrow Y_{\ve_1,\ve_2}(\widehat{\mathfrak{sl}}(m|n)) \widehat{\otimes}  Y_{\ve_1,\ve_2}(\widehat{\mathfrak{sl}}(m|n))$
uniquely determined by
\begin{gather}
{\Delta}(h_{i,0})={h_{i,0}}{\otimes}1+1{\otimes}{h_{i,0}},\quad{\Delta}(x^{\pm}_{i,0})={x^{\pm}_{i,0}}{\otimes}1+1{\otimes}x^{\pm}_{i,0},\nonumber\\
{\Delta}(h_{i,1})={h_{i,1}}{\otimes}1+1{\otimes}{h_{i,1}}+(\ve_1+\ve_2){h_{i,0}}{\otimes}{h_{i,0}}-(\ve_1+\ve_2)\sum_{{\alpha}\in{\Delta}_{+}}\sum_{1\leq k_\alpha\leq\text{dim}\mathfrak{g}_\alpha}(\alpha, {\alpha}_i)x^{k_\alpha}_{-\alpha}{\otimes}x^{k_\alpha}_{\alpha}\label{Coproduct}
\end{gather}
is an algebra homomorphism. Moreover, $\Delta$ satisfies the coassociativity. 
\end{Theorem}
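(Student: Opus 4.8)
The plan is to exploit the minimalistic presentation of Theorem~\ref{Mini}. Since $Y_{\ve_1,\ve_2}(\widehat{\mathfrak{sl}}(m|n))$ is generated by $h_{i,0},h_{i,1},x^\pm_{i,0}$ (the generators $x^\pm_{i,1}$ being expressible through \eqref{eq1297}-\eqref{eq1298}), the formulas \eqref{Coproduct} already pin down $\Delta$ on a generating set, so uniqueness is immediate; the content is existence, i.e.\ that the proposed images satisfy each defining relation \eqref{eq2.1}-\eqref{eq2.9}. First I would record the image of $\widetilde{h}_{i,1}=h_{i,1}-\tfrac{\ve_1+\ve_2}{2}h_{i,0}^2$. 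A one-line computation shows the $h_{i,0}\otimes h_{i,0}$ contributions cancel, leaving the clean expression $\Delta(\widetilde{h}_{i,1})=\widetilde{h}_{i,1}\otimes1+1\otimes\widetilde{h}_{i,1}-(\ve_1+\ve_2)\Omega_i$, where $\Omega_i:=\sum_{\alpha\in\Delta_+}\sum_{1\le k_\alpha\le\dim\mathfrak{g}_\alpha}(\alpha,\alpha_i)\,x^{k_\alpha}_{-\alpha}\otimes x^{k_\alpha}_{\alpha}$. The forced image $\Delta(x^\pm_{i,1})$ is then computed from $\Delta(\widetilde{h}_{i,1})$ and $\Delta(x^\pm_{i,0})$ by applying \eqref{eq1297} (for $i\neq 0,m$) or \eqref{eq1298} (for $i=0,m$).

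Next I would dispose of the relations involving only the degree-zero generators, together with the Serre-type relations \eqref{eq2.2}, \eqref{eq2.4} at $r=0$, and \eqref{eq2.7}-\eqref{eq2.9}. On the degree-zero generators $\Delta$ is simply the primitive map $z\mapsto z\otimes1+1\otimes z$, which is the restriction of the standard cocommutative coproduct of the superbialgebra $U(\widehat{\mathfrak{sl}}(m|n))$; as that is an algebra homomorphism, every Lie-superalgebra relation of $\widehat{\mathfrak{sl}}(m|n)$ is automatically preserved, with all Koszul signs handled at the $U(\widehat{\mathfrak{sl}}(m|n))$ level. At the same time I must check that the infinite sum $\Omega_i$ genuinely lies in the degreewise completion of Definition~\ref{Defin}: organizing $\Delta_+$ into the families $\beta+k\delta$ and using the grading \eqref{degree}, the summand $x^{k_\alpha}_{-\alpha}\otimes x^{k_\alpha}_{\alpha}$ sits in degree $0$ but in an increasingly deep filtration piece as the $\delta$-height of $\alpha$ grows, so $\Delta(h_{i,1})\in Y_{\ve_1,\ve_2}(\widehat{\mathfrak{sl}}(m|n))\widehat{\otimes}Y_{\ve_1,\ve_2}(\widehat{\mathfrak{sl}}(m|n))$ is well defined.

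The heart of the argument is the behaviour of $\Omega_i$ under the adjoint action of the primitive elements. I would compute $[\Delta(x^\pm_{j,0}),\Omega_i]$ and $[\Delta(h_{j,0}),\Omega_i]$ directly from the defining property $\kappa(x^p_\alpha,x^q_{-\alpha})=\delta_{p,q}$ of the dual bases, which (as in Guay--Nakajima--Wendlandt \cite{GNW}) expresses these commutators through Cartan-valued terms; the only new feature is the super sign $(-1)^{p(\alpha)}$ attached to each odd root vector. Feeding these in, I would verify in order: \eqref{eq2.4} at $r=1$; the consistency of \eqref{eq2.5}, namely that the $\Delta(x^\pm_{i,1})$ obtained from different indices agree and that \eqref{eq2.5} itself holds; the coupling relation \eqref{eq2.3}; and finally the two genuinely hard relations, the quadratic current relation \eqref{eq2.6} and the commutativity $[\Delta(h_{i,1}),\Delta(h_{j,1})]=0$ inside \eqref{eq2.1}. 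These last two are where the exact coefficient $(\alpha,\alpha_i)$ in $\Omega_i$ must conspire with the parity signs to cancel; I expect \eqref{eq2.6} to be the main obstacle, and I would model the computation on \cite{GNW}, inserting parity signs at each odd root vector and using $[x^\pm_{0,0},x^\pm_{0,0}]=0$ and $[x^\pm_{m,0},x^\pm_{m,0}]=0$ to control the extra terms produced by the odd simple roots.

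Finally I would check coassociativity on the generators $h_{i,0},x^\pm_{i,0},h_{i,1}$. For the primitive generators $(\Delta\otimes\id)\Delta=(\id\otimes\Delta)\Delta$ is immediate, and for $h_{i,1}$ both sides expand to the same fully symmetric triple expression: the quadratic $h_{i,0}\otimes h_{i,0}$ pieces match once written through the split quadratic Casimir, and the cubic correction $-(\ve_1+\ve_2)\sum_{\alpha}(\alpha,\alpha_i)(\cdots)$ is placed symmetrically across the three tensor factors. This reduces once more to the invariance computations of the third step, so no new difficulty arises beyond the sign bookkeeping already carried out there.
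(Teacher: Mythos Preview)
Your outline is essentially the paper's: reduce to the minimalistic presentation, observe that the degree-zero relations are inherited from the Hopf structure on $U(\widehat{\mathfrak{sl}}(m|n))$, and then verify the remaining relations by computing the interaction of the correction term with the adjoint action, following \cite{GNW} with super signs inserted. The paper packages this a bit differently --- it introduces the Drinfeld-type elements $J(h_i)=h_{i,1}+v_i$ and $J(x^\pm_i)=x^\pm_{i,1}+w^\pm_i$ and shows (Lemma~\ref{Lemma39}) that \eqref{eq2.3}, \eqref{eq2.5}, \eqref{eq2.6} are equivalent to the $J$-relations \eqref{3.3.4}, \eqref{3.3.2}, \eqref{3.3.3}, for which $\Delta$-compatibility is then checked exactly as in \cite{GNW} --- but this is a repackaging rather than a different strategy.

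Where your assessment diverges from the paper is in locating the main obstacle. You flag \eqref{eq2.6} as the hard step; in fact, once one passes to the $J$-formulation, \eqref{eq2.6} (equivalently \eqref{3.3.3}) falls out of a direct manipulation using Lemma~\ref{Kac-thm}. The genuinely delicate relation is $[\Delta(h_{i,1}),\Delta(h_{j,1})]=0$. Establishing this requires the new technical input of Proposition~\ref{Prop1}: for every positive \emph{real} root $\alpha$ one has
\[
(\alpha_j,\alpha)[J(h_i),x_{\alpha}]-(\alpha_i,\alpha)[J(h_j),x_{\alpha}]=c_{i,j}^\alpha x_{\alpha},\qquad c_{i,j}^{-\alpha}=-c_{i,j}^{\alpha}.
\]
The proof of this splits into even and odd roots; the even case is handled by transporting along the reflection operators $\tau_i$ of \eqref{al32} (which exist only for $i\neq 0,m$), together with the explicit formula of Lemma~\ref{Lem35} for $\tau_i(J(h_j))$. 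The odd case needs a separate ad hoc argument (Claim~\ref{Claim32}) precisely because there are no reflections through the odd simple roots $\alpha_0,\alpha_m$. Your proposal to ``insert parity signs and use $[x^\pm_{0,0},x^\pm_{0,0}]=0$, $[x^\pm_{m,0},x^\pm_{m,0}]=0$'' does not by itself supply this; you should expect to need the analogue of Proposition~\ref{Prop1} and its two-case proof before the cancellation in \eqref{JhJh} goes through.
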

The rest of this section is devoted to the proof of Theorem~\ref{Maim}.
The outline of the proof is similar to that of Theorem 4.9 of \cite{GNW}. In \cite{GNW}, the analogy of the Drinfeld $J$ presentation is considered in order to prove the existence of the coproduct for the affine Yangian. We construct elements similar to those constructed in (3.7) of \cite{GNW}
\begin{Definition} 
We set 
\begin{gather*}
J(h_i)=h_{i,1}+v_i,\qquad J(x^\pm_i)=x^\pm_{i,1}+w^\pm_{i},
\end{gather*}
where
\begin{gather*}
   v_i =\dfrac{\ve_1+\ve_2}{2} \sum_{\alpha\in\Delta_+}\sum_{\substack{1\leq k_\alpha\leq\text{dim}\mathfrak{g}_\alpha}} (\alpha,\alpha_i) 
  x^{k_\alpha}_{-\alpha} x^{k_\alpha}_{\alpha} - \dfrac{\ve_1+\ve_2}{2} h_i^2,\\
  w^+_i=-\dfrac{\ve_1+\ve_2}{2} \sum_{\alpha\in\Delta_+}\sum_{\substack{1\leq k_\alpha\leq\text{dim}\mathfrak{g}_\alpha}}[x^+_i,x^{k_\alpha}_{-\alpha}]x^{k_\alpha}_{\alpha},\quad w^-_i =\dfrac{\ve_1+\ve_2}{2} \sum_{\alpha\in\Delta_+}\sum_{\substack{1\leq k_\alpha\leq\text{dim}\mathfrak{g}_\alpha}}x^{k_\alpha}_{-\alpha}[x^{k_\alpha}_{\alpha},x^-_i].
\end{gather*}
Then, $J(h_i)$ and $J(x^\pm_i)$ are elements of $\widehat{Y}_{\ve_1,\ve_2}(\widehat{\mathfrak{sl}}(m|n))$.
\end{Definition}
Next, we prove the similar results to Lemma 3.9 and Proposition 3.21 in \cite{GNW}. In fact, they are \eqref{3.3.1}-\eqref{3.3.4} and \eqref{992}. 
We prepare one lemma in order to obtain \eqref{3.3.1}-\eqref{3.3.4} and \eqref{992}. It is an analogy of Proposition 2.4 of \cite{Kac2}.
\begin{Lemma}[\cite{M}, Lemma 18.4.1]\label{Kac-thm}
For all $\alpha,\beta\in\Delta_+$, we obtain
\begin{equation*}
\sum_{1\leq k_\beta\leq\text{dim}\mathfrak{g}_\beta}[x^{k_\beta}_{\beta},z]\otimes x^{k_\beta}_{-\beta}=\sum_{1\leq k_\alpha\leq\text{dim}\mathfrak{g}_\alpha}x^{k_\alpha}_{\alpha}\otimes [z,x^{k_\alpha}_{-\alpha}]
\end{equation*}
if $z\in\mathfrak{g}_{\beta-\alpha}$.
\end{Lemma}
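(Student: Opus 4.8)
The plan is to derive Lemma~\ref{Kac-thm} from the non-degeneracy and invariance of the form $\kappa$, by testing both sides against an arbitrary decomposable tensor and invoking non-degeneracy. First I would record the weight bookkeeping. The hypothesis on $z$ ensures that $\ad(z)$ carries $\mathfrak{g}_\beta$ into $\mathfrak{g}_\alpha$ and $\mathfrak{g}_{-\alpha}$ into $\mathfrak{g}_{-\beta}$, so both sides of the asserted equality are homogeneous elements of $\mathfrak{g}_\alpha \otimes \mathfrak{g}_{-\beta}$. Since $\kappa$ restricts to non-degenerate pairings $\mathfrak{g}_\alpha \times \mathfrak{g}_{-\alpha}\to\mathbb{C}$ and $\mathfrak{g}_\beta\times\mathfrak{g}_{-\beta}\to\mathbb{C}$, the induced pairing $\kappa\otimes\kappa$ between $\mathfrak{g}_\alpha\otimes\mathfrak{g}_{-\beta}$ and $\mathfrak{g}_{-\alpha}\otimes\mathfrak{g}_\beta$ is non-degenerate. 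Hence it suffices to show that the two sides pair identically against every $u\otimes v$ with $u\in\mathfrak{g}_{-\alpha}$ and $v\in\mathfrak{g}_\beta$.

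The two elementary inputs are the reproducing identities dual to the chosen bases, both immediate from $\kappa(x^p_\gamma,x^q_{-\gamma})=\delta_{p,q}$: for $v\in\mathfrak{g}_\beta$ one has $v=\sum_{k_\beta}\kappa(v,x^{k_\beta}_{-\beta})\,x^{k_\beta}_\beta$, and for $u\in\mathfrak{g}_{-\alpha}$ one has $u=\sum_{k_\alpha}\kappa(x^{k_\alpha}_\alpha,u)\,x^{k_\alpha}_{-\alpha}$. Pairing the left-hand side of the lemma with $u\otimes v$, the factor $\kappa(x^{k_\beta}_{-\beta},v)$ lets one perform the $k_\beta$-summation by the first reproducing identity, collapsing the left side (up to a Koszul sign) to $\kappa([v,z],u)$. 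Pairing the right-hand side with $u\otimes v$, the factor $\kappa(x^{k_\alpha}_\alpha,u)$ lets one perform the $k_\alpha$-summation by the second reproducing identity, collapsing the right side (up to a Koszul sign) to $\kappa([z,u],v)$. The invariance $\kappa([v,z],u)=\kappa(v,[z,u])$ together with the supersymmetry of $\kappa$ identifies these two scalars, so the test-pairings agree and the lemma follows by non-degeneracy.

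The one delicate point, and where I would spend the most care, is the bookkeeping of the Koszul signs arising at three places: the transposition sign in the super pairing $\kappa\otimes\kappa$ on $\mathfrak{g}\otimes\mathfrak{g}$, the supersymmetry sign $(-1)^{p(a)p(b)}$ of $\kappa$, and the sign in the invariance relation. Because $u,v,z$ are homogeneous with $p(z)=p(\alpha)+p(\beta)$ and the spaces $\mathfrak{g}_{\pm\alpha},\mathfrak{g}_{\pm\beta}$ carry parities $p(\alpha),p(\beta)$, each of these signs can be computed once and for all; a short check shows the prefactor on the left is $(-1)^{p(\alpha)p(\beta)+p(\beta)}$ and that on the right is $(-1)^{p(\alpha)p(\beta)}$, while invariance plus supersymmetry supplies exactly the missing $(-1)^{p(\beta)}$, so all signs cancel and no scalar prefactor survives, in agreement with the stated identity. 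This argument is the direct super analogue of the invariance of the canonical element underlying Proposition 2.4 of \cite{Kac2}, and the statement is available as Lemma 18.4.1 of \cite{M}, which may alternatively be cited.
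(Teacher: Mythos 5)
Your argument is correct, but there is nothing in the paper to compare it against: the paper does not prove this lemma at all. It quotes it from the literature as Lemma 18.4.1 of \cite{M}, remarking only that it is the super analogue of Proposition 2.4 of \cite{Kac2}. What you have written is precisely the standard invariance-of-the-canonical-element argument behind those citations, and your sign bookkeeping checks out: testing against $u\otimes v$ with $u\in\mathfrak{g}_{-\alpha}$, $v\in\mathfrak{g}_{\beta}$ (either with the unsigned pairing $\kappa(a,u)\kappa(b,v)$, which is already non-degenerate on the relevant weight block, or with your Koszul-signed version), the left side collapses to $(-1)^{p(\beta)}\kappa([v,z],u)$ via $v=\sum_{k}\kappa(v,x^{k_\beta}_{-\beta})\,x^{k_\beta}_{\beta}$, the right side to $\kappa([z,u],v)$ via $u=\sum_{l}\kappa(x^{k_\alpha}_{\alpha},u)\,x^{k_\alpha}_{-\alpha}$, and since $p(v)=p([z,u])=p(\beta)$, invariance plus supersymmetry gives $\kappa([v,z],u)=\kappa(v,[z,u])=(-1)^{p(\beta)}\kappa([z,u],v)$, so the residual $(-1)^{p(\beta)}$ cancels exactly as you claim. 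Note also that the paper's normalization $\kappa(x^{p}_{\gamma},x^{q}_{-\gamma})=\delta_{p,q}$ already encodes the non-degeneracy of the block pairings you invoke, and the finite-dimensionality of the affine root spaces (including the imaginary ones, where $\dim\mathfrak{g}_{\alpha}>1$, which your sums over $k_\alpha$, $k_\beta$ handle) legitimizes the reproducing identities.

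One point you should make explicit rather than leave implicit: as printed, the hypothesis $z\in\mathfrak{g}_{\beta-\alpha}$ is weight-inconsistent with the displayed identity --- the two sides would lie in $\mathfrak{g}_{2\beta-\alpha}\otimes\mathfrak{g}_{-\beta}$ and $\mathfrak{g}_{\alpha}\otimes\mathfrak{g}_{\beta-2\alpha}$ respectively. Your reading, under which $\ad(z)$ carries $\mathfrak{g}_{\beta}$ into $\mathfrak{g}_{\alpha}$ and $\mathfrak{g}_{-\alpha}$ into $\mathfrak{g}_{-\beta}$, amounts to $z\in\mathfrak{g}_{\alpha-\beta}$; that is the version under which both sides land in $\mathfrak{g}_{\alpha}\otimes\mathfrak{g}_{-\beta}$, and it is the form actually used in the proof of Lemma~\ref{Lemma39}, where with $z=x^{+}_{j}$ the sums over $\alpha$ get re-indexed by $\alpha-\alpha_{j}$. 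So you have silently, and correctly, repaired a typo in the statement; flagging that repair would make the write-up complete.
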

\begin{Lemma}\label{Lemma39}
The following relations hold:
\begin{gather}
[J(h_i), h_j]=0,\label{3.3.1}\\
[J(h_i), x^{\pm}_{j}]=\pm({\alpha}_{i},{\alpha}_{j})J(x^{\pm}_{j})\mp a_{i,j}b_{i,j}\dfrac{\ve_1-\ve_2}{2}x^\pm_{j,0},\label{3.3.2}\\
[J(x^{\pm}_{i}),x^{\pm}_{j}]=[x^{\pm}_{i},J(x^{\pm}_{j})]-\dfrac{\ve_1-\ve_2}{2}b_{i,j}[x^\pm_{i,0},x^\pm_{j,0}],\label{3.3.3}\\
[J(x^{\pm}_{i}),x^{\mp}_{j}]=[x^{\pm}_{i},J(x^{\mp}_{j})]={\delta}_{i,j}J(h_i).\label{3.3.4}
\end{gather}
\end{Lemma}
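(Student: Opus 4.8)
\textbf{Proof proposal for Lemma~\ref{Lemma39}.}
The plan is to verify the four relations \eqref{3.3.1}--\eqref{3.3.4} by direct computation in the completion $\widehat{Y}_{\ve_1,\ve_2}(\widehat{\mathfrak{sl}}(m|n))$, expanding $J(h_i)$ and $J(x^\pm_i)$ into their Yangian part ($h_{i,1}$ or $x^\pm_{i,1}$) and their correction term ($v_i$ or $w^\pm_i$), and then matching the two sides term by term. The governing principle throughout will be that the correction terms $v_i$ and $w^\pm_i$ are built out of the image $\Phi(\widetilde{\mathfrak{sl}}(m|n))$, so their commutators with degree-zero generators $h_j=h_{j,0}$ and $x^\pm_j=x^\pm_{j,0}$ can be computed entirely inside $U(\widetilde{\mathfrak{sl}}(m|n))$ using only the Lie-superalgebra relations of Proposition~\ref{Prop2}, while the commutators involving $h_{i,1}$ and $x^\pm_{i,1}$ are controlled by the already-established relations \eqref{eq1.3}, \eqref{eq1.4}, \eqref{eq1.5}, \eqref{11111}. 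I would treat \eqref{3.3.1} first as the warm-up: since $v_i$ and $h_j$ both lie in the enveloping algebra and $[h_{i,1},h_{j,0}]=0$ by \eqref{eq1.1}, one reduces to showing $[v_i,h_j]=0$, which follows because $h_j$ acts diagonally and each summand $x^{k_\alpha}_{-\alpha}x^{k_\alpha}_\alpha$ is weight zero.

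For \eqref{3.3.2} I would compute $[h_{i,1},x^\pm_{j,0}]$ from the $r=0$ case of \eqref{eq1.4} (equivalently from \eqref{11111} together with the definition $\widetilde{h}_{i,1}=h_{i,1}-\frac{\ve_1+\ve_2}{2}h_{i,0}^2$), and separately compute $[v_i,x^\pm_j]$ inside the enveloping algebra; the quadratic piece $-\frac{\ve_1+\ve_2}{2}h_i^2$ in $v_i$ is exactly what is needed to convert the $\{h_{i,0},x^\pm_{j,0}\}$ appearing in \eqref{eq1.4} into the clean right-hand side, and the root-sum piece of $v_i$ should reproduce the $(\alpha_i,\alpha_j)w^\pm_j$ part of $J(x^\pm_j)$. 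Here the key computational input is Lemma~\ref{Kac-thm}, which lets me re-index the double root-sum so that the $\beta$-summation over $[x^\pm_j,x^{k_\alpha}_{-\alpha}]x^{k_\alpha}_\alpha$ reorganizes into $(\alpha,\alpha_j)$-weighted terms. The relations \eqref{3.3.3} and \eqref{3.3.4} are proved in the same spirit: for \eqref{3.3.3} I expand both $[J(x^\pm_i),x^\pm_j]$ and $[x^\pm_i,J(x^\pm_j)]$, use the $(1,0)$-case of \eqref{eq1.5} (Lemma~\ref{Lemma29}(3),(5)) to handle the $x^\pm_{i,1}$/$x^\pm_{j,1}$ terms, and show the difference of the two $w$-contributions collapses to the stated $-\frac{\ve_1-\ve_2}{2}b_{i,j}[x^\pm_{i,0},x^\pm_{j,0}]$ term; for \eqref{3.3.4} the symmetry between $w^+$ and $w^-$ (note the deliberate placement of the bracket in their definitions) should make the two expressions equal, and the coincidence with $\delta_{i,j}J(h_i)$ follows from \eqref{eq1.2} in the form $[x^+_{i,1},x^-_{j,0}]=\delta_{i,j}h_{i,1}$.

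Throughout, the manipulations with the infinite root-sums must be carried out in the degreewise completion, and the parities must be tracked with the convention $[x,y]=xy-(-1)^{p(x)p(y)}yx$ and $\{x,y\}=xy+yx$; in particular the odd roots $\alpha$ (those with $x^\pm_{0,r}$ or $x^\pm_{m,r}$ contributing) introduce sign factors $(-1)^{p(\alpha)}$ in the re-indexing steps that I expect will require the superized form of Lemma~\ref{Kac-thm}. I anticipate that the main obstacle is precisely the bookkeeping in \eqref{3.3.2} and \eqref{3.3.3}: namely, showing that after applying Lemma~\ref{Kac-thm} and collecting terms, the root-sum contributions from $v_i$ (resp.\ the difference $w^\pm_i$-terms) assemble exactly into $(\alpha_i,\alpha_j)w^\pm_j$ (resp.\ the single $b_{i,j}$-term), with all the $\ve_1\pm\ve_2$ coefficients and super-signs agreeing. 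The presence of the asymmetric parameter $b_{i,j}$ and the odd simple roots means these sign-and-coefficient checks are genuinely more delicate than in the non-super case of \cite{GNW}, so I would isolate the root-sum identity as a separate computation and verify it first before substituting into the four relations.
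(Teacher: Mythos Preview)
Your proposal is correct and follows essentially the same approach as the paper: decompose $J(h_i)$ and $J(x^\pm_i)$ into their degree-$1$ Yangian generator plus the correction term, handle the Yangian piece via \eqref{eq1.1}, \eqref{eq1.2}, \eqref{eq2.5}, \eqref{eq2.6}, and compute the correction-term contributions inside $U(\widetilde{\mathfrak{sl}}(m|n))$ using Lemma~\ref{Kac-thm} to re-index the root sums. The paper carries out exactly these computations (including the super-sign bookkeeping you anticipate), with the one minor clarification that in \eqref{3.3.3} the difference $[w^+_i,x^+_j]-[x^+_i,w^+_j]$ is shown to equal $-\tfrac{\ve_1+\ve_2}{2}a_{i,j}\{x^+_i,x^+_j\}$, which then cancels the anticommutator term coming from \eqref{eq2.6} to leave the stated $b_{i,j}$-term.
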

\begin{proof}
Since $h_{i,1}$ commutes with $h_j$ by \eqref{eq1.1} and $v_i$ commutes with $h_j$ by the definition of $v_i$, we obtain \eqref{3.3.1}. We only show the other relations hold for $+$. In a similar way, we obtain them for $-$. First, we prove \eqref{3.3.2} holds for $+$.
By \eqref{eq2.5}, the left hand side of \eqref{3.3.2} is equal to
\begin{align}
&\phantom{{}={}}[\widetilde{h}_{i,1}+v_i+\dfrac{\ve_1+\ve_2}{2}h_{i,0}^2, x^+_j]\nonumber\\
&=a_{i,j}(x^+_{j,1}-b_{i,j}\dfrac{\ve_1-\ve_2}{2}x^+_{j,0})+\big[\dfrac{\ve_1+\ve_2}{2} \sum_{\alpha,\beta\in\Delta_+}\sum_{\substack{1\leq k_\alpha\leq\text{dim}\mathfrak{g}_\alpha}} (\alpha,\alpha_i)x^{k_\alpha}_{-\alpha}x^{k_\alpha}_{\alpha}, x^+_j\big].\label{eq1432}
\end{align}
By direct computation, the second term of the right hand side of \eqref{eq1432} is equal to
\begin{align}
&\dfrac{\ve_1+\ve_2}{2} \sum_{\alpha\in\Delta_+}\sum_{\substack{1\leq k_\alpha\leq\text{dim}\mathfrak{g}_\alpha}} (\alpha,\alpha_i)x^{k_\alpha}_{-\alpha}[x^{k_\alpha}_{\alpha},x^+_j]\nonumber\\
&\qquad\qquad+\dfrac{\ve_1+\ve_2}{2} \sum_{\alpha\in\Delta_+}\sum_{\substack{1\leq k_\alpha\leq\text{dim}\mathfrak{g}_\alpha}}{(-1)}^{p(\alpha)p(\alpha_j)} (\alpha,\alpha_i)[x^{k_\alpha}_{-\alpha},x^+_j]x^{k_\alpha}_{\alpha}.\label{eq1433}
\end{align}
By Lemma~\ref{Kac-thm}, \eqref{eq1433} is equal to
\begin{align}
&\dfrac{\ve_1+\ve_2}{2} \sum_{\alpha\in\Delta_+}\sum_{\substack{1\leq k_\alpha\leq\text{dim}\mathfrak{g}_\alpha}} (\alpha-\alpha_j,\alpha_i)[x^+_j,x^{k_\alpha}_{-\alpha}]x^{k_\alpha}_{\alpha}\nonumber\\
&\qquad\qquad+\dfrac{\ve_1+\ve_2}{2} \sum_{\alpha\in\Delta_+}\sum_{\substack{1\leq k_\alpha\leq\text{dim}\mathfrak{g}_\alpha}} {(-1)}^{p(\alpha)p(\alpha_j)}(\alpha,\alpha_i)[x^{k_\alpha}_{-\alpha},x^+_j]x^{k_\alpha}_{\alpha}.\label{eq1434}
\end{align}
Since ${(-1)}^{p(\alpha)p(\alpha_j)}[x^{k_\alpha}_{-\alpha},x^+_j]+[x^+_j,x^{k_\alpha}_{-\alpha}]=0$ holds, the sum of the first and second terms of \eqref{eq1434} is equal to $-\dfrac{\ve_1+\ve_2}{2} \displaystyle\sum_{\alpha\in\Delta_+}\sum_{\substack{1\leq k_\alpha\leq\text{dim}\mathfrak{g}_\alpha}}\limits (\alpha_j,\alpha_i)[x^+_j,x^{k_\alpha}_{-\alpha}]x^{k_\alpha}_{\alpha}$. Thus, we obtain
\begin{align*}
[J(h_i), x^+_{j}]&=a_{i,j}(x^+_{j,1}-b_{i,j}\dfrac{\ve_1-\ve_2}{2}x^+_{j,0})-\dfrac{\ve_1+\ve_2}{2} \displaystyle\sum_{\alpha\in\Delta_+}\sum_{\substack{1\leq k_\alpha\leq\text{dim}\mathfrak{g}_\alpha}}\limits (\alpha_j,\alpha_i)[x^+_j,x^{k_\alpha}_{-\alpha}]x^{k_\alpha}_{\alpha}.
\end{align*}
Thus, we have obtained \eqref{3.3.2} for $+$.

Next, we show that \eqref{3.3.3} holds for $+$. By the definition of $J(x^+_i)$, $[J(x^+_i),x^+_j]-[x^+_i,J(x^+_j)]$ is equal to
\begin{align*}
[x^+_{i,1},x^+_{j,0}]-[x^+_{i,0},x^+_{j,1}]+[w^+_i,x^+_j]-[x^+_i,w^+_j].
\end{align*}
By \eqref{eq2.6}, $[x^+_{i,1},x^+_{j,0}]-[x^+_{i,0},x^+_{j,1}]$ is equal to $\dfrac{\ve_1+\ve_2}{2}a_{i,j}\{x^+_{i,0},x^+_{j,0}\}-\dfrac{\ve_1-\ve_2}{2}b_{i,j}[x^+_{i,0},x^+_{j,0}]$. By the definition of $w^+_i$, we obtain
\begin{align}
&\phantom{{}={}}[w^+_i,x^+_j]-[x^+_i,w^+_j]\nonumber\\
&=-\dfrac{\ve_1+\ve_2}{2} \sum_{\alpha\in\Delta_+}\sum_{\substack{1\leq k_\alpha\leq\text{dim}\mathfrak{g}_\alpha}}[x^+_i,x^{k_\alpha}_{-\alpha}][x^{k_\alpha}_{\alpha},x^+_j]\nonumber\\
&\quad-\dfrac{\ve_1+\ve_2}{2} \sum_{\alpha\in\Delta_+}\sum_{\substack{1\leq k_\alpha\leq\text{dim}\mathfrak{g}_\alpha}}{(-1)}^{p(\alpha)p(\alpha_j)}[[x^+_i,x^{k_\alpha}_{-\alpha}],x^+_j]x^{k_\alpha}_{\alpha}\nonumber\\
&\quad+\dfrac{\ve_1+\ve_2}{2} \sum_{\alpha\in\Delta_+}\sum_{\substack{1\leq k_\alpha\leq\text{dim}\mathfrak{g}_\alpha}}[x^+_i,[x^+_j,x^{k_\alpha}_{-\alpha}]]x^{k_\alpha}_{\alpha}\nonumber\\
&\quad+\dfrac{\ve_1+\ve_2}{2} \sum_{\alpha\in\Delta_+}\sum_{\substack{1\leq k_\alpha\leq\text{dim}\mathfrak{g}_\alpha}}{(-1)}^{p(\alpha)p(\alpha_i)+p(\alpha_j)p(\alpha_i)}[x^+_j,x^{k_\alpha}_{-\alpha}][x^+_i,x^{k_\alpha}_{\alpha}].\label{329}
\end{align}
By Lemma~\ref{Kac-thm}, we find the equality
\begin{align}
&\text{the first term of the right hand side of \eqref{329}}\nonumber\\
&\qquad=-\dfrac{\ve_1+\ve_2}{2} \sum_{\alpha\in\Delta_+}\sum_{\substack{1\leq k_\alpha\leq\text{dim}\mathfrak{g}_\alpha}}[x^+_i,[x^+_j,x^{k_\alpha}_{-\alpha}]]x^{k_\alpha}_{\alpha}+\dfrac{\ve_1+\ve_2}{2}[x^+_i, h_j]x^+_j.\label{eqn}
\end{align}
We also find the relation
\begin{align}
&\quad\text{the fourth term of the right hand side of \eqref{329}}\nonumber\\
&=\dfrac{\ve_1+\ve_2}{2} \sum_{\alpha\in\Delta_+}\sum_{\substack{1\leq k_\alpha\leq\text{dim}\mathfrak{g}_\alpha}}{(-1)}^{p(\alpha)p(\alpha_i)+p(\alpha_j)p(\alpha_i)}[x^+_j,[x^{k_\alpha}_{-\alpha},x^+_i]]x^{k_\alpha}_{\alpha}\nonumber\\
&\qquad\qquad\qquad\qquad+\dfrac{\ve_1+\ve_2}{2}{(-1)}^{p(\alpha_i)p(\alpha_j)}[x^+_j,h_i]x^+_i\label{eqo}
\end{align}
by Lemma~\ref{Kac-thm}. Applying \eqref{eqn} and \eqref{eqo} to \eqref{329}, we obtain
\begin{equation*}
[w^+_i,x^+_j]-[x^+_i,w^+_j]=\dfrac{\ve_1+\ve_2}{2}[x^+_i, h_j]x^+_j+\dfrac{\ve_1+\ve_2}{2}{(-1)}^{p(\alpha_i)p(\alpha_j)}[x^+_j,h_i]x^+_i. 
\end{equation*}
Since $m,n\geq2$, there exists no $i,j$ such that $a_{i,j}\neq0$ and $p(\alpha_i)p(\alpha_j)=1$. Thus, we obtain
\begin{gather*}
\dfrac{\ve_1+\ve_2}{2}[x^+_i, h_j]x^+_j+\dfrac{\ve_1+\ve_2}{2}{(-1)}^{p(\alpha_i)p(\alpha_j)}[x^+_j,h_i]x^+_i=-\dfrac{\ve_1+\ve_2}{2}a_{i,j}\{x^+_i,x^+_j\}.
\end{gather*}
Hence, we have obtained
\begin{equation*}
[J(x^+_i), x^+_j]-[x^+_i, J(x^+_j)]=-\dfrac{\ve_1-\ve_2}{2}b_{i,j}[x^+_{i,0},x^+_{j,0}].
\end{equation*}
Finally, we show that $[J(x^+_i),x^-_j]=\delta_{i,j}J(h_i)$ holds. By the definiton of $J(x^+_i)$, $[J(x^+_i),x^-_j]$ is equal to $[x^+_{i,1},x^-_{j,0}]+[w^+_{i},x^-_{j,0}]$. By \eqref{eq1.2}, $[x^+_{i,1},x^-_{j,0}]$ is $\delta_{i,j}h_{i,1}$. By direct computation, we have
\begin{align}
&\phantom{{}={}}[w^+_{i},x^-_{j,0}]\nonumber\\
&=-\dfrac{\ve_1+\ve_2}{2} \sum_{\alpha\in\Delta_+}\sum_{\substack{1\leq k_\alpha\leq\text{dim}\mathfrak{g}_\alpha}}[x^+_i,x^{k_\alpha}_{-\alpha}][x^{k_\alpha}_{\alpha},x^-_j]\nonumber\\
&\quad-\dfrac{\ve_1+\ve_2}{2} \sum_{\alpha\in\Delta_+}\sum_{\substack{1\leq k_\alpha\leq\text{dim}\mathfrak{g}_\alpha}}{(-1)}^{p(\alpha)p(\alpha_j)}[[x^+_i,x^{k_\alpha}_{-\alpha}],x^-_j]x^{k_\alpha}_{\alpha}.\label{eqk}
\end{align}
By Lemma~\ref{Kac-thm}, we have
\begin{align}
&\text{the first term of the right hand side of \eqref{eqk}}\nonumber\\
&\qquad=-\dfrac{\ve_1+\ve_2}{2} \sum_{\alpha\in\Delta_+}\sum_{\substack{1\leq k_\alpha\leq\text{dim}\mathfrak{g}_\alpha}}[x^+_i,[x^-_j,x^{k_\alpha}_{-\alpha}]]x^{k_\alpha}_{\alpha}-\dfrac{\ve_1+\ve_2}{2}\delta_{i,j}h_i^2.\label{eql}
\end{align}
By the Jacobi identity, we find the equality
\begin{align}
[x^+_i,[x^-_j,x^{k_\alpha}_{-\alpha}]]&=-{(-1)}^{p(\alpha)p(\alpha_j)}[x^+_i,[x^{k_\alpha}_{-\alpha},x^-_j]]\nonumber\\
&=-{(-1)}^{p(\alpha)p(\alpha_j)}[[x^+_i,x^{k_\alpha}_{-\alpha}],x^-_j]x^{k_\alpha}_{\alpha}-{(-1)}^{p(\alpha)p(\alpha_j)}{(-1)}^{p(\alpha)p(\alpha_i)}[x^{k_\alpha}_{-\alpha},[x^+_i,x^-_j]]x^{k_\alpha}_{\alpha}.\label{eqm}
\end{align}
Thus, we obtain
\begin{align*}
&\phantom{{}={}}[w^+_{i},x^-_{j,0}]\\
&=\dfrac{\ve_1+\ve_2}{2}\sum_{\alpha\in\Delta_+}\sum_{\substack{1\leq k_\alpha\leq\text{dim}\mathfrak{g}_\alpha}}{(-1)}^{p(\alpha)p(\alpha_j)}[[x^+_i,x^{k_\alpha}_{-\alpha}],x^-_j]x^{k_\alpha}_{\alpha}\\
&\quad+\dfrac{\ve_1+\ve_2}{2}\delta_{i,j}\sum_{\alpha\in\Delta_+}\sum_{\substack{1\leq k_\alpha\leq\text{dim}\mathfrak{g}_\alpha}}{(-1)}^{p(\alpha)p(\alpha_j)}{(-1)}^{p(\alpha)p(\alpha_j)}[x^{k_\alpha}_{-\alpha},[x^+_i,x^-_j]]x^{k_\alpha}_{\alpha}-\dfrac{\ve_1+\ve_2}{2}\delta_{i,j}h_i^2\\
&\quad-\dfrac{\ve_1+\ve_2}{2} \sum_{\alpha\in\Delta_+}\sum_{\substack{1\leq k_\alpha\leq\text{dim}\mathfrak{g}_\alpha}}{(-1)}^{p(\alpha)p(\alpha_j)}[[x^+_i,x^{k_\alpha}_{-\alpha}],x^-_j]x^{k_\alpha}_{\alpha}\\
&=\dfrac{\ve_1+\ve_2}{2}\sum_{\alpha\in\Delta_+}\sum_{\substack{1\leq k_\alpha\leq\text{dim}\mathfrak{g}_\alpha}}\delta_{i,j}(\alpha_i,\alpha)x^{k_\alpha}_{-\alpha}x^{k_\alpha}_{\alpha}-\dfrac{\ve_1+\ve_2}{2}\delta_{i,j}h_i^2,
\end{align*}
where the first equality is due to \eqref{eql} and the second equality is due to \eqref{eqm}. Then, we have shown that $[J(x^+_i),x^-_i]=\delta_{i,j}J(h_i)$. Similarly, we can obtain $[x^+_i, J(x^-_j)]=\delta_{i,j}J(h_i)$.  This completes the proof. 
\end{proof}
By \eqref{3.3.1}-\eqref{3.3.4}, we obtain the following convenient relation.
\begin{Corollary}\label{Cor297}
\textup{(1)}\ When $i\neq j, j\pm1$, $[J(x^\pm_i),x^\pm_j]=0$ holds.

\textup{(2)}\ Suppose that $j<i-1$. We have the following relation;
\begin{align*}
&\ad(J(x^\pm_i))\prod_{i+1\leq k\leq m+n-1}\ad(x^\pm_k)\prod_{0\leq k\leq j-1}\ad(x^\pm_k)(x^\pm_j)\\
&\qquad=\ad(x^\pm_i)\ad(J(x^\pm_{i+1}))\prod_{i+2\leq k\leq m+n-1}\ad(x^\pm_k)\prod_{0\leq k\leq j-1}\ad(x^\pm_k)(x^\pm_j)\\
&\qquad\qquad\qquad-b_{i,i+1}\dfrac{\ve_1-\ve_2}{2}\prod_{i\leq k\leq m+n-1}\ad(x^\pm_k)\prod_{0\leq k\leq j-1}\ad(x^\pm_k)(x^\pm_j).
\end{align*}
\textup{(3)}\ For all $\alpha\in\displaystyle\sum_{1\leq l \leq m+n-1}\limits\mathbb{Z}_{\geq0}\alpha_i$ and $x_{\pm\alpha}\in\mathfrak{g}_{\pm\alpha}$, there exists a number $d_{i,j}^\alpha$ such that
\begin{equation*}
(\alpha_j,\alpha)[J(h_i),x_{\pm\alpha}]-(\alpha_i,\alpha)[J(h_j),x_{\pm\alpha}]=\pm d_{i,j}^\alpha x_{\pm\alpha}.
\end{equation*}
\textup{(4)}\ Suppose that $j<i-1$. We have
\begin{align*}
&\quad[J(h_s),\prod_{i\leq k\leq m+n-1}\ad(x^\pm_k)\prod_{0\leq k\leq j-1}\ad(x^\pm_k)(x^\pm_j)]\\
&=\pm(\alpha_s,\alpha)\prod_{i\leq k\leq m+n-1}\ad(x^\pm_k)\prod_{0\leq k\leq j-1}\ad(x^\pm_k)J(x^\pm_j)\\
&\quad\pm c_2\prod_{i\leq k\leq m+n-1}\ad(x^\pm_k)\prod_{0\leq k\leq j-1}\ad(x^\pm_k)(x^\pm_j),
\end{align*}
where $\alpha=\displaystyle\sum_{i\leq k\leq m+n-1}\limits\alpha_k+\displaystyle\sum_{0\leq k\leq j}\limits \alpha_k$ and $c_2$ is a complex number.
\end{Corollary}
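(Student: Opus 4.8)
The plan is to derive all four assertions from the relations \eqref{3.3.1}--\eqref{3.3.4} of Lemma~\ref{Lemma39} together with the (super) Jacobi identity, establishing them in the order (1), (2), (4), (3), since each later part reuses the earlier ones. Throughout I use that, by Proposition~\ref{Prop2}, the index set $I$ is a cycle of length $m+n$, and that $m,n\ge 2$, $m\neq n$ force $m+n\ge 5$; hence for any node $i$ at least one of its two neighbours $i\pm1$ is non-adjacent to a prescribed node $j$ whenever $i\neq j,j\pm1$.

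For part (1), I fix a neighbour $k$ of $i$ with $k\notin\{j-1,j,j+1\}$, so that $(\alpha_k,\alpha_i)=a_{k,i}\neq 0$ while $a_{k,j}=b_{k,j}=0$. Relation \eqref{3.3.2} lets me solve for $J(x^\pm_i)$ as $\pm(\alpha_k,\alpha_i)^{-1}\bigl([J(h_k),x^\pm_i]\pm a_{k,i}b_{k,i}\tfrac{\ve_1-\ve_2}{2}x^\pm_{i,0}\bigr)$. Substituting into $[J(x^\pm_i),x^\pm_j]$ and expanding the leading term by the Jacobi identity gives $[[J(h_k),x^\pm_i],x^\pm_j]=[J(h_k),[x^\pm_i,x^\pm_j]]\mp[x^\pm_i,[J(h_k),x^\pm_j]]$. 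Both summands vanish: $[x^\pm_{i,0},x^\pm_{j,0}]=0$ by \eqref{eq1.6} since $a_{i,j}=0$, and $[J(h_k),x^\pm_j]=0$ by \eqref{3.3.2} because $a_{k,j}=b_{k,j}=0$; the residual $x^\pm_{i,0}$-term in $J(x^\pm_i)$ contributes $[x^\pm_{i,0},x^\pm_{j,0}]=0$ as well. Hence $[J(x^\pm_i),x^\pm_j]=0$.

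For part (2), I write the left-hand side as $[J(x^\pm_i),[x^\pm_{i+1,0},Y]]$ with $Y=\prod_{i+2\le k\le m+n-1}\ad(x^\pm_k)\prod_{0\le k\le j-1}\ad(x^\pm_k)(x^\pm_j)$, and observe that every index occurring in $Y$ is non-adjacent to $i$ (the chain avoids $i-1,i,i+1$ because $j<i-1$). Thus both $J(x^\pm_i)$ and $x^\pm_{i,0}$ commute with $Y$, by part (1) and by \eqref{eq1.6} respectively, so the Jacobi expansion collapses to $[[J(x^\pm_i),x^\pm_{i+1,0}],Y]$; now \eqref{3.3.3} replaces $[J(x^\pm_i),x^\pm_{i+1,0}]$ by $[x^\pm_{i,0},J(x^\pm_{i+1})]-\tfrac{\ve_1-\ve_2}{2}b_{i,i+1}[x^\pm_{i,0},x^\pm_{i+1,0}]$, and re-expanding (again using that $x^\pm_{i,0}$ commutes with $Y$) yields exactly the two terms on the right-hand side. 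Part (4) is then proved by induction on the length of the chain $Z$: peeling off the outer $\ad(x^\pm_i)$ and applying \eqref{3.3.2} produces a term $\pm(\alpha_s,\alpha_i)[J(x^\pm_i),W]$, where $W$ is the remaining chain; iterating part (2) pushes the symbol $J$ along the whole chain to the innermost factor $x^\pm_j$ (finishing with \eqref{3.3.3} at the adjacent step $j-1,j$), so that $[J(x^\pm_i),W]$ equals the chain with $J(x^\pm_j)$ inserted plus a scalar multiple of $Z$. Combining with the inductive formula for $[J(h_s),W]$, the two $J$-contributions add with weights $(\alpha_s,\alpha_i)$ and $(\alpha_s,\alpha-\alpha_i)$ to give the coefficient $(\alpha_s,\alpha)$, while all remaining terms are scalar multiples of $Z$.

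Finally, for part (3) I establish, by induction on the height of $\alpha\in\sum_{1\le l\le m+n-1}\mathbb{Z}_{\ge0}\alpha_l$, the generalised form of \eqref{3.3.2}, namely $[J(h_i),x_{\pm\alpha}]=\pm(\alpha_i,\alpha)J(x_{\pm\alpha})\pm s_i^\alpha x_{\pm\alpha}$, where the element $J(x_{\pm\alpha})$ does not depend on $i$ and $s_i^\alpha$ is a scalar. The base case $\alpha=\alpha_k$ is \eqref{3.3.2}; for the inductive step I write $x_\alpha=[x^\pm_{k,0},x_\beta]$ with $\beta=\alpha-\alpha_k$, apply Jacobi, use \eqref{3.3.2} on $x^\pm_{k,0}$, the induction hypothesis on $x_\beta$, and \eqref{3.3.3} to identify $[J(x^\pm_k),x_\beta]$ with $[x^\pm_{k,0},J(x_\beta)]$ up to a scalar multiple of $x_\alpha$, after which the two weight factors recombine to $(\alpha_i,\alpha_k+\beta)=(\alpha_i,\alpha)$. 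Granting this, the stated combination cancels the $i$-independent term $J(x_{\pm\alpha})$ and leaves $\pm\bigl((\alpha_j,\alpha)s_i^\alpha-(\alpha_i,\alpha)s_j^\alpha\bigr)x_{\pm\alpha}$, so $d_{i,j}^\alpha=(\alpha_j,\alpha)s_i^\alpha-(\alpha_i,\alpha)s_j^\alpha$ works. I expect the main obstacle to be the careful tracking of the super-signs $(-1)^{p(\cdot)p(\cdot)}$ when applying the Jacobi identity (especially around the odd nodes $0$ and $m$), together with, in part (4), verifying that the adjacency hypotheses required to iterate part (2) genuinely hold at every step of the chain.
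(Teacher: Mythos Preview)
Your proposal is correct and follows essentially the same route as the paper: part~(1) is obtained by choosing an auxiliary index adjacent to $i$ but not to $j$ and invoking \eqref{3.3.2}, part~(2) by collapsing the Jacobi expansion via (1) and then applying \eqref{3.3.3}, and parts~(3)--(4) by showing that $[J(h_i),x_{\pm\alpha}]$ equals $(\alpha_i,\alpha)$ times an $i$-independent element plus a scalar multiple of $x_{\pm\alpha}$, using iterated applications of~(2). The only cosmetic difference is that the paper handles~(3) before~(4) (declaring~(4) ``similar to~(3)''), while you reverse the order; your explicit check that a suitable neighbour of $i$ exists when $m+n\ge 5$ is a detail the paper leaves implicit.
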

\begin{proof}
We only show the relations for $+$. The other case is proven in a similar way.

\textup{(1)}\ By the definition of the commutator relations of $\widehat{\mathfrak{sl}}(m|n)$, $[x^+_i,x^+_j]=0$ holds when $i\neq j, j\pm1$. There exists an index $p$ such that $a_{i,p}\neq0$ and $a_{j,p}=0$. Appling $\ad(J(h_p))$ to $[x^+_i,x^+_j]=0$, we obtain
\begin{equation*}
a_{i,p}([J(x^+_i),x^+_j]-b_{i,p}\dfrac{\ve_1-\ve_2}{2}[x^+_i,x^+_j])=0
\end{equation*}
by \eqref{3.3.2}. Since $a_{i,p}\neq0$, we have shown that $[J(x^+_i),x^+_j]=0$ holds.

\textup{(2)}\ By (1), the left hand side is equal to
\begin{equation*}
\ad([J(x^+_i),x^+_{i+1}])\prod_{i+2\leq k\leq m+n-1}\ad(x^+_k)\prod_{0\leq k\leq j-1}\ad(x^+_k)(x^+_j).
\end{equation*}
By \eqref{3.3.4}, it is equal to
\begin{align}
&\ad([x^+_i,J(x^+_{i+1})])\prod_{i+2\leq k\leq m+n-1}\ad(x^+_k)\prod_{0\leq k\leq j-1}\ad(x^+_k)(x^+_j)\nonumber\\
&\qquad\qquad-b_{i,i+1}\dfrac{\ve_1-\ve_2}{2}\ad([x^+_i,x^+_{i+1}])\prod_{i+2\leq k\leq m+n-1}\ad(x^+_k)\prod_{0\leq k\leq j-1}\ad(x^+_k)(x^+_j).\label{eqp}
\end{align}
By the Jacobi identity, the first term of \eqref{eqp} is equal to 
\begin{equation*}
\ad(x^+_i)\ad(J(x^+_{i+1}))\prod_{i+2\leq k\leq m+n-1}\ad(x^+_k)\prod_{0\leq k\leq j-1}\ad(x^+_k)(x^+_j).
\end{equation*}
This completes the proof.

\textup{(3)}\ It is enough to assume that $x_{\pm\alpha}=\displaystyle\prod_{s\leq k\leq t-1}\limits\ad(x^\pm_k)x^\pm_t$. By \eqref{3.3.2}, we have
\begin{align*}
[J(h_i),x_{\pm\alpha}]&=\pm\delta(s\geq i+1\geq t)a_{i,i+1}\prod_{s\leq k\leq i}\ad(x^\pm_k)J(x^\pm_{i+1})\prod_{i+2\leq k\leq t-1}\ad(x^\pm_k)x^\pm_t\\
&\quad\pm\delta(s\geq i\geq t)a_{i,i}\prod_{s\leq k\leq i-1}\ad(x^\pm_k)J(x^\pm_{i})\prod_{i+1\leq k\leq t-1}\ad(x^\pm_k)x^\pm_t\\
&\quad\pm\delta(s\geq i-1\geq t)a_{i,i-1}\prod_{s\leq k\leq i-2}\ad(x^\pm_k)J(x^\pm_{i-1})\prod_{i+1\leq k\leq t-1}\ad(x^\pm_k)x^\pm_t\\
&\quad\pm d^1_i(\alpha_i,\alpha)\prod_{s\leq k\leq t-1}\ad(x^\pm_k)x^\pm_t,
\end{align*}
where $d^1_i$ is a complex number. By a discussion similar to the one in the proof of (2), we find that there exists a complex number $d^2_i$ such that the sum of the first three terms is equal to
\begin{align*}
\pm(\alpha_i,\alpha)\prod_{s\leq k\leq t-1}\ad(x^\pm_k)J(x^\pm_t)\pm d^2_i(\alpha_i,\alpha)\prod_{s\leq k\leq t-1}\ad(x^\pm_k)x^\pm_t.
\end{align*}
Then, we obtain
\begin{align*}
(\alpha_j,\alpha)[J(h_i),x_{\pm\alpha}]-(\alpha_k,\alpha)[J(h_j),x_{\pm\alpha}]=\pm\{(\alpha_j,\alpha)(d_i^1+d^2_i)-(\alpha_i,\alpha)(d_j^1+d^2_j)\}x_{\pm\alpha}.
\end{align*}
We complete the proof.

\textup{(4)}\ It is proven in a similar way to the one in the proof of (3).
\end{proof}
Next, in order to obtain \eqref{992}, we prepare ${\{\tau_i\}}_{i\neq0,m}$, which are automorphisms of the affine super Yangian.
Let us set $\{s_i\}_{i\neq0,m}$ as an automorphism of $\Delta$ such that $s_i(\alpha)=\alpha-\dfrac{2(\alpha_i,\alpha)}{(\alpha_i,\alpha_i)}\alpha_i$.
By the definition of $\widehat{\mathfrak{sl}}(m|n)$, we can rewrite $s_i$ explicitly as follows;
\begin{equation*}
s_i(\alpha_j)=\begin{cases}
-\alpha_j&\text{ if }i=j,\\
\alpha_i+\alpha_j&\text{ if }j=i\pm1,\\
\alpha_j&\text{ otherwise.}
\end{cases}
\end{equation*}
It is called a simple reflection. We also define ${\{\tau_i\}}_{i\neq0,m}$ as an operator on the affine super Yangian determined by
\begin{align}
\tau_i(x)=\exp(\ad(x^+_i))\exp\Big(-\ad(x^-_i)\Big)\exp(\ad(x^+_i))x.\label{al32}
\end{align}
By the defining relation \eqref{eq1.6}, $\tau_i$ is well-defined as an operator on the affine super Yangian. The following lemma is well-known (see \cite{Kac2}).
\begin{Lemma}\label{KAC}
\textup{(1)} The action of $\tau_i$ preserves the inner product $\kappa$.

\textup{(2)} For all $\alpha\in\Delta$, $\tau_i(\mathfrak{g}_\alpha)=\mathfrak{g}_{s_i(\alpha)}$.
\end{Lemma}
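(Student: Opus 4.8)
The plan is to reduce both assertions to the degree-zero part of the affine super Yangian, namely the image $\Phi(\mathfrak{g})$ of $\mathfrak{g}=\widetilde{\mathfrak{sl}}(m|n)$, and then invoke the standard theory of integrable $\mathfrak{sl}_2$-actions as in \cite{Kac2}. Since $i\neq0,m$, the generators $x^\pm_{i,0}$ are even and $a_{i,i}=\pm2\neq0$, so $\{x^+_{i,0},x^-_{i,0},h_{i,0}\}$ spans an ordinary copy of $\mathfrak{sl}_2$ inside $\Phi(\mathfrak{g})$. Because $\ad(x^\pm_{i,0})$ preserves $\Phi(\mathfrak{g})$ and acts locally nilpotently on it (each root space is finite dimensional and the Serre relations \eqref{eq1.6} kill high powers), the three exponential factors in \eqref{al32} are well-defined automorphisms of $\Phi(\mathfrak{g})$; hence $\tau_i$ restricts to an automorphism of $\Phi(\mathfrak{g})$, and it suffices to establish both statements there.

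For (1), I would use that $\kappa$ is an invariant bilinear form. For an \emph{even} element $y$ one has $\kappa([y,a],b)+\kappa(a,[y,b])=0$, so $\ad(y)$ is skew with respect to $\kappa$; expanding both exponentials term by term and applying the Leibniz rule then gives $\kappa(\exp(\ad y)a,\exp(\ad y)b)=\kappa(a,b)$. Applying this to $y=x^+_{i,0}$ and $y=x^-_{i,0}$ and composing the three factors of \eqref{al32} shows that $\tau_i$ is an isometry of $\kappa$. The super-structure is harmless here precisely because $i\neq0,m$ forces $y$ to be even, so no nontrivial signs enter the Leibniz expansion.

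For (2), I would first compute the action of $\tau_i$ on the Cartan: the elementary $\mathfrak{sl}_2$-calculation inside $\langle x^+_{i,0},x^-_{i,0},h_{i,0}\rangle$ shows that $\tau_i$ negates the coroot direction and fixes its $\kappa$-orthogonal complement, so $\tau_i$ acts on $\mathfrak{h}$ exactly as the simple reflection $s_i$. Then for any $x\in\mathfrak{g}_\alpha$ and any $h\in\mathfrak{h}$,
\[
[h,\tau_i(x)]=\tau_i\big([\tau_i^{-1}(h),x]\big)=\alpha\big(\tau_i^{-1}(h)\big)\,\tau_i(x)=(s_i\alpha)(h)\,\tau_i(x),
\]
whence $\tau_i(\mathfrak{g}_\alpha)\subseteq\mathfrak{g}_{s_i(\alpha)}$. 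Applying the same containment to $\tau_i^{-1}$, which is of the identical form, and comparing dimensions yields the desired equality $\tau_i(\mathfrak{g}_\alpha)=\mathfrak{g}_{s_i(\alpha)}$.

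The entire computation is routine and is the verbatim super-analogue of the Kac-Moody case. The only point deserving a word of care is the well-definedness and automorphism property of $\tau_i$ on the infinite-dimensional space $\Phi(\mathfrak{g})$, which is exactly where the local nilpotency of $\ad(x^\pm_{i,0})$ furnished by \eqref{eq1.6} is used; I do not expect any genuine obstacle, since the restriction $i\neq0,m$ keeps every relevant generator even and thereby trivializes all the signs that could in principle arise in the super setting.
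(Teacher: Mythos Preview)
Your argument is correct and is precisely the standard integrable $\mathfrak{sl}_2$ computation that the paper defers to \cite{Kac2}; the paper gives no proof of its own, so there is nothing further to compare. The only minor point worth noting is that when $m+1\le i\le m+n-1$ one has $a_{i,i}=-2$, so the triple $\{x^+_{i,0},x^-_{i,0},h_{i,0}\}$ is an $\mathfrak{sl}_2$ with the roles of $e$ and $f$ interchanged, but your reasoning goes through unchanged since the operator $\tau_i$ in \eqref{al32} is still built from even locally nilpotent derivations.
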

Then, in a similar way as that of Lemma 3.17 and 3.19 of \cite{GNW}, we can compute the action of $\tau_i$ on $J(h_j)$ and write it explicitly.  
\begin{Lemma}\label{Lem35}
When $i\neq0,m$, we obtain
\begin{equation*}
\tau_i(J(h_j))=J(h_j)-\dfrac{2(\alpha_i,\alpha_j)}{(\alpha_i,\alpha_i)}J(h_i)+a_{i,j}b_{j,i}(\ve_1-\ve_2)h_i.
\end{equation*}
\end{Lemma}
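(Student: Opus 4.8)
The plan is to follow the proof of Lemmas 3.17 and 3.19 of \cite{GNW}: expand the triple exponential $\tau_i=\exp(\ad x^+_{i,0})\exp(-\ad x^-_{i,0})\exp(\ad x^+_{i,0})$ of \eqref{al32} and apply it to $J(h_j)$, reducing the whole computation to iterated brackets that are controlled by the relations \eqref{3.3.1}--\eqref{3.3.4} of Lemma~\ref{Lemma39}. Because $i\neq0,m$, the node $i$ is even with $a_{i,i}=\pm2\neq0$ and $b_{i,i}=0$; hence $x^\pm_{i,0}$ is even, $[x^\pm_{i,0},x^\pm_{i,0}]=0$, and $\{x^+_{i,0},x^-_{i,0},h_{i,0}\}$ is an honest $\mathfrak{sl}_2$-triple with no self-corrections.

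First I would record the brackets that drive the exponentials. Setting $i=j$ in \eqref{3.3.3} and using $b_{i,i}=0$ gives $[x^\pm_{i,0},J(x^\pm_i)]=0$; relation \eqref{3.3.4} gives $[x^\mp_{i,0},J(x^\pm_i)]=\mp J(h_i)$; and interchanging the two indices in \eqref{3.3.2} (together with $a_{i,j}=a_{j,i}$) gives $[J(h_j),x^\pm_{i,0}]=\pm(\alpha_i,\alpha_j)J(x^\pm_i)\mp a_{i,j}b_{j,i}\tfrac{\ve_1-\ve_2}{2}x^\pm_{i,0}$. Since $\ad(x^\pm_{i,0})$ kills both $J(x^\pm_i)$ and $x^\pm_{i,0}$, every exponential applied to $J(h_j)$ or to $J(x^\pm_i)$ truncates after very few terms. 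For the degree-$0$ elements $h_{i,0}$ and $x^\pm_{i,0}$ I would simply use the undeformed $\mathfrak{sl}_2$-action recorded by Lemma~\ref{KAC} (so $\tau_i(h_{i,0})=-h_{i,0}$ and $\tau_i(x^\pm_{i,0})=-x^\mp_{i,0}$ up to normalization).

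With these identities in hand I would apply the three exponentials to $J(h_j)$ in turn and collect terms. The contributions that still carry a $J$ reassemble into $J(h_j)-\tfrac{2(\alpha_i,\alpha_j)}{(\alpha_i,\alpha_i)}J(h_i)$; this agrees with the action $h_j\mapsto s_i(h_j)$ predicted by Lemma~\ref{KAC}, which gives a welcome consistency check on this part. The remaining terms are all proportional to $h_{i,0}$ with a coefficient linear in $\ve_1-\ve_2$, and summing the three exponentials should produce exactly $a_{i,j}b_{j,i}(\ve_1-\ve_2)h_{i,0}$.

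The hard part will be the bookkeeping of the $\tfrac{\ve_1-\ve_2}{2}$-corrections: one must track them across all three exponentials and confirm that the various $a_{\cdot,\cdot}$ and $b_{\cdot,\cdot}$ factors combine to precisely $a_{i,j}b_{j,i}(\ve_1-\ve_2)$ and not some other multiple of it. A secondary point is to check that $i\neq0,m$ really eliminates every parity sign; here I would invoke the fact used in the proof of Lemma~\ref{Lemma39} that no pair $i,j$ has $a_{i,j}\neq0$ with both nodes odd, so no stray $(-1)$ factors arise. Well-definedness in the completion $\widehat{Y}_{\ve_1,\ve_2}(\widehat{\mathfrak{sl}}(m|n))$ is not a concern, since Lemma~\ref{Lemma39} has already converted the infinite sums defining $J(h_j)$ and $J(x^\pm_i)$ into finite commutator identities and each exponential series above terminates.
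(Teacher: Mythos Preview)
Your proposal is correct and follows exactly the approach the paper indicates: the paper does not give a proof of this lemma but simply states that it is obtained ``in a similar way as that of Lemma 3.17 and 3.19 of \cite{GNW}'', which is precisely what you are doing. The key identities you extract from Lemma~\ref{Lemma39}---in particular $[x^\pm_{i,0},J(x^\pm_i)]=0$ from \eqref{3.3.3} with $b_{i,i}=0$, and the form of $[J(h_j),x^\pm_{i,0}]$ from \eqref{3.3.2} with the indices swapped---are exactly the inputs that make the three exponentials terminate and the bookkeeping tractable.
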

Since $\text{dim}\mathfrak{g}_\alpha=1$ for all $\alpha\in\Delta^{\re}$, we sometimes denote $x^{k_\alpha}_{-\alpha}$ and $x^{k_\alpha}_{\alpha}$ as $x_{-\alpha}$ and $x_\alpha$ for all $\alpha\in\Delta^{\re}_+$.
\begin{Proposition}\label{Prop1}
For $i,j\in I$ and a positive real root $\alpha$, the following equation holds;
\begin{equation}
(\alpha_j,\alpha)[J(h_i),x_{\alpha}]-(\alpha_i,\alpha)[J(h_j),x_{\alpha}]=c_{i,j}^\alpha x_{\alpha},\label{992}
\end{equation}
where $c_{i,j}^\alpha$ is a complex number such that $c_{i,j}^\alpha=-c_{i,j}^{-\alpha}$.
\end{Proposition}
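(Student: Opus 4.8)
The plan is to prove \eqref{992} in two stages. First I would show that the even operator $D_{i,j}^\alpha := (\alpha_j,\alpha)\ad(J(h_i)) - (\alpha_i,\alpha)\ad(J(h_j))$ sends each real root vector $x_{\pm\alpha}$ back into its own one–dimensional root space $\Phi(\mathfrak{g}_{\pm\alpha}) = \mathbb{C}x_{\pm\alpha}$, so that scalars $c_{i,j}^{\pm\alpha}$ are well defined by $D_{i,j}^\alpha(x_{\pm\alpha}) = c_{i,j}^{\pm\alpha}x_{\pm\alpha}$; then I would deduce the antisymmetry $c_{i,j}^\alpha = -c_{i,j}^{-\alpha}$ from the fact that $\ad J(h_i)$ annihilates the Cartan subalgebra.

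For the first stage I would induct on the $\delta$-level of $\alpha$. When $\alpha$ is a finite positive root, i.e. $\alpha\in\sum_{1\le l\le m+n-1}\mathbb{Z}_{\ge0}\alpha_l$, the assertion is exactly Corollary~\ref{Cor297}(3), which gives $D_{i,j}^\alpha(x_{\pm\alpha}) = \pm d_{i,j}^\alpha x_{\pm\alpha}$ and hence already $c_{i,j}^\alpha = -c_{i,j}^{-\alpha}$ in this case. For a general positive real root I would fix a root vector $x_\alpha$ written as an iterated bracket of the simple generators $x_{k,0}^\pm$ passing through the affine node $0$, of the shape $\prod_{i\le k\le m+n-1}\ad(x_k^\pm)\prod_{0\le k\le j-1}\ad(x_k^\pm)(x_j^\pm)$ appearing in Corollary~\ref{Cor297}. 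Corollary~\ref{Cor297}(2) and (4) compute $\ad(J(x_i^\pm))$ and $\ad(J(h_s))$ on such brackets and show that, up to a scalar multiple of the original vector, they reproduce the same bracket with one simple factor replaced by its $J$-image. Feeding the inductive hypothesis into these identities, the combination $D_{i,j}^\alpha(x_\alpha)$ collapses to a multiple of $x_\alpha$, the correction terms $b_{\bullet,\bullet}\tfrac{\ve_1-\ve_2}{2}(\cdots)$ contributing only further multiples of $x_\alpha$; the same computation handles $x_{-\alpha}$. Whenever it is convenient to move $\alpha$ inside a $W'=\langle s_k : k\neq 0,m\rangle$-orbit I would transport the relation using the automorphisms $\tau_k$ together with Lemma~\ref{Lem35}, since by Lemma~\ref{KAC} these carry $\mathfrak{g}_\alpha$ to $\mathfrak{g}_{s_k(\alpha)}$ and act on $J(h_j)$ by the explicit reflection formula, so that proportionality is preserved under each $s_k$.

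Once both $x_\alpha$ and $x_{-\alpha}$ are known to be eigenvectors of $D_{i,j}^\alpha$, the antisymmetry is immediate. The element $[x_\alpha,x_{-\alpha}]=\Phi(t_\alpha)$ is a nonzero Cartan element, hence a linear combination of the $h_{k,0}$, so \eqref{3.3.1} gives $[J(h_i),[x_\alpha,x_{-\alpha}]]=0$ and therefore $D_{i,j}^\alpha([x_\alpha,x_{-\alpha}])=0$. Since $J(h_i)$ is even, expanding by the super-Leibniz rule yields
\begin{equation*}
0 = [D_{i,j}^\alpha(x_\alpha),x_{-\alpha}] + [x_\alpha, D_{i,j}^\alpha(x_{-\alpha})] = (c_{i,j}^\alpha + c_{i,j}^{-\alpha})[x_\alpha,x_{-\alpha}],
\end{equation*}
and as $[x_\alpha,x_{-\alpha}]\neq 0$ I conclude $c_{i,j}^\alpha = -c_{i,j}^{-\alpha}$.

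The hard part will be the first stage for the affine real roots. Unlike the non-super case, the node $0$ (and likewise $m$) is odd isotropic, so no reflection $\tau_0$ is available, and the finite subgroup $W'$ does not reach the roots of positive $\delta$-level. One must therefore realize every positive real root vector explicitly through the node $0$ and control, level by level, how the $J$-derivations act on these long iterated brackets, keeping careful track of the $\mathbb{Z}/2$-gradings and of the $\tfrac{\ve_1-\ve_2}{2}$ error terms so that they never produce a component outside $\mathbb{C}x_\alpha$.
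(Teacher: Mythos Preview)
Your plan uses the same toolkit as the paper---Corollary~\ref{Cor297} for the computational base cases, the automorphisms $\tau_k$ together with Lemma~\ref{Lem35} for transport along even reflections---and would succeed, but the organization differs in one substantive way. The paper does not induct on the $\delta$-level. Instead it case-splits on the parity of $\alpha$: for an \emph{even} real root one has $\alpha\in(\text{finite even root})+s\delta$, and the paper first proves the claim directly for $\alpha=\alpha_k+s\delta$ (any $s$) by writing $x_{\pm\alpha}=\ad(x_{\pm\delta}^1)^s x_k^\pm$ for a concrete imaginary root vector $x_{\pm\delta}^1$, checking via Corollary~\ref{Cor297}(3)--(4) that $[J(h_i),x_{\pm\delta}^1]\in\mathbb{C}x_{\pm\delta}^1$, and then inducts on the length of a reduced word in $W'=\langle s_k:k\neq0,m\rangle$ using Lemma~\ref{Lem35}. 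For an \emph{odd} real root the paper establishes an analogous base case for one specific odd root at each $\delta$-level (Claim~\ref{Claim32}) and then again moves around by $W'$. The point is that the $\ad(x_{\pm\delta})^s$ device absorbs the entire affine direction into the base case, so that only the finite Weyl group is ever needed; this is exactly what fills the gap you flag in your last paragraph.

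Your Stage~2 argument for $c_{i,j}^\alpha=-c_{i,j}^{-\alpha}$, via $[J(h_i),[x_\alpha,x_{-\alpha}]]=0$ and super-Leibniz, is cleaner than the paper's treatment, which instead tracks the $\pm$ signs throughout the explicit computations so that the antisymmetry falls out along the way. Your argument is valid and buys a genuine simplification: once the eigenvector property is known for both $x_\alpha$ and $x_{-\alpha}$, the sign relation is a one-line consequence of \eqref{3.3.1}, and you avoid having to keep the $\pm$ bookkeeping straight across all the inductive steps.
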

\begin{proof}
We divide the proof into two cases; one is that $\alpha$ is even, the other is that $\alpha$ is odd..

{\bf Case 1,} $\alpha$ is even.

Suppose that $\alpha$ is even. Then, there exists $s\in\mathbb{Z}$ such that $\alpha$ is an element of $\displaystyle\sum_{1\leq l \leq m-1}\limits\mathbb{Z}\alpha_i+s\delta$ or $\displaystyle\sum_{m+1\leq l \leq m+n-1}\limits\mathbb{Z}\alpha_i+s\delta$. We only prove the case where $\alpha\in\displaystyle\sum_{1\leq l \leq m-1}\limits\mathbb{Z}_{\geq0}\alpha_i+\mathbb{Z}_{\geq0}\delta$. The other cases are proven in a similar way.

First, we prove the case where $\alpha=\alpha_k+s\delta$, where $k\neq0,m$.
\begin{Claim}\label{Claim534}
Suppose that $\alpha=\alpha_k+s\delta$ such that $k\neq0,m$.
Then, we have
\begin{gather*}
[J(h_i),x_{\alpha}]=\dfrac{(\alpha_i,\alpha_k)}{(\alpha_k,\alpha_k)}[J(h_k),x_{\alpha}]+d_\alpha x_{\alpha},\qquad
[J(h_i),x_{-\alpha}]=\dfrac{(\alpha_i,\alpha_k)}{(\alpha_k,\alpha_k)}[J(h_k),x_{-\alpha}]-d_\alpha x_{-\alpha},
\end{gather*}
where $d_\alpha$ is a complex number.
\end{Claim}
\begin{proof}
Let us set 
\begin{gather*}
x^1_{\pm\delta}=[x^\pm_k,\displaystyle\prod_{k+1\leq p\leq m+n-1}\limits\ad(x^\pm_p)\displaystyle\prod_{0\leq p\leq k-2}\limits\ad(x^\pm_p)(x^\pm_{k-1})].
\end{gather*}
It is enough to suppose that $x_{\pm\alpha}=\ad(x^1_{\pm\delta})^sx^\pm_k$ since $\ad(x^1_{\pm\delta})^sx^\pm_k$ is nonzero.
By the Jacobi identity, we obtain
\begin{align}
&\quad[(\alpha_k,\alpha)J(h_i)-(\alpha_i,\alpha)J(h_k),\ad(x^1_{\pm\delta})^sx^\pm_k]\nonumber\\
&=\sum_{0\leq t\leq s-1}\ad(x^1_{\pm\delta})^t\ad([(\alpha_k,\alpha)J(h_i)-(\alpha_i,\alpha)J(h_k),x^1_{\pm\delta}])\ad(x^1_{\pm\delta})^{s-1-t}x^\pm_k\nonumber\\
&\qquad\qquad+\ad(x^1_{\pm\delta})^s[(\alpha_k,\alpha)J(h_i)-(\alpha_i,\alpha)J(h_k),x^\pm_k].\label{equd}
\end{align}
By \eqref{3.3.2}, $[(\alpha_k,\alpha)J(h_i)-(\alpha_i,\alpha)J(h_k),x^\pm_k]$ can be written as $\pm f_kx^\pm_k$, where $f_k$ is a complex number. Then, we have
\begin{align*}
&\quad[J(h_i),x^1_{\pm\delta}]\\
&=[x^\pm_k,[(\alpha_k,\alpha)J(h_i)-(\alpha_i,\alpha)J(h_k),\displaystyle\prod_{k+1\leq p\leq m+n-1}\limits\ad(x^\pm_p)\displaystyle\prod_{0\leq p\leq k-2}\limits\ad(x^\pm_p)(x^\pm_{k-1})]]\pm f_kx^1_{\pm\delta}.
\end{align*}
By Corollary~\ref{Cor297} (4), we can rewrite the first term as 
\begin{align*}
&\pm(\alpha_k,\alpha)(\alpha_i,\alpha)[x^\pm_k,\displaystyle\prod_{k+1\leq p\leq m+n-1}\limits\ad(x^\pm_p)\displaystyle\prod_{0\leq p\leq k-2}\limits\ad(x^\pm_p)J(x^\pm_{k-1})]\\
&\quad\mp(\alpha_k,\alpha)(\alpha_i,\alpha)[x^\pm_k,\displaystyle\prod_{k+1\leq p\leq m+n-1}\limits\ad(x^\pm_p)\displaystyle\prod_{0\leq p\leq k-2}\limits\ad(x^\pm_p)J(x^\pm_{k-1})]\pm g_kx^1_{\pm\delta}\\
&=\pm g_kx^1_{\pm\delta},
\end{align*}
where $g_k$ is a complex number.
We have obtained the statement.
\end{proof}
Now, let us consider the case where $\alpha$ is a general even root. Any even root $\alpha=\displaystyle\sum_{0\leq k\leq l}\limits\alpha_{p+k}$ can be written as $\displaystyle\prod_{0\leq k\leq l-1}\limits s_{p+k}(\alpha_{p+l})$ by the explicit presentation of $s_i$. Let us prove that the statement of Proposition~\ref{Prop1} holds by the induction on $l$. When $l=1$, it is nothing but Claim~\ref{Claim534}. Assume that \eqref{992} holds when $l=q$. We set $\alpha$ and $\beta$ as $\displaystyle\prod_{0\leq k\leq q}\limits s_{p+k}(\alpha_{p+q+1})$ and $\displaystyle\prod_{1\leq k\leq q}\limits s_{p+k}(\alpha_{p+q+1})$. Suppose that $x_{\beta}$ is a nonzero element of $\mathfrak{g}_{\beta}$. By Lemma~\ref{KAC}, $\mathfrak{g}_\alpha$ contains a nonzero element $\tau_{s_p}(x_{\beta})$. Thus, we obtain
\begin{align}
&\quad(\alpha_j,\alpha)[J(h_i),\tau_{s_p}(x_{\pm\beta})]-(\alpha_i,\alpha)[J(h_j),\tau_{s_p}(x_{\pm\beta})]\nonumber\\
&=\tau_{s_p}\Big\{(\alpha_j,\alpha)\big[J(h_i)-\dfrac{2(\alpha_i,\alpha_p)}{(\alpha_p,\alpha_p)}J(h_p),x_{\pm\beta}\big]-(\alpha_i,\alpha)\big[J(h_j)-\dfrac{2(\alpha_i,\alpha_p)}{(\alpha_p,\alpha_p)}J(h_p),(x_{\pm\beta})\big]\Big\}\nonumber\\
&\quad\mp\{(\alpha_j,\alpha)a_{p,i}b_{i,p}-(\alpha_i,\alpha)a_{p,j}b_{j,p}\}(\ve_1-\ve_2)x_{\pm \alpha}\label{aligncde}
\end{align}
by Lemma~\ref{Lem35}. Let us suppose that $(\alpha_t,\beta)\neq0$. Then, by the induction hypothesis, we find the relation
\begin{equation}
[J(h_u),x_{\pm\beta}]=\pm\dfrac{(\alpha_u,\beta)}{(\alpha_t,\beta)}[J(h_t),x_{\pm\beta}]\pm c^\beta_{u,t}x_{\pm\beta}.\label{alignc}
\end{equation}
Applying \eqref{alignc} to \eqref{aligncde}, we obtain
\begin{align*}
\big[J(h_i)-\dfrac{2(\alpha_i,\alpha_p)}{(\alpha_p,\alpha_p)}J(h_p),x_{\pm\beta}\big]
&=\pm\Big\{\dfrac{(\alpha_i,\beta)}{(\alpha_t,\beta)}-\dfrac{2(\alpha_i,\alpha_p)}{(\alpha_p,\alpha_p)}\cdot\dfrac{(\alpha_p,\beta)}{(\alpha_t,\beta)}\Big\}([J(h_t),x_{\pm\beta}]+c^\beta_{i,t}x_{\pm\beta})\\
&=\pm\dfrac{\Big(\alpha_i,\beta-\dfrac{2(\alpha_i,\alpha_p)}{(\alpha_p,\alpha_p)}\alpha_p\Big)}{(\alpha_t,\beta)}([J(h_t),x_{\pm\beta}]+c^\beta_{i,t}x_{\pm\beta}).
\end{align*}
By the definition of $s_p$, $\alpha$ is equal to $\beta-\dfrac{2(\alpha_i,\alpha_p)}{(\alpha_p,\alpha_p)}\alpha_p$. Then, we have 
\begin{equation}
[J(h_i)-\dfrac{2(\alpha_i,\alpha_p)}{(\alpha_p,\alpha_p)}J(h_p),x_{\pm\beta}]=\pm\dfrac{(\alpha,\alpha_i)}{(\alpha_t,\beta)}([J(h_t),x_{\pm\beta}]+c^\beta_{i,t}x_{\pm\beta}).\label{equf}
\end{equation}
Similarly, we find the relation
\begin{equation}
[J(h_j)-\dfrac{2(\alpha_j,\alpha_p)}{(\alpha_p,\alpha_p)}J(h_p),x_{\pm\beta}]=\pm\dfrac{(\alpha,\alpha_j)}{(\alpha_t,\beta)}([J(h_t),x_{\pm\beta}]+c^\beta_{j,t}x_{\pm\beta}).\label{eque}
\end{equation}
Appling \eqref{equf} and \eqref{eque} to the right hand side of \eqref{aligncde}, 
\begin{align*}
&\quad(\alpha_j,\alpha)[J(h_i),\tau_{s_p}(x_{\pm\beta})]-(\alpha_i,\alpha)[J(h_j),\tau_{s_p}(x_{\pm\beta})]\nonumber\\
&=\pm\tau_{s_p}\Big\{(\alpha_j,\alpha)\dfrac{(\alpha,\alpha_i)}{(\alpha_t,\beta)}c^\beta_{i,t}x_{\pm\beta}-(\alpha_i,\alpha)\dfrac{(\alpha,\alpha_j)}{(\alpha_t,\beta)}c^\beta_{j,t}x_{\pm\beta}\}\nonumber\\
&\quad\mp\{(\alpha_j,\alpha)a_{p,i}b_{i,p}-(\alpha_i,\alpha)a_{p,j}b_{j,p}\}(\ve_1-\ve_2)x_{\pm \alpha}.
\end{align*}
This completes the proof of the case where $\alpha$ is even.

{\bf Case 2,} $\alpha$ is odd.

Here after, we suppose that $m$ is greater than 3. The other case is proven in a similar way. First, we consider the case where $\alpha=\displaystyle\sum_{1\leq l\leq m-1}\limits\alpha_i+\alpha_m+s\delta$.
\begin{Claim}\label{Claim32}
\textup{(1)} When $i\neq0,1,m,m+1$, $[J(h_i),x_{\pm\alpha}]=\pm c_\alpha^i x_{\pm\alpha}$, where $c_\alpha$ is a complex number.

\textup{(2)} We obtain the following equations;
\begin{gather}
[J(h_0),x_{\pm\alpha}]=\dfrac{(\alpha_0,\alpha)}{(\alpha_1,\alpha)}[J(h_1),x_{\pm\alpha}]\pm d_{0,1}x_{\pm\alpha},\label{eq995}\\
[J(h_m),x_{\pm\alpha}]=\dfrac{(\alpha_m,\alpha)}{(\alpha_1,\alpha)}[J(h_1),x_{\pm\alpha}]\pm d_{m,1}x_{\pm\alpha},\label{eq996}\\
[J(h_{m+1}),x_{\pm\alpha}]=\dfrac{(\alpha_{m+1},\alpha)}{(\alpha_m,\alpha)}[J(h_m),x_{\pm\alpha}]\pm d_{m,m+1}x_{\pm\alpha},\label{eq997}
\end{gather}
where $d_{0,1}$, $d_{m,1}$, and $d_{m,m+1}$ are complex numbers.
\end{Claim}
\begin{proof}
\textup{(1)} When $i\neq0,1,2,m,m+1$, we set $x^2_{\pm\delta}=[x^\pm_1,\displaystyle\prod_{2\leq p\leq m+n-1}\limits\ad(x^\pm_p)(x^\pm_0)]$. It is sufficient to assume that
\begin{equation*}
x_{\pm\alpha}=\ad(x^2_{\pm\delta})^s\displaystyle\sum_{1\leq l\leq m-1}\limits\ad(x^\pm_i)(x^\pm_m)
\end{equation*} 
since the right hand side is nonzero.
In a similar way as that of Claim~\ref{Claim534}, we also have
\begin{gather}
[J(h_i), x^2_{\pm\delta}]=\pm h_\delta x^2_{\pm\delta},\label{equi}\\
[J(h_i), \displaystyle\sum_{1\leq l\leq m-1}\limits\ad(x^\pm_i)(x^\pm_m)]=\pm i_\alpha\displaystyle\sum_{1\leq l\leq m-1}\limits\ad(x^\pm_i)(x^\pm_m),\label{equj}
\end{gather}
where $h_\delta$ and $i_\alpha$ are complex numbers. Thus, we find the equality
\begin{equation*}
[J(h_i),x^{k_\alpha}_{\alpha}]=\pm(sh_\delta+i_\alpha)\ad(x^2_{\pm\delta})^s\displaystyle\sum_{1\leq l\leq m-1}\limits\ad(x^\pm_i)(x^\pm_m)
\end{equation*}
by the Jacobi identity, \eqref{equi}, and \eqref{equj}.
We have proved the statement when $i\neq0,1,2,m,m+1$. When $i=2$, we set $x^3_{\pm\delta}$ as 
\begin{equation*}
[x^\pm_{m+1},\displaystyle\prod_{m+2\leq p\leq m+n-1}\limits\ad(x^\pm_p)\prod_{0\leq p\leq m-1}\limits\ad(x^\pm_p)\ad(x^\pm_m)].
\end{equation*}
It is enough to assume that
\begin{equation*}
x_{\pm\alpha}=\ad(x^3_{\pm\delta})^s\displaystyle\sum_{1\leq l\leq m-1}\limits\ad(x^\pm_i)(x^\pm_m)
\end{equation*} 
since the right hand side is nonzero.
In a similar way as that of Claim~\ref{Claim534}, we also have
\begin{gather*}
[J(h_i), x^3_{\pm\delta}]=\pm j_\delta x^3_{\pm\delta},\label{equh}\\
[J(h_i),\displaystyle\sum_{1\leq l\leq m-1}\limits\ad(x^\pm_i)(x^\pm_m)]=\pm k_\alpha\displaystyle\sum_{1\leq l\leq m-1}\limits\ad(x^\pm_i)(x^\pm_m),\label{equj2}
\end{gather*}
where $j_\delta$ and $k_\alpha$ are complex numbers. Thus, we find the relation
\begin{equation*}
[J(h_i),x_{\alpha}]=\pm(sj_\delta+k_\alpha)\ad(x^2_{\pm\delta})^s\displaystyle\sum_{1\leq l\leq m-1}\limits\ad(x^\pm_i)(x^\pm_m)
\end{equation*}
by the Jacobi identity, \eqref{equh} and \eqref{equj2}.
We have proved the statement when $i=2$.

\textup{(2)} First, we prove that \eqref{eq995} holds. By the definition of $\alpha$, $x_{\pm\alpha}$ can be written as $[x_{\pm\beta},x^\pm_m]$, where $x_{\pm\beta}$ is a nonzero element of $\mathfrak{g}_{\alpha-\alpha_m}$. Since $[J(h_0),x^\pm_m]$ and $[J(h_1),x^\pm_m]$ is equal to zero by \eqref{3.3.2}, we obtain
\begin{gather}
[J(h_0),x_{\pm\alpha}]=[[J(h_0),x_{\pm\beta}],x^\pm_m],\label{equk}\\
[J(h_1),x_{\pm\alpha}]=[[J(h_1),x_{\pm\beta}],x^\pm_m].\label{equl}
\end{gather}
Then, because $\beta$ is even, we have
\begin{equation}
[[J(h_0),x_{\pm\beta}],x^\pm_m]=\dfrac{(\alpha_0,\beta)}{(\alpha_1,\beta)}[[J(h_1),x_{\pm\beta}],x^\pm_m]+\dfrac{(\alpha_0,\beta)}{(\alpha_1,\beta)}[x_{\pm\beta},x^\pm_m]\label{equm}
\end{equation}
by Case~1.
By\eqref{equk}, \eqref{equl}, and \eqref{equm}, we find the equality
\begin{equation*}
[J(h_0),x_{\pm\alpha}]=\dfrac{(\alpha_0,\beta)}{(\alpha_1,\beta)}[J(h_1),x_{\pm\alpha}]+\dfrac{(\alpha_0,\beta)}{(\alpha_1,\beta)}[x_{\pm\beta},x^\pm_m].
\end{equation*}
Thus we have shown that \eqref{eq995} holds. Similarly, we obtain \eqref{eq996} since $[J(h_m),x^+_m]=0$ holds. 

Finally, we prove that \eqref{eq997} holds. We set $x^4_{\pm\delta}=[x^\pm_1,\prod_{2\leq p\leq m+n-1}\limits\ad(x^\pm_p)(x^\pm_0)]$. It is enough to check the relation under the assumption that $x_{\pm\alpha}=\ad(x^4_{\pm\delta})^s\prod_{1\leq p\leq m-1}\limits\ad(x^\pm_p)(x^\pm_m)$ since the right hand side is nonzero. 
Then, we obtain
\begin{align}
&\quad[J(h_m),x_{\pm\alpha}]\nonumber\\
&=\sum_{1\leq t\leq s}\ad(x^4_{\pm\delta})^{t-1}\ad([J(h_m),x^4_{\pm\delta}])\ad(x^4_{\pm\delta})^{s-t}\prod_{1\leq p\leq m-1}\limits\ad(x^\pm_p)(x^\pm_m)\nonumber\\
&\qquad\qquad+[J(h_m),\ad(x^4_{\pm\delta})^s\prod_{1\leq p\leq m-1}\limits\ad(x^\pm_p)(x^\pm_m)]\label{equt}\\
&\quad[J(h_{m+1}),x_{\pm\alpha}]\nonumber\\
&=\sum_{1\leq t\leq s}\ad(x^4_{\pm\delta})^{t-1}\ad([J(h_{m+1}),x^4_{\pm\delta}])\ad(x^4_{\pm\delta})^{s-t}\prod_{1\leq p\leq m-1}\limits\ad(x^\pm_p)(x^\pm_m)\nonumber\\
&\qquad\qquad+[J(h_{m+1}),\ad(x^4_{\pm\delta})^s\prod_{1\leq p\leq m-1}\limits\ad(x^\pm_p)(x^\pm_m)]\label{equu}
\end{align}
by the Jacobi identity.
First, we rewrite the first term of the right hand side of \eqref{equt} and \eqref{equu}.
By the assumption $m$ is greater than 3, $[J(h_m),x^\pm_1]=0$ holds by \eqref{3.3.2}. Then, in a similar way as that of Claim~\ref{Claim534}, we find the equalities
\begin{gather}
[J(h_m),x^4_{\pm\delta}]=\pm t_\delta x^4_{\pm\delta},\label{equp}\\
[J(h_{m+1}),x^4_{\pm\delta}]=\pm u_\delta x^4_{\pm\delta},\label{equq}
\end{gather}
where $t_\delta$ and $u_\delta$ are complex numbers. Then, we obtain
\begin{align}
&\text{the first term of the right hand side of \eqref{equt}}\nonumber\\
&\qquad=\pm t_\delta\ad(x^4_{\pm\delta})^s\prod_{1\leq p\leq m-1}\limits\ad(x^\pm_p)(x^\pm_m),\label{99999}\\
&\text{the first term of the right hand side of \eqref{equu}}\nonumber\\
&\qquad=\pm u_\delta\ad(x^4_{\pm\delta})^s\prod_{1\leq p\leq m-1}\limits\ad(x^\pm_p)(x^\pm_m)\label{99998}
\end{align}
by \eqref{equp} and \eqref{equq}.
Next, we rewrite the second term of the right hand side of \eqref{equt} and \eqref{equu}. By \eqref{3.3.2}, we obtain
\begin{align}
&\text{the second term of the right hand side of \eqref{equt}}\nonumber\\
&\qquad=\ad(x^4_{\pm\delta})^s\prod_{1\leq p\leq m-2}\ad(x^\pm_p)[J(h_m),[x^\pm_{m-1},x^\pm_m]],\label{99997}\\
&\text{the second term of the right hand side of \eqref{equu}}\nonumber\\
&\qquad=\ad(x^4_{\pm\delta})^s\prod_{1\leq p\leq m-2}\ad(x^\pm_p)[J(h_{m+1}),[x^\pm_{m-1},x^\pm_m]].\label{99996}.
\end{align}
By \eqref{3.3.2} and \eqref{3.3.3}, we find that
\begin{align}
&\quad[J(h_m),[x^\pm_{m-1},x^\pm_m]]\nonumber\\
&=\pm a_{m,m-1}[J(x^\pm_{m-1}),x^\pm_m]\mp a_{m,m-1}b_{m,m-1}\dfrac{\ve_1-\ve_2}{2}[x^\pm_{m-1},x^\pm_m]\nonumber\\
&=\pm a_{m,m-1}[x^\pm_{m-1},J(x^\pm_m)]\mp a_{m,m-1}(b_{m-1,m}+b_{m,m-1})\dfrac{\ve_1-\ve_2}{2}[x^\pm_{m-1},x^\pm_m],\label{99995}\\
&\quad[J(h_{m+1}),[x^\pm_{m-1},x^\pm_m]]\nonumber\\
&=\pm a_{m+1,m}[x^\pm_{m-1},J(x^\pm_m)]\mp a_{m+1,m}b_{m,m+1}\dfrac{\ve_1-\ve_2}{2}[x^\pm_{m-1},x^\pm_m].\label{99994}
\end{align}
Since $a_{m,m-1}=(\alpha,\alpha_m)$ and $a_{m+1,m}=(\alpha,\alpha_{m+1})$, by \eqref{99995} and \eqref{99994}, we obtain
\begin{equation}
(\alpha,\alpha_{m+1})[J(h_m),[x^\pm_{m-1},x^\pm_m]]-(\alpha,\alpha_m)[J(h_{m+1}),[x^\pm_{m-1},x^\pm_m]]=\pm u_\alpha[x^\pm_{m-1},x^\pm_m],\label{99993}
\end{equation}
where $u_\alpha$ is a complex number. Thus, we know that
\begin{align}
&\quad(\alpha,\alpha_{m+1})(\text{the second term of the right hand side of \eqref{equt}})\nonumber\\
&\qquad\qquad-(\alpha,\alpha_m)(\text{the second term of the right hand side of \eqref{equu}})\nonumber\\
&=u_\alpha\ad(x^4_{\pm\delta})^s\prod_{1\leq p\leq m-1}\limits\ad(x^\pm_p)(x^\pm_m).\label{99992}
\end{align}
holds.
By \eqref{99999}, \eqref{99998}, and \eqref{99992}, we have
\begin{align*}
&\quad(\alpha,\alpha_{m+1})[J(h_m),x_{\pm\alpha}]-(\alpha,\alpha_m)[J(h_{m+1}),x_{\pm\alpha}]\\
&=\pm(s(\alpha,\alpha_{m+1})t_\delta-s(\alpha,\alpha_m)u_\delta+u_\alpha)\ad(x^4_{\pm\delta})^s\prod_{1\leq p\leq m-1}\limits\ad(x^\pm_p)(x^\pm_m).
\end{align*}
Then, we have obtained \eqref{eq997}. 
\end{proof}
Next, let us consider the case where $\alpha$ is a general odd root. We only show the case where $\alpha\in\alpha_m+\displaystyle\sum_{\substack{1\leq t \leq m+n-1,\\t\neq m}}\limits\mathbb{Z}_{\geq0}\alpha_t+s\delta$. The other case is proven in a similar way. 

Since $\alpha\in\alpha_m+\displaystyle\sum_{\substack{1\leq t \leq m+n-1,\\t\neq m}}\limits\mathbb{Z}_{\geq0}\alpha_t+s\delta$, $\alpha$ can be written as $\displaystyle\prod_{1\leq t \leq p}\limits s_{i_t}(\sum_{1\leq i\leq m}\alpha_i+\alpha_m)$.
Then, we prove the statement by the induction on $p$. When $p=0$, it is nothing but Claim~\ref{Claim32}. Other cases are proven in a similar way as that of Case~1.
\end{proof}
We easily obtain the following corollary.
\begin{Corollary}\label{C1}
The following equations hold;
\begin{gather}
[J(h_i),\widetilde{v_j}]+[\widetilde{v_i},J(h_j)]=0,\label{rel1}\\
[J(h_i),J(h_j)] + [\widetilde{v_i},\widetilde{v_j}]=0,\label{rel2}
\end{gather}
where $\widetilde{v_i}=v_i+\dfrac{\ve_1+\ve_2}{2}{h_i}^2$.
\end{Corollary}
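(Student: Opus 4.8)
The plan is to establish \eqref{rel1} directly from Proposition~\ref{Prop1} and then obtain \eqref{rel2} as a formal consequence. For \eqref{rel1}, the first point I would record is that the $h^2$-terms cancel in the definition of $\widetilde{v_i}$, so that $\widetilde{v_i}=\frac{\ve_1+\ve_2}{2}\sum_{\alpha\in\Delta_+}\sum_{1\leq k_\alpha\leq\text{dim}\mathfrak{g}_\alpha}(\alpha,\alpha_i)x^{k_\alpha}_{-\alpha}x^{k_\alpha}_{\alpha}$. Because $(\delta,\alpha_i)=0$ for every $i$, the imaginary roots contribute nothing to this sum, so only real roots $\alpha$ (for which $\text{dim}\mathfrak{g}_\alpha=1$) remain and Proposition~\ref{Prop1} applies to every summand. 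I would then expand $[J(h_i),\widetilde{v_j}]+[\widetilde{v_i},J(h_j)]=[J(h_i),\widetilde{v_j}]-[J(h_j),\widetilde{v_i}]$ by the super-Leibniz rule; since $J(h_i)$ and $J(h_j)$ are even, no sign intervenes and each factor $x_{-\alpha}x_\alpha$ splits into $[J(h_\bullet),x_{-\alpha}]x_\alpha+x_{-\alpha}[J(h_\bullet),x_\alpha]$.

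Collecting the resulting terms for a fixed positive real root $\alpha$, the coefficient of the right factor $x_\alpha$ is $(\alpha,\alpha_j)[J(h_i),x_{-\alpha}]-(\alpha,\alpha_i)[J(h_j),x_{-\alpha}]$ and the coefficient of the left factor $x_{-\alpha}$ is $(\alpha,\alpha_j)[J(h_i),x_\alpha]-(\alpha,\alpha_i)[J(h_j),x_\alpha]$. By Proposition~\ref{Prop1} the latter equals $c^{\alpha}_{i,j}x_\alpha$ and the former equals $c^{-\alpha}_{i,j}x_{-\alpha}=-c^{\alpha}_{i,j}x_{-\alpha}$, using the antisymmetry $c^{\alpha}_{i,j}=-c^{-\alpha}_{i,j}$. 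Consequently the contribution of $\alpha$ is $-c^{\alpha}_{i,j}x_{-\alpha}x_\alpha+c^{\alpha}_{i,j}x_{-\alpha}x_\alpha=0$, and summing over $\alpha$ yields \eqref{rel1}. The one genuinely delicate point is precisely this sign bookkeeping: the cancellation works only because the $x_{-\alpha}$- and $x_\alpha$-combinations carry opposite signs, which is exactly the antisymmetry recorded in Proposition~\ref{Prop1}.

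For \eqref{rel2} I would use the identity $J(h_i)=\widetilde{h}_{i,1}+\widetilde{v_i}$, which follows from $\widetilde{h}_{i,1}=h_{i,1}-\frac{\ve_1+\ve_2}{2}h_{i,0}^2$ together with $\widetilde{v_i}=v_i+\frac{\ve_1+\ve_2}{2}h_i^2$. Since $[h_{i,r},h_{j,s}]=0$ by \eqref{eq2.1}, one has $[\widetilde{h}_{i,1},\widetilde{h}_{j,1}]=0$, so that $[J(h_i),J(h_j)]=[\widetilde{h}_{i,1},\widetilde{v_j}]+[\widetilde{v_i},\widetilde{h}_{j,1}]+[\widetilde{v_i},\widetilde{v_j}]$. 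Substituting $\widetilde{h}_{i,1}=J(h_i)-\widetilde{v_i}$ and $\widetilde{h}_{j,1}=J(h_j)-\widetilde{v_j}$ into the two mixed brackets rewrites their sum as $[J(h_i),\widetilde{v_j}]+[\widetilde{v_i},J(h_j)]-2[\widetilde{v_i},\widetilde{v_j}]$, whose first two terms vanish by \eqref{rel1}. Hence $[J(h_i),J(h_j)]=-[\widetilde{v_i},\widetilde{v_j}]$, which is exactly \eqref{rel2}. Thus once \eqref{rel1} is in place, \eqref{rel2} is purely formal, and I expect the main work of the whole corollary to be the sign analysis in the real-root sum above.
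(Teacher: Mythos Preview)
Your argument is correct and follows essentially the same route as the paper: for \eqref{rel1} you expand $\widetilde{v_j}$ as a sum over positive real roots, apply the Leibniz rule, and use Proposition~\ref{Prop1} together with the antisymmetry $c_{i,j}^{\alpha}=-c_{i,j}^{-\alpha}$ to make the two pieces cancel; for \eqref{rel2} you exploit $J(h_i)-\widetilde{v_i}=\widetilde{h}_{i,1}$ and $[\widetilde{h}_{i,1},\widetilde{h}_{j,1}]=0$ to reduce to \eqref{rel1}, which is exactly the paper's strategy (the paper writes the same identity as $[J(h_i)-\widetilde{v_i},J(h_j)-\widetilde{v_j}]=0$ and expands).
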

\begin{proof}
First, we show that \eqref{rel1} holds.
Since $\widetilde{v_i}=\dfrac{\ve_1+\ve_2}{2}\sum_{\alpha\in\Delta^{\re}_+}\limits(\alpha_j,\alpha)x^{k_\alpha}_{-\alpha}x^{k_\alpha}_{\alpha}$ holds, we obtain
\begin{align}
&\phantom{{}={}}[J(h_i),\widetilde{v_j}]+[\widetilde{v_i},J(h_j)]\nonumber\\
&=\dfrac{\ve_1+\ve_2}{2}\sum_{\alpha\in\Delta^{\re}_+}(\alpha_j,\alpha)[J(h_i),x_{-\alpha}]x_{\alpha}+\dfrac{\ve_1+\ve_2}{2}\sum_{\alpha\in\Delta^{\re}_+}(\alpha_j,\alpha)x_{-\alpha}[J(h_i),x_{\alpha}]\nonumber\\
&\quad+\dfrac{\ve_1+\ve_2}{2}\sum_{\alpha\in\Delta^{\re}_+}(\alpha_i,\alpha)[x_{-\alpha},J(h_j)]x_{\alpha}+\dfrac{\ve_1+\ve_2}{2}\sum_{\alpha\in\Delta^{\re}_+}(\alpha_i,\alpha)x_{-\alpha}[x_{\alpha},J(h_j)].\label{eqr1}
\end{align}
By Proposition~\ref{Prop1}, there exists $c_{i,j}^{\alpha}\in\mathbb{C}$ such that
\begin{align}
&\quad\dfrac{\ve_1+\ve_2}{2}\sum_{\alpha\in\Delta^{\re}_+}(\alpha_j,\alpha)[J(h_i),x_{-\alpha}]x_{\alpha}+\dfrac{\ve_1+\ve_2}{2}\sum_{\alpha\in\Delta^{\re}_+}(\alpha_i,\alpha)[x_{-\alpha},J(h_j)]x_{\alpha}\nonumber\\
&=-\dfrac{\ve_1+\ve_2}{2}\sum_{\alpha\in\Delta^{\re}_+}c_{i,j}^{\alpha}x_{-\alpha}x_{\alpha}\label{eqr2}
\end{align}
and
\begin{align}
&\dfrac{\ve_1+\ve_2}{2}\sum_{\alpha\in\Delta^{\re}_+}(\alpha_j,\alpha)x_{-\alpha}[J(h_i),x_{\alpha}]+\dfrac{\ve_1+\ve_2}{2}\sum_{\alpha\in\Delta^{\re}_+}(\alpha_i,\alpha)x_{-\alpha}[x_{\alpha},J(h_j)]\nonumber\\
&=\dfrac{\ve_1+\ve_2}{2}\sum_{\alpha\in\Delta^{\re}_+}c_{i,j}^{\alpha}x_{-\alpha}x_{\alpha}.\label{eqr3}
\end{align}
Therefore, applying \eqref{eqr2} and \eqref{eqr3} to \eqref{eqr1}, we have obtained the relation \eqref{rel1}. By the defining relation \eqref{eq1.2}, we find the equality
\begin{equation}
[J(h_i)-\widetilde{v_i},J(h_j)-\widetilde{v_j}]=[h_{i,1},h_{j,1}]=0.\label{eqr4}
\end{equation}
On the other hand, we find the relation
\begin{equation*}
[J(h_i)-\widetilde{v_i},J(h_j)-\widetilde{v_j}]=[J(h_i),J(h_j)]-[\widetilde{v_i},J(h_j)]-[J(h_i),\widetilde{v_j}]+[\widetilde{v_i},\widetilde{v_j}]\label{eqr5}.
\end{equation*}
By \eqref{rel1}, the right hand side of \eqref{eqr5} is equal to the left hand side of \eqref{rel2}. Thus, by \eqref{eqr4}, we have found that \eqref{rel2} holds.
\end{proof}
Now, we are in position to obtain the proof of Theorem~\ref{Maim}. To simplify the notation, we set
$\square(x)$ as $x\otimes 1+1\otimes x$
for all $x\in Y_{\ve_1,\ve_2}(\widehat{\mathfrak{sl}}(m|n))$.
\begin{proof}[Proof of Theorem \ref{Maim}]
It is enough to check that $\Delta$ is compatible with \eqref{eq2.1}-\eqref{eq2.9}, which are the defining relations of the minimalistic presentation of the affine super Yangian. Since the restriction of $\Delta$ to $\widehat{\mathfrak{sl}}(m|n)$ is nothing but the usual coproduct of $\widehat{\mathfrak{sl}}(m|n)$, $\Delta$ is compatible with \eqref{eq2.2}, \eqref{eq2.7}, \eqref{eq2.8}, and \eqref{eq2.9}. We also know that $\Delta$ is compatible with \eqref{eq2.4} since $\Delta(x^\pm_{i,1})$ is defined as
\begin{equation*}
\begin{cases}
\pm\dfrac{1}{a_{i,i}}[\Delta(\widetilde{h}_{i,1}),\Delta(x^\pm_{i,0})]&\text{if}\quad i\neq m,0,\\
\pm\dfrac{1}{a_{i+1,i}}[\Delta(\widetilde{h}_{i+1,1}),\Delta(x^\pm_{i,0})]+b_{i+1,i}\dfrac{\varepsilon_1 - \varepsilon_2}{2} \Delta(x_{i, 0}^{\pm})&\text{if}\quad i=m,0,
\end{cases}
\end{equation*}
and $\Delta(\widetilde{h}_{i+1,1})$ and $\Delta(\widetilde{h}_{i,1})$ commute with $\Delta(h_{j,0})$ by the definition. We find that the defining relation \eqref{eq2.3} (resp.\ \eqref{eq2.5}, \eqref{eq2.6}) is equivalent to \eqref{3.3.4} (resp.\ \eqref{3.3.2}, \eqref{3.3.3}) by the proof of Lemma~\ref{Lemma39}. It is easy to show that $\Delta$ is compatible with \eqref{3.3.4}, \eqref{3.3.2}, and \eqref{3.3.3} in the same way as that of Theorem 4.9 of \cite{GNW}. Thus, it is enough to show that $\Delta$ is compatible with \eqref{eq2.1}. By the definition of $J(h_i)$, we obtain
\begin{align}
&\phantom{{}={}}[\Delta(h_{i1}), \Delta(h_{j1})]\nonumber\\
&=[\Delta(J(h_i)) - \Delta(\widetilde{v_i}),\Delta(J(h_j)) - \Delta(\widetilde{v_j})]\nonumber\\
&=[\Delta(J(h_i)),\Delta(J(h_j))] + [\Delta(\widetilde{v_i}),\Delta(\widetilde{v_j})] - [\Delta(J(h_i)),\Delta(\widetilde{v_j})] - [\Delta(\widetilde{v_i}),\Delta(J(h_j))],
\end{align}
where $\widetilde{v_i}=v_i+\dfrac{\ve_1+\ve_2}{2}h_i^2$. It is enough to show that 
\begin{equation}
[\Delta(J(h_i)),\Delta(J(h_j))] + [\Delta(\widetilde{v_i}),\Delta(\widetilde{v_j})]=0 \label{JhJh}
\end{equation}
and
\begin{equation} 
[\Delta(J(h_i)),\Delta(\widetilde{v_j})] + [\Delta(\widetilde{v_i}),\Delta(J(h_j))]=0\label{Jhv2} 
\end{equation} 
hold. We only show that \eqref{JhJh} holds. The outline of the proof of \eqref{Jhv2} is the same as that of Theorem 4.9 of \cite{GNW}. In order to simplify the computation, we define
\begin{align*}
\Omega_+&=\sum_{1\leq k \leq\text{dim}\mathfrak{h}}u^k\otimes u_k+\sum_{\alpha\in\Delta_+}\sum_{\substack{1\leq k_\alpha\leq\text{dim}\mathfrak{g}_\alpha}}{(-1)}^{p(\alpha)}x^{k_\alpha}_{\alpha}\otimes x^{k_\alpha}_{-\alpha},\\
\Omega_-&=\sum_{\alpha\in\Delta_+}\sum_{\substack{1\leq k_\alpha\leq\text{dim}\mathfrak{g}_\alpha}}x^{k_\alpha}_{-\alpha}\otimes x^{k_\alpha}_{\alpha},\\
\Omega&=\sum_{1\leq k \leq\text{dim}\mathfrak{h}}u^k\otimes u_k+\sum_{\alpha\in\Delta_+}\sum_{\substack{1\leq k_\alpha\leq\text{dim}\mathfrak{g}_\alpha}}({(-1)}^{p(\alpha)}x^{k_\alpha}_{\alpha}\otimes x^{k_\alpha}_{-\alpha}+x^{k_\alpha}_{-\alpha}\otimes x^{k_\alpha}_{\alpha}),
\end{align*}
where $\{u^k\}$ and $\{u_k\}$ are basis of $\mathfrak{h}$ such that $\kappa(u_k,u^l)=\delta_{k,l}$. By the definition of $J(h_i)$, it is easy to obtain
\begin{equation}
\Delta(J(h_i))=\square(J(h_i))+\dfrac{\ve_1+\ve_2}{2}[h_{i,0}\otimes 1, \Omega]\label{eqr7}
\end{equation}
since we have
\begin{align*}
\Delta(xy)&=(x\otimes1+1\otimes x)(y\otimes 1+1\otimes y)\\
&={(-1)}^{p(1)p(y)}xy\otimes1+{(-1)}^{p(x)p(1)}1\otimes xy+{(-1)}^{p(1)p(1)}x\otimes y+{(-1)}^{p(x)p(y)}y\otimes x
\end{align*}
by the relation $(x\otimes y)(z\otimes w)={(-1)}^{p(y)p(z)}xz\otimes yw$ for all homogeneous elements $x,y,z,w$.
Thus, by \eqref{eqr7}, we obtain
\begin{align*}
&\phantom{{}={}}[\Delta (J(h_i)), \Delta (J(h_j))]\\\nonumber
&= \square ([J(h_i), J(h_j)])+ \dfrac{\ve_1+\ve_2}{2}[\square (J(h_i)), [h_{j,0}\otimes 1,\Omega]]\\\nonumber
&\quad - \dfrac{\ve_1+\ve_2}{2}[\square (J(h_j)), [h_{i,0}\otimes 1, \Omega]]+ \dfrac{(\ve_1+\ve_2)^2}{4} [[h_{i,0}\otimes 1, \Omega], [h_{j,0}\otimes 1, \Omega]].
\end{align*}
First, we prove that
\begin{equation}\label{Jhv3}
\dfrac{\ve_1+\ve_2}{2}[\square (J(h_i)), [h_{j,0}\otimes 1,\Omega]] - \dfrac{\ve_1+\ve_2}{2}[\square (J(h_j)), [h_{i,0}\otimes 1, \Omega]]=0
\end{equation}
holds. Since $[h_{j,0}\otimes 1,\Omega]=\sum_{\alpha\in\Delta^{\re}_+} (\alpha,\alpha_i)(x_{-\alpha}\otimes x_\alpha-x_\alpha\otimes x_{-\alpha})$ holds, we have
\begin{align*}
&\phantom{{}={}}[\square (J(h_i)), [h_{j,0}\otimes 1,\Omega]]-[\square (J(h_j)), [h_{i,0}\otimes 1,\Omega]]\\
&=\sum_{\alpha\in\Delta^{\re}_+} (\alpha,\alpha_j)({(-1)}^{p(\alpha)}[J(h_i),x_\alpha]\otimes x_{-\alpha} -{(-1)}^{p(\alpha)} x_\alpha\otimes [J(h_i),x_{-\alpha}])\\
&\qquad\qquad\qquad\qquad\qquad\qquad+[J(h_i),x_{-\alpha}]\otimes x_\alpha -x_{-\alpha}\otimes [J(h_i),x_\alpha])\\
&\quad-\sum_{\alpha\in\Delta^{\re}_+} (\alpha,\alpha_i)({(-1)}^{p(\alpha)}[J(h_j),x_\alpha]\otimes x_{-\alpha} - {(-1)}^{p(\alpha)}x_\alpha\otimes [J(h_j),x_{-\alpha}])\\
&\qquad\qquad\qquad\qquad\qquad\qquad+[J(h_j),x_{-\alpha}]\otimes x_\alpha -x_{-\alpha}\otimes [J(h_j),x_\alpha])\\
&=\sum_{\alpha\in\Delta^{\re}_+}(\alpha,\alpha_j)(\alpha,\alpha_i)c_{i,j}^\alpha({(-1)}^{p(\alpha)}x_\alpha\otimes x_{-\alpha}-{(-1)}^{p(\alpha)}x_\alpha\otimes x_{-\alpha}+x_{-\alpha}\otimes x_\alpha-x_{-\alpha}\otimes x_\alpha)\\
&\quad-\sum_{\alpha\in\Delta^{\re}_+}(\alpha,\alpha_i)(\alpha,\alpha_j)c_{j,i}^\alpha({(-1)}^{p(\alpha)}x_\alpha\otimes x_{-\alpha}-{(-1)}^{p(\alpha)}x_\alpha\otimes x_{-\alpha}+x_{-\alpha}\otimes x_\alpha-x_{-\alpha}\otimes x_\alpha)\\
&=0.
\end{align*}
where the third equality is due to Proposition~\ref{Prop1}.
Therefore \eqref{Jhv3} holds.
Since $\Delta(\widetilde v_i)=\square(\widetilde v_i)-\dfrac{\ve_1+\ve_2}{2}[h_{i,0}\otimes 1, \Omega_+-\Omega_-]$ holds,
we obtain
\begin{align*}
&\phantom{{}={}}[\Delta (\widetilde v_i), \Delta (\widetilde v_j)]\\
&=\square ([\widetilde v_i, \widetilde v_j])+ \dfrac{\ve_1+\ve_2}{2}( - [\square (\widetilde v_i), [h_{j,0}\otimes 1, \Omega_+-\Omega_-]]+ [\square (\widetilde v_j), [h_{i,0}\otimes 1, \Omega_+-\Omega_-]])\\
&\quad + \dfrac{(\ve_1+\ve_2)^2}{4} [[h_{i,0}\otimes 1, \Omega_+-\Omega_-],[h_{j,0}\otimes 1, \Omega_+-\Omega_-]].
\end{align*}
Using this along with $\Omega = \Omega_+ + \Omega_-$ and \eqref{Jhv3}, we find the equality
\begin{align}
&\phantom{{}={}}[\Delta (J(h_i)), \Delta (J(h_j))] + [\Delta (\widetilde v_i), \Delta (\widetilde v_j)]\nonumber\\
&=\square([J(h_i), J(h_j)] + [\widetilde v_i, \widetilde v_j])\nonumber\\
&\quad+ \dfrac{\ve_1+\ve_2}{2}(- [\square (\widetilde v_i), [h_{j,0}\otimes 1, \Omega_+-\Omega_-]]+ [\square (\widetilde v_j), [h_{i,0}\otimes 1, \Omega_+-\Omega_-]])\nonumber\\
&\quad+ \dfrac{(\ve_1+\ve_2)^2}{2}([[h_{i,0}\otimes 1, \Omega_+],[h_{j,0}\otimes 1, \Omega_+]] +[[h_{i,0}\otimes 1, \Omega_-], [h_{j,0}\otimes 1, \Omega_-]]).\label{6660}
\end{align}
By the same way as the one of Theorem~4.9 in \cite{GNW}, we can check that the sum of the last four terms of the right hand side of \eqref{6660} vanishes. 
By Corollary~\ref{C1}, $\square([J(h_i), J(h_j)] + [\widetilde v_i, \widetilde v_j])=0$ holds. The coassocitivity is proven in a similar way to the one of \cite{GNW}. We complete the proof.
\end{proof}
By setting the degree on $Y_{\ve_1,\ve_2}(\widetilde{\mathfrak{sl}}(m|n))$ determined by \eqref{degree} and $\text{deg}(d)=0$, we can define the $\widehat{Y}_{\ve_1,\ve_2}(\widetilde{\mathfrak{sl}}(m|n))$ (resp. $Y_{\ve_1,\ve_2}(\widetilde{\mathfrak{sl}}(m|n)) \widehat{\otimes}  Y_{\ve_1,\ve_2}(\widetilde{\mathfrak{sl}}(m|n))$) as the degreewise completion of $Y_{\ve_1,\ve_2}(\widetilde{\mathfrak{sl}}(m|n))$ (resp. $Y_{\ve_1,\ve_2}(\widetilde{\mathfrak{sl}}(m|n))^{\otimes 2}$) in the sense of \cite{MNT}. We regard a represenation of $Y_{\ve_1,\ve_2}(\widetilde{\mathfrak{sl}}(m|n))$ as that of $\widetilde{\mathfrak{sl}}(m|n)$ via $\Phi$. 
By Theorem~\ref{Maim}, we easily obtain the following corollary.
\begin{Corollary}
The linear map
${\Delta} \colon Y_{\ve_1,\ve_2}(\widetilde{\mathfrak{sl}}(m|n))\rightarrow Y_{\ve_1,\ve_2}(\widetilde{\mathfrak{sl}}(m|n)) \widehat{\otimes}  Y_{\ve_1,\ve_2}(\widetilde{\mathfrak{sl}}(m|n))$
uniquely determined by
\begin{gather*}
{\Delta}(h_{i,0})={h_{i,0}}{\otimes}1+1{\otimes}{h_{i,0}},\quad{\Delta}(x^{\pm}_{i,0})={x^{\pm}_{i,0}}{\otimes}1+1{\otimes}x^{\pm}_{i,0}\quad\Delta(d)=d\otimes 1+1\otimes d,\nonumber\\
{\Delta}(h_{i,1})={h_{i,1}}{\otimes}1+1{\otimes}{h_{i,1}}+(\ve_1+\ve_2){h_{i,0}}{\otimes}{h_{i,0}}-(\ve_1+\ve_2)\sum_{{\alpha}\in{\Delta}_{+}}\sum_{1\leq k_\alpha\leq\text{dim}\mathfrak{g}_\alpha}(\alpha, {\alpha}_i)x^{k_\alpha}_{-\alpha}{\otimes}x^{k_\alpha}_{\alpha}
\end{gather*}
is an algebra homomorphism. Moreover, $\Delta$ satisfies the coassociativity. 

  In particular,
 $\Delta$ defines an action on
 $Y_{\ve_1,\ve_2}(\widetilde{\mathfrak{sl}}(m|n))$
 on $V\otimes W$
 for any
$Y_{\ve_1,\ve_2}(\widetilde{\mathfrak{sl}}(m|n))$-modules $V, W$ which are in the category $\mathcal{O}$ as $\widetilde{\mathfrak{sl}}(m|n)$-modules.
\end{Corollary}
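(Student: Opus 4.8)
The plan is to deduce everything from Theorem~\ref{Maim} together with the minimalistic presentation of $Y_{\ve_1,\ve_2}(\widetilde{\mathfrak{sl}}(m|n))$ given in Theorem~\ref{Mimi}. By that presentation, $Y_{\ve_1,\ve_2}(\widetilde{\mathfrak{sl}}(m|n))$ is generated by the same elements $x^\pm_{i,r},h_{i,r}$ ($r=0,1$) as $Y_{\ve_1,\ve_2}(\widehat{\mathfrak{sl}}(m|n))$ together with $d$, subject to \eqref{eq2.1}--\eqref{eq2.9} and the extra relations \eqref{eq2.10}. Compatibility of $\Delta$ with \eqref{eq2.1}--\eqref{eq2.9} is exactly the content of Theorem~\ref{Maim}, since the formulas defining $\Delta$ on $x^\pm_{i,r},h_{i,r}$ are unchanged. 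Hence the only genuinely new point is to check that the assignment $\Delta(d)=d\otimes 1+1\otimes d$ is compatible with \eqref{eq2.10}; once this is done, $\Delta$ extends uniquely to an algebra homomorphism on $Y_{\ve_1,\ve_2}(\widetilde{\mathfrak{sl}}(m|n))$.

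The key observation I would use is that $\Delta$ is a \emph{graded} map for the grading \eqref{degree} extended by $\deg(d)=0$, and that the relations \eqref{eq2.10} simply say that $d$ acts on the generators as the grading derivation, i.e.\ $[d,z]=\deg(z)\,z$. First I would record that each $\Delta(z)$ is homogeneous of the same degree as $z$: this is immediate for $\Delta(h_{i,0})$ and $\Delta(x^\pm_{i,0})$, while for $\Delta(h_{i,1})$ it follows because in every summand $x^{k_\alpha}_{-\alpha}\otimes x^{k_\alpha}_{\alpha}$ of the correction term in \eqref{Coproduct} the two tensor factors carry opposite degrees $\mp n_0$, where $n_0$ is the coefficient of $\alpha_0$ in $\alpha$, so the summand has total degree $0$; the elements $\Delta(x^\pm_{i,1})$ are then homogeneous because they are built from degree-preserving brackets of $\Delta(\widetilde{h}_{i,1})$ and $\Delta(x^\pm_{i,0})$. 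A direct check using the super tensor multiplication rule shows that $d\otimes 1+1\otimes d$ acts on a homogeneous element $a\otimes b$ of the completed tensor product as multiplication by the total degree $\deg(a)+\deg(b)$. Combining these two facts gives $[\Delta(d),\Delta(z)]=\deg(z)\,\Delta(z)=\Delta([d,z])$ for every generator $z$, which is precisely compatibility with \eqref{eq2.10}.

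For coassociativity it suffices to note that $d$ is primitive, so $(\Delta\otimes\id)\Delta(d)=(\id\otimes\Delta)\Delta(d)=d\otimes1\otimes1+1\otimes d\otimes1+1\otimes1\otimes d$ holds trivially; together with the coassociativity already established on the remaining generators in Theorem~\ref{Maim}, and since both $(\Delta\otimes\id)\Delta$ and $(\id\otimes\Delta)\Delta$ are algebra homomorphisms agreeing on a generating set, they coincide. Finally, for the module statement I would invoke the category $\mathcal{O}$ hypothesis: for $v\otimes w\in V\otimes W$ the infinite sum over $\alpha\in\Delta_+$ in \eqref{Coproduct} acts with only finitely many nonzero terms, because in category $\mathcal{O}$ the weights of $W$ lie in a finite union of sets $\lambda-\sum_{i}\mathbb{Z}_{\ge0}\alpha_i$, which forces $x^{k_\alpha}_\alpha w=0$ for all but finitely many positive roots $\alpha$; hence the completed coproduct descends to a genuine finite action on $V\otimes W$. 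The one place demanding care---the main technical point rather than a real obstacle---is the verification that the correction term of $\Delta(h_{i,1})$ is homogeneous of degree $0$, so that $\Delta(d)$ really acts termwise as the total degree on the completion; everything else is formal once Theorem~\ref{Maim} is in hand.
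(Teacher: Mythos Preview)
Your proposal is correct and is precisely the argument the paper has in mind: the paper itself gives no proof beyond the sentence ``By Theorem~\ref{Maim}, we easily obtain the following corollary,'' and your write-up supplies exactly the missing details---reducing to the minimalistic presentation of Theorem~\ref{Mimi}, invoking Theorem~\ref{Maim} for the relations \eqref{eq2.1}--\eqref{eq2.9}, and handling the new relations \eqref{eq2.10} via the observation that $\Delta$ is degree-preserving while $d\otimes 1+1\otimes d$ acts as the total degree. The coassociativity and category $\mathcal{O}$ remarks are likewise the intended (and correct) justifications.
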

\section{Evaluation map for the Affine Super Yangian}
Since the definition of the affine super Yangian is very complicated, it is not clear whether the affine super Yangian is trivial or not. In this section, we construct the non-trivial homomorphism from the affine super Yangian to the completion of $U(\widehat{\mathfrak{gl}}(m|n))$. 
In this section, we set $\widehat{\mathfrak{gl}}(m|n)=\mathfrak{gl}(m|n)\otimes\mathbb{C}[t^{\pm1}]\oplus\mathbb{C}c\oplus\mathbb{C}z$ as a Lie superalgebra whose defining relations are 
\begin{gather*}
\text{$c,z$ are a central elements},\\
[x\otimes t^s, y\otimes t^u]=\begin{cases}
[x,y]\otimes t^{s+u}+s\delta_{s+u,0}\text{str}(xy)c\quad\text{ if }x,\ y\in\mathfrak{sl}(m|n),\\
[e_{a,b},e_{i,i}]\otimes t^{s+u}+s\delta_{s+u,0}\text{str}(e_{a,b}e_{i,i})c+s\delta_{a,b}{(-1)}^{p(a)+p(i)}z\\
\qquad\qquad\qquad\qquad\qquad\qquad\qquad\text{ if }x=e_{a,b},\ y=e_{i,i}.
\end{cases}
\end{gather*}
For all $s\in\mathbb{Z}$, we denote $E_{i,j}\otimes t^s$ by $E_{i,j}(s)$. 
We also set the grading of $U(\widehat{\mathfrak{gl}}(m|n))/U(\widehat{\mathfrak{gl}}(m|n))(z-1)$ as $\text{deg}(X(s))=s$ and $\text{deg}(c)=0$. We define $U(\widehat{\mathfrak{gl}}(m|n))_{{\rm comp},+}$ as the standard degreewise completion of $U(\widehat{\mathfrak{gl}}(m|n))/U(\widehat{\mathfrak{gl}}(m|n))(z-1)$ in the sense of Definition~\ref{Defin}. 

Let us state the main result of this section. In order to simplify the notation, we denote $\ve_1+\ve_2$ as $\hbar$.
\begin{Theorem}\label{thm:main}
Assume $c\hbar=( -m+n) \ve_1$ and $z=1$. Let $\alpha$ be a complex number.
Then, there exists an algebra homomorphism $\ev \colon Y_{\ve_1,\ve_2}(\widehat{\mathfrak{sl}}(m|n)) \to U(\widehat{\mathfrak{gl}}(m|n))_{{\rm comp},+}$ uniquely determined by 
\begin{gather}
	\ev(x_{i,0}^{+}) = x_{i}^{+}, \quad \ev(x_{i,0}^{-}) = x_{i}^{-},\quad \ev(h_{i,0}) = h_{i},\label{evalu1}
\end{gather}
\begin{align}
	\ev(x_{i,1}^{+}) = \begin{cases}
		(\alpha - (m-n) \varepsilon_1) x_{0}^{+} + \hbar \displaystyle\sum_{s \geq 0} \limits\displaystyle\sum_{k=1}^{m+n}\limits {(-1)}^{p(k)}E_{m+n,k}(-s) E_{k,1}(s+1) \text{ if $i = 0$},\\
		\\
		(\alpha - (i-2\delta(i\geq m+1)(i-m)) \varepsilon_1) x_{i}^{+}\\
		 \quad+ \hbar \displaystyle\sum_{s \geq 0}\limits\displaystyle\sum_{k=1}^i\limits {(-1)}^{p(k)}E_{i,k}(-s) E_{k,i+1}(s)\\
		 \qquad +\hbar \displaystyle\sum_{s \geq 0}\limits\displaystyle\sum_{k=i+1}^{m+n}\limits {(-1)}^{p(k)}E_{i,k}(-s-1) E_{k,i+1}(s+1)\\
		\qquad\qquad\qquad\qquad\qquad\qquad\qquad\qquad\qquad\qquad\qquad\qquad\qquad\qquad \text{ if $i \neq 0$},
	\end{cases}\label{evalu2}
\end{align}
\begin{gather}
	\ev(x_{i,1}^-) = \begin{cases}
		(\alpha - (m-n) \varepsilon_1) x_{0}^{-} - \hbar \displaystyle\sum_{s \geq 0} \limits\displaystyle\sum_{k=1}^{m+n}\limits {(-1)}^{p(k)}E_{1,k}(-s-1) E_{k,m+n}(s) \text{ if $i = 0$},\\
		\\
		(\alpha - (i-2\delta(i\geq m+1)(i-m)) \varepsilon_1) x_{i}^{-}\\
		 \quad+ {(-1)}^{p(i)}\hbar \displaystyle\sum_{s \geq 0}\limits\displaystyle\sum_{k=1}^i\limits {(-1)}^{p(k)}E_{i+1,k}(-s) E_{k,i}(s)\\
		 \qquad+ {(-1)}^{p(i)}\hbar \displaystyle\sum_{s \geq 0}\limits\displaystyle\sum_{k=i+1}^{m+n}\limits {(-1)}^{p(k)}E_{i+1,k}(-s-1) E_{k,i}(s+1) \\
		\qquad\qquad\qquad\qquad\qquad\qquad\qquad\qquad\qquad\qquad\qquad\qquad\qquad\qquad\text{ if $i \neq 0$},
	\end{cases}\label{evalu3}
\end{gather}
\begin{gather}
	\ev(h_{i,1}) = \begin{cases}
		(\alpha - (m-n) \varepsilon_1)h_{0} + \hbar E_{m+n,m+n} (E_{1,1}-c) \\
		\ -\hbar \displaystyle\sum_{s \geq 0} \limits\displaystyle\sum_{k=1}^{m+n}\limits{(-1)}^{p(k)}E_{m+n,k}(-s) E_{k,m+n}(s)\\
		\quad-\hbar \displaystyle\sum_{s \geq 0} \quad\displaystyle\sum_{k=1}^{m+n}\limits{(-1)}^{p(k)}E_{1,k}(-s-1) E_{k,1}(s+1)\\
		 \qquad\qquad\qquad\qquad\qquad\qquad\qquad\qquad\qquad\qquad\qquad\qquad\qquad\qquad\text{ if $i = 0$},\\
\\
		(\alpha - (i-2\delta(i\geq m+1)(i-m))\varepsilon_1) h_{i} -{(-1)}^{p(E_{i,i+1})} \hbar E_{i,i}E_{i+1,i+1} \\
		\quad + \hbar{(-1)}^{p(i)} \displaystyle\sum_{s \geq 0}  \limits\displaystyle\sum_{k=1}^{i}\limits{(-1)}^{p(k)} E_{i,k}(-s) E_{k,i}(s)\\
		\qquad +\hbar{(-1)}^{p(i)} \displaystyle\sum_{s \geq 0} \limits\displaystyle\sum_{k=i+1}^{m+n}\limits {(-1)}^{p(k)}E_{i,k}(-s-1) E_{k,i}(s+1) \\
		\qquad\quad -\hbar{(-1)}^{p(i+1)}\displaystyle\sum_{s \geq 0}\limits\displaystyle\sum_{k=1}^{i}\limits{(-1)}^{p(k)}E_{i+1,k}(-s) E_{k,i+1}(s)\\
		 \qquad\qquad-\hbar{(-1)}^{p(i+1)}\displaystyle\sum_{s \geq 0}\limits\displaystyle\sum_{k=i+1}^{m+n} \limits{(-1)}^{p(k)}E_{i+1,k}(-s-1) E_{k,i+1}(s+1)\\
		\qquad\qquad\qquad\qquad\qquad\qquad\qquad\qquad\qquad\qquad\qquad\qquad\qquad\qquad \text{ if $i \neq 0$}.
	\end{cases}\label{evalu4}
\end{gather}
\end{Theorem}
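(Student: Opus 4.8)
The plan is to invoke the minimalistic presentation of Theorem~\ref{Mini}: it suffices to define $\ev$ on the finite generating set $\{x_{i,r}^\pm, h_{i,r}\mid 0\le i\le m+n-1,\ r=0,1\}$ by \eqref{evalu1}--\eqref{evalu4} and to check that the images satisfy the defining relations \eqref{eq2.1}--\eqref{eq2.9}. Before touching relations I would first verify that the right-hand sides of \eqref{evalu2}--\eqref{evalu4} actually lie in $U(\widehat{\mathfrak{gl}}(m|n))_{{\rm comp},+}$: although each is an infinite sum over $s\ge 0$, with respect to the grading $\mathrm{deg}(X(s))=s$ every fixed homogeneous component receives only finitely many contributions, so the sums converge in the degreewise completion of Definition~\ref{Defin}.

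The relations involving only the degree-zero generators---namely \eqref{eq2.2}, the $r=0$ case of \eqref{eq2.4}, and the Serre-type relations \eqref{eq2.7}, \eqref{eq2.8}, \eqref{eq2.9}---hold automatically. Indeed, the assignments $x_{i,0}^\pm\mapsto x_i^\pm$ and $h_{i,0}\mapsto h_i$ extend the Lie superalgebra embedding $\widehat{\mathfrak{sl}}(m|n)\hookrightarrow\widehat{\mathfrak{gl}}(m|n)$ coming from the Chevalley presentation of Proposition~\ref{Prop2} (sharing the central element $c$), so these relations are inherited verbatim from $\widehat{\mathfrak{sl}}(m|n)$.

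The substance of the proof is the verification of the relations carrying a degree-one generator, which I would attack in the following order. First I would check \eqref{eq2.4} at $r=1$, i.e. that $\ev(x_{j,1}^\pm)$ is a weight vector of weight $\pm\alpha_j$ under $\ad(h_i)$; this is a direct weight count on the quadratic sums and it pins down the linear coefficients $\alpha-(\cdots)\ve_1$. Next I would establish \eqref{eq2.3}, computing $[\ev(x_{i,1}^+),x_{j,0}^-]$ and $[x_{i,0}^+,\ev(x_{j,1}^-)]$ and showing both equal $\delta_{i,j}\ev(h_{i,1})$; this confirms the shape of \eqref{evalu4}. I would then treat \eqref{eq2.5} for $\widetilde h_{i,1}=h_{i,1}-\tfrac{\ve_1+\ve_2}{2}h_{i,0}^2$, the commutativity relations \eqref{eq2.1} with an $h_{\cdot,1}$, and finally the quadratic relation \eqref{eq2.6}. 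To organize all of these I would isolate a preliminary lemma recording the commutator of a normal-ordered quadratic sum $\sum_{s\ge 0}\sum_k (-1)^{p(k)}E_{a,k}(-s)E_{k,b}(s)$ against a single current $E_{c,d}(u)$: the affine bracket $[x\otimes t^s,y\otimes t^u]=[x,y]\otimes t^{s+u}+s\delta_{s+u,0}\,\mathrm{str}(xy)c$ produces a transport term together with a central term, and it is the latter that the hypothesis $c\hbar=(-m+n)\ve_1$ is engineered to absorb.

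The hard part will be \eqref{eq2.6}, together with the consistency of $\ev(h_{i,1})$ forced by \eqref{eq2.3} and the commutativity $[\ev(h_{i,1}),\ev(h_{j,1})]=0$ inside \eqref{eq2.1}. Here one must commute two doubly-infinite normal-ordered sums inside the completion, track at every step the super signs $(-1)^{p(k)}$ generated by the bracket $[x,y]=xy-(-1)^{p(x)p(y)}yx$, and show that the anomalous central contributions arising across $s+u=0$ collapse to the prescribed right-hand sides precisely when $c\hbar=(-m+n)\ve_1$. Sign bookkeeping in the super setting and the reordering/convergence of the infinite sums are the two places where the argument is most delicate; the non-super prototype is Guay's evaluation map \cite{Gu} as reproved by Kodera \cite{K2}, whose strategy I would follow while inserting the parities at each step.
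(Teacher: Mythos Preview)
Your plan is correct and matches the paper's own proof almost exactly: both invoke the minimalistic presentation, note that the degree-zero relations \eqref{eq2.2}, \eqref{eq2.4}, \eqref{eq2.7}--\eqref{eq2.9} follow from the embedding $\widehat{\mathfrak{sl}}(m|n)\hookrightarrow\widehat{\mathfrak{gl}}(m|n)$, isolate a preliminary commutator lemma for the normal-ordered quadratic sums (the paper's equations \eqref{equation154}--\eqref{equation155}), and then verify \eqref{eq2.3}, \eqref{eq2.5}, \eqref{eq2.6}, \eqref{eq2.1} by direct computation following the non-super template of \cite{K1}. The one device the paper uses that you do not mention is the anti-automorphism $\omega(X\otimes t^r)=(-1)^r X^T\otimes t^r$, $\omega(c)=c$, which satisfies $\omega(\ev(h_{i,1}))=\ev(h_{i,1})$ and $\omega(\ev(x_{i,1}^+))=(-1)^{p(i)}\ev(x_{i,1}^-)$ and therefore halves the work by deducing the $-$ cases of \eqref{eq2.5} and \eqref{eq2.6} from the $+$ cases.
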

The outline of the proof is the same as that of \cite{K1}. It is enough to check that $\ev$ is compatible with \eqref{eq2.1}-\eqref{eq2.9}, which are the defining relations of the minimalistic presentation of the affine super Yangian. When we restrict $\ev$ to $\widehat{\mathfrak{sl}}(m|n)$, $\ev$ is an identity map on $\widehat{\mathfrak{sl}}(m|n)$. Thus, $\ev$ is compatible with \eqref{eq2.2}, \eqref{eq2.4}, \eqref{eq2.7}-\eqref{eq2.9}.

We set a anti-automorphism $\omega\colon U(\widehat{\mathfrak{gl}}(m|n))\to U(\widehat{\mathfrak{gl}}(m|n))$ as
\begin{gather*}
\omega(X\otimes t^r)={(-1)}^rX^{T}\otimes t^r,\qquad\omega(c)=c,
\end{gather*}
where $X^{T}$ is a transpose of a matrix $X$. Then, the compatibility of $\ev$ with \eqref{eq2.5} and \eqref{eq2.6} for $-$ are deduced from those for $+$ by applying the anti-automorphism $\omega$ since we have $\omega(\ev(h_{i,1}))=\ev(h_{i,1})$ and $\omega(\ev(x_{i,1}^+))={(-1)}^{p(i)}\ev(x_{i,1}^{-})$.
Therefore, it is enough to check the following lemma.
\begin{Lemma}\label{Lemma1}
The following equations hold;
\begin{gather}
[\ev(x^+_{i,1}),\ev(x^-_{j,0})]=\delta_{i,j}\ev(h_{i,1}),\label{rel4}\\
[\ev(\widetilde{h}_{i,1}),x^+_j]=a_{i,j}(\ev(x^+_{j,1})-b_{i,j}\dfrac{\ve_1-\ve_2}{2}x^+_j),\label{rel5}\\
[\ev(x^+_{i,1}),x^+_j]-[x^+_i,\ev(x^+_{j,1})]=a_{i,j}\dfrac{\ve_1+\ve_2}{2}\{x^+_i, x^+_j\}-b_{i,j}\dfrac{\ve_1-\ve_2}{2}[x^+_i, x^+_j],\label{rel6}\\
[\ev(h_{i,1}),\ev(h_{j,1})]=0.\label{rel7}
\end{gather}
\end{Lemma}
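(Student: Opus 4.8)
The plan is to verify \eqref{rel4}--\eqref{rel7} by explicit computation inside $U(\widehat{\mathfrak{gl}}(m|n))_{{\rm comp},+}$, using only the defining commutators of $\widehat{\mathfrak{gl}}(m|n)$, the explicit formulas \eqref{evalu1}--\eqref{evalu4}, and the fact that $\ev$ restricts to the identity on $\widehat{\mathfrak{sl}}(m|n)$, so that $\ev(x^\pm_j)=x^\pm_j$ and $\ev(h_j)=h_j$ on the degree-$0$ generators. The elements $\ev(x^\pm_{i,1})$ and $\ev(h_{i,1})$ are infinite sums of quadratic expressions $E_{a,b}(-s)E_{c,d}(s')$ living in the degreewise completion; a bracket of such an element with a fixed $E_{a,b}(s)$, or with another such sum, is to be computed within the filtration defining the completion, where at each filtration level only finitely many terms contribute, so the formal manipulations are legitimate. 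Throughout, the signs ${(-1)}^{p(k)}$ and the central corrections $s\delta_{s+u,0}\mathrm{str}(\cdot)c$ produced by the $\widehat{\mathfrak{gl}}(m|n)$ relations must be tracked carefully; this is where the hypothesis $c\hbar=(-m+n)\ve_1$ and the scalar shifts in \eqref{evalu2}--\eqref{evalu4} are engineered to yield the correct coefficients.

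First I would establish \eqref{rel5}, which governs how $\ev(\widetilde h_{i,1})=\ev(h_{i,1})-\tfrac{\hbar}{2}h_i^2$ acts by the adjoint on the Chevalley generators. Commuting the double sum in \eqref{evalu4} with $x^+_j$ and reorganizing the surviving terms, the result reassembles into the formula \eqref{evalu2} for $\ev(x^+_{j,1})$ together with the correction $-b_{i,j}\tfrac{\ve_1-\ve_2}{2}x^+_j$, the cases with $a_{i,j}=0$ vanishing for support reasons. Next I would treat \eqref{rel4}: since $\ev(x^-_{j,0})=x^-_j$ lies in $\widehat{\mathfrak{sl}}(m|n)$, commuting it with \eqref{evalu2} collapses the off-diagonal cases $i\neq j$ term by term, while the diagonal case $i=j$ reproduces \eqref{evalu4}. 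It is precisely here that the coefficient of $c$ coming from the central terms must match the $(\alpha-(m-n)\ve_1)h_i$ piece, and the constraint $c\hbar=(-m+n)\ve_1$ is what makes this identity hold. Relation \eqref{rel6} is then a difference of two infinite sums that collapses to a finite expression, checked case by case in $|i-j|$ by comparing $[\ev(x^+_{i,1}),x^+_j]$ with $[x^+_i,\ev(x^+_{j,1})]$ directly against the stated right-hand side.

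The main obstacle is \eqref{rel7}, the commutativity $[\ev(h_{i,1}),\ev(h_{j,1})]=0$, since both arguments are infinite quadratic sums whose bracket a priori produces a large number of cubic and central contributions. Rather than a head-on cancellation, I would exploit the identity $\ev(h_{j,1})=[\ev(x^+_{j,1}),x^-_j]$, which is \eqref{rel4} with $i=j$, together with the (even, hence sign-free) Jacobi identity to write
\begin{equation*}
[\ev(h_{i,1}),\ev(h_{j,1})]=[[\ev(h_{i,1}),\ev(x^+_{j,1})],x^-_j]+[\ev(x^+_{j,1}),[\ev(h_{i,1}),x^-_j]].
\end{equation*}
The second bracket is controlled by the $-$ form of \eqref{rel5} obtained via $\omega$, and the first reduces matters to the action of $\ev(h_{i,1})$ on $\ev(x^+_{j,1})$, which can itself be unwound using \eqref{rel5} and \eqref{rel6}. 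As an organizing principle and consistency check, note that $[\ev(h_{i,1}),\ev(h_{j,1})]$ is antisymmetric under $i\leftrightarrow j$, so after carrying out the reduction it suffices to display the remaining expression as symmetric in $i$ and $j$ in order to force it to vanish. I expect arranging the quadratic-quadratic bracket so that this symmetry becomes transparent, and verifying that all the central ($c$-dependent) terms cancel, to be the most delicate part of the whole argument.
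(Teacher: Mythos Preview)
Your treatment of \eqref{rel4}--\eqref{rel6} follows the same direct-computation strategy as the paper and is fine in outline (one small remark: the constraint $c\hbar=(-m+n)\ve_1$ is actually needed in the boundary cases of \eqref{rel5} and \eqref{rel6} involving the node $0$, not primarily in \eqref{rel4}).

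The genuine problem is your plan for \eqref{rel7}. Writing $\ev(h_{j,1})=[\ev(x^+_{j,1}),x^-_j]$ and applying Jacobi produces the two inner brackets $[\ev(h_{i,1}),\ev(x^+_{j,1})]$ and $[\ev(x^+_{j,1}),\ev(x^-_{j,1})]$ (the latter arising from $[\ev(h_{i,1}),x^-_j]$ via the minus form of \eqref{rel5}). Neither of these is controlled by \eqref{rel4}--\eqref{rel6}, which only handle commutators in which at least one argument is a degree-zero generator $x^\pm_j$. If you then try to ``unwind'' $\ev(x^+_{j,1})$ using \eqref{rel5} as $\tfrac{1}{a_{k,j}}[\ev(\widetilde h_{k,1}),x^+_j]+\text{(scalar)}\,x^+_j$, the Jacobi identity immediately reintroduces $[\ev(h_{i,1}),\ev(h_{k,1})]$, which is precisely \eqref{rel7}; the argument becomes circular. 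The antisymmetry observation does not rescue this: to exhibit the residual expression as symmetric in $i,j$ you must first evaluate those level-one/level-one brackets, and there is no shortcut to doing so.

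The paper therefore abandons the Jacobi reduction and computes $[\ev(h_{i,1}),\ev(h_{j,1})]$ head-on: it decomposes $\ev(h_{i,1})$ (up to scalars and the finite piece) into four blocks $A_i,B_i,C_i,D_i$ according to the range of the internal index $k$ and the sign of the loop degree, and then verifies directly that the sixteen cross-brackets cancel in four groups, e.g.\ $[A_i,A_j]+[B_i,A_j]+[B_i,B_j]=0$. This is laborious but elementary, and it is the step you would need to supply.
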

The rest of the paper is devoted to the proof of Lemma~\ref{Lemma1}.
\subsection{The proof of \eqref{rel4}}
We prepare one claim before starting the proof. 
\begin{Claim}
The following relations hold;
\begin{align}
&\quad[\displaystyle\sum_{s \geq p}  \limits\displaystyle\sum_{k=1}^{a}\limits{(-1)}^{p(k)} E_{i,k}(-s) E_{k,j}(s),E_{x,y}]\\
&=\delta_{j,x}\displaystyle\sum_{s \geq p}  \limits\displaystyle\sum_{k=1}^{a}\limits{(-1)}^{p(k)} E_{i,k}(-s) E_{k,y}(s)-{(-1)}^{p(E_{i,j})p(E_{x,y})}\displaystyle\sum_{s \geq p}  \limits\displaystyle\sum_{k=1}^{a}\limits{(-1)}^{p(k)} E_{x,k}(-s) E_{k,j}(s)\nonumber\\
&\quad+\{\delta(x\leq a<y)-\delta(x>a\geq y)\}\displaystyle\sum_{s \geq p}  \limits{(-1)}^{p(x)+p(E_{x,j})p(E_{x,y})} E_{i,y}(-s) E_{x,j}(s),\label{equation154}\\
&\quad[\displaystyle\sum_{s \geq p}  \limits\displaystyle\sum_{k=a}^{m+n}\limits{(-1)}^{p(k)} E_{i,k}(-s) E_{k,j}(s),E_{x,y}]\nonumber\\
&=\delta_{j,x}\displaystyle\sum_{s \geq p}  \limits\displaystyle\sum_{k=a}^{m+n}\limits{(-1)}^{p(k)} E_{i,k}(-s) E_{k,y}(s)-{(-1)}^{p(E_{i,j})p(E_{x,y})}\displaystyle\sum_{s \geq p}  \limits\displaystyle\sum_{k=a}^{m+n}\limits{(-1)}^{p(k)} E_{x,k}(-s) E_{k,j}(s)\nonumber\\
&\quad+\{\delta(x\geq a>y)-\delta(x<a\leq y)\}\displaystyle\sum_{s \geq p}  \limits{(-1)}^{p(x)+p(E_{x,j})p(E_{x,y})} E_{i,y}(-s) E_{x,j}(s).\label{equation155}
\end{align}
\end{Claim}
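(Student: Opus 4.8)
The two displayed identities \eqref{equation154} and \eqref{equation155} are established by the same direct computation in the degreewise completion $U(\widehat{\mathfrak{gl}}(m|n))_{{\rm comp},+}$: they differ only in the range of the summation index $k$ (namely $1\le k\le a$ versus $a\le k\le m+n$), so I would carry out \eqref{equation154} in full and obtain \eqref{equation155} verbatim over the complementary range, the inequalities in the step functions being reversed accordingly. Since $E_{x,y}=E_{x,y}(0)$ has degree $0$, the map $\ad(E_{x,y})=[-,E_{x,y}]$ preserves degrees and is continuous for the topology of Definition~\ref{Defin}; hence the bracket of the (degreewise finite) infinite sum may be computed summand by summand.

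The plan is to apply the super-Leibniz rule $[AB,C]=A[B,C]+(-1)^{p(B)p(C)}[A,C]B$ to each summand, with $A=E_{i,k}(-s)$, $B=E_{k,j}(s)$ and $C=E_{x,y}$, and then insert the structure constants of $\widehat{\mathfrak{gl}}(m|n)$, i.e.\ $[E_{k,j}(s),E_{x,y}]=\delta_{j,x}E_{k,y}(s)-(-1)^{p(E_{k,j})p(E_{x,y})}\delta_{y,k}E_{x,j}(s)$ together with the analogous expression for $[E_{i,k}(-s),E_{x,y}]$. The $c$-central terms drop out because each carries a factor $s\delta_{s+u,0}=s\delta_{s,0}=0$ (the second leg sits in degree $u=0$), and in the intended applications $E_{x,y}$ is an off-diagonal root vector, so no $z$-terms occur. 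Expanding and using $p(E_{k,j})+p(E_{i,k})=p(E_{i,j})$ produces four families of terms: the ``right hit'' carrying $\delta_{j,x}$, which is exactly the first summand on the right-hand side; the ``left hit'' coming from $[E_{i,k}(-s),E_{x,y}]$, which assembles into the second summand; and two ``intersection'' families arising from the specializations $k=x$ and $k=y$.

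The crux is to merge the last two families into the single boundary term. Both produce the operator $E_{i,y}(-s)E_{x,j}(s)$, the $k=x$ family with coefficient $+\delta(1\le x\le a)(-1)^{p(x)+p(E_{x,j})p(E_{x,y})}$ and the $k=y$ family with coefficient $-\delta(1\le y\le a)(-1)^{p(y)+p(E_{y,j})p(E_{x,y})}$. I would then record the parity identity $(-1)^{p(y)+p(E_{y,j})p(E_{x,y})}=(-1)^{p(x)+p(E_{x,j})p(E_{x,y})}$, valid because the two exponents differ by $(p(x)+p(y))\bigl(1+p(x)+p(y)\bigr)\equiv 0\pmod 2$. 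With this common sign pulled out, the two indicators combine as $\delta(1\le x\le a)-\delta(1\le y\le a)=\delta(x\le a<y)-\delta(x>a\ge y)$, which is precisely the step-function coefficient in the statement.

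The main obstacle is bookkeeping rather than conceptual: one must track every Koszul sign through the Leibniz expansion and through the reindexing $k\mapsto x,y$, verify the parity identity above, and confirm that the central elements genuinely vanish in the completed algebra. Once these are in hand, collecting the four families yields \eqref{equation154}, and the identical argument over the range $a\le k\le m+n$ gives \eqref{equation155}.
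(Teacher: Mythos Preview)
Your proposal is correct and follows essentially the same route as the paper: expand each summand via the super-Leibniz rule, collect the four resulting families (the $\delta_{j,x}$-term, the $\delta_{y,i}$-term, and the two specializations $k=x$ and $k=y$), invoke the parity identity $p(y)+p(E_{y,j})p(E_{x,y})\equiv p(x)+p(E_{x,j})p(E_{x,y})\pmod 2$, and then combine $\delta(x\le a)-\delta(y\le a)$ into the step function $\delta(x\le a<y)-\delta(x>a\ge y)$. The paper's proof is terser but identical in structure; your added remarks on continuity of $\ad(E_{x,y})$ and the vanishing of the central contributions are welcome justifications that the paper leaves implicit.
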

\begin{proof}
We prove only \eqref{equation154} since \eqref{equation155} is proven in a similar way. By direct computation, the first term of \eqref{equation154} is equal to
\begin{align}
&\delta_{j,x}\displaystyle\sum_{s \geq p}  \limits\displaystyle\sum_{k=1}^{a}\limits{(-1)}^{p(k)} E_{i,k}(-s) E_{k,y}(s)\nonumber\\
&\quad-\delta(y\leq a)\displaystyle\sum_{s \geq p}  \limits{(-1)}^{p(y)+p(E_{y,j})p(E_{x,y})} E_{i,y}(-s) E_{x,j}(s)\nonumber\\
&\quad+\delta(x\leq a)\displaystyle\sum_{s \geq p}  \limits{(-1)}^{p(x)+p(E_{x,j})p(E_{x,y})} E_{i,y}(-s) E_{x,j}(s)\nonumber\\
&\quad-{(-1)}^{p(E_{i,j})p(E_{x,y})}\displaystyle\sum_{s \geq p}  \limits\displaystyle\sum_{k=1}^{a}\limits{(-1)}^{p(k)} E_{x,k}(-s) E_{k,j}(s).\label{equation156}
\end{align}
Since $p(y)+p(E_{y,j})p(E_{x,y})=p(x)+p(E_{x,j})p(E_{x,y})$, the sum of the second and third terms of \eqref{equation156} is equal to
\begin{align*}
&\{\delta(x\leq a<y)-\delta(x>a\geq y)\}\displaystyle\sum_{s \geq p}  \limits{(-1)}^{p(x)+p(E_{x,j})p(E_{x,y})} E_{i,y}(-s) E_{x,j}(s).
\end{align*}
Then, we obtain \eqref{equation155}.
\end{proof}
Suppose that $i, j\neq0$. Other cases are proven in a similar way. By the definition of $\ev(x^+_{i,1})$, we obtain
\begin{align}
&\phantom{{}={}}[\ev(x^+_{i,1}),\ev(x^-_{j,0})]\nonumber\\
&=[(\alpha - (i-2\delta(i\geq m+1)(i-m))\varepsilon_1) x_{i}^{+}, {(-1)}^{p(j)}E_{j+1,j}]\nonumber\\
&\quad+ [\hbar \sum_{s \geq 0}\sum_{k=1}^i {(-1)}^{p(k)}E_{i,k}(-s) E_{k,i+1}(s), {(-1)}^{p(j)}E_{j+1,j}]\nonumber\\
&\quad+ [\hbar \sum_{s \geq 0}\sum_{k=i+1}^{m+n} {(-1)}^{p(k)}E_{i,k}(-s-1) E_{k,i+1}(s+1), {(-1)}^{p(j)}E_{j+1,j}].\label{r1}
\end{align}
By \eqref{equation154}, $[\hbar \displaystyle\sum_{s \geq 0}\limits\displaystyle\sum_{k=1}^i\limits {(-1)}^{p(k)}E_{i,k}(-s) E_{k,i+1}(s), {(-1)}^{p(j)}E_{j+1,j}]$, the second term of the right hand side of \eqref{r1}, is equal to
\begin{align}
&\phantom{{}={}}[\hbar \sum_{s \geq 0}\sum_{k=1}^i{(-1)}^{p(k)} E_{i,k}(-s) E_{k,i+1}(s), {(-1)}^{p(j)}E_{j+1,j}]\nonumber\\
&=\delta_{i,j}\hbar\sum_{s \geq 0}\sum_{k=1}^i{(-1)}^{p(i)+p(k)}E_{i,k}(-s) E_{k,i}(s)\nonumber\\
&\quad-\delta_{i,j}\hbar\sum_{s \geq 0}\sum_{k=1}^i{(-1)}^{p(k)+p(i)+p(E_{i,i+1})}E_{i+1,k}(-s) E_{k,i+1}(s)\nonumber\\
&\quad-\delta_{i,j}\hbar\sum_{s \geq 0}{(-1)}^{p(E_{i,i+1})p(E_{i,i+1})}E_{i,i}(-s) E_{i+1,i+1}(s).\label{813}
\end{align}
Similarly, by \eqref{equation155}, $[\hbar \displaystyle\sum_{s \geq 0}\limits\displaystyle\sum_{k=i+1}^{m+n}\limits{(-1)}^{p(k)} E_{i,k}(-s-1) E_{k,i+1}(s+1), {(-1)}^{p(j)}E_{j+1,j}]$, the third term of the right hand side of \eqref{r1}, is equal to 
\begin{align}
&\phantom{{}={}}[\hbar \sum_{s \geq 0}\sum_{k=i+1}^{m+n} {(-1)}^{p(k)}E_{i,k}(-s-1) E_{k,i+1}(s+1), {(-1)}^{p(j)}E_{j+1,j}]\nonumber\\
&=\hbar\sum_{s \geq 0}\sum_{k=i+1}^{m+n} \delta_{i,j}{(-1)}^{p(k)+p(i)}E_{i,k}(-s-1) E_{k,i}(s+1)\nonumber\\
&\quad-\hbar\sum_{s \geq 0}\sum_{k=i+1}^{m+n}\delta_{i,j}{(-1)}^{p(k)+p(i)+p(E_{i,i+1})}E_{i+1,k}(-s-1) E_{k,i+1}(s+1)\nonumber\\
&\quad+\hbar\sum_{s \geq 0}\delta_{i,j}{(-1)}^{p(i+1)+p(i)}E_{i,i}(-s-1) E_{i+1,i+1}(s+1).\label{814}
\end{align}
We can rewrite the sum of the last term of \eqref{813} and the last term of \eqref{814}. Since $p(E_{i,i+1})=p(i)+p(i+1)$ holds, we obtain
\begin{align}
&\phantom{{}={}}-\hbar\sum_{s \geq 0}{(-1)}^{p(E_{i,i+1})p(E_{i,i+1})}E_{i,i}(-s) E_{i+1,i+1}(s)\nonumber\\
&\phantom{{}={}}+\hbar\sum_{s \geq 0}{(-1)}^{p(i+1)+p(i)+p(E_{i+1,i+1})p(E_{i,i+1})}E_{i,i}(-s-1) E_{i+1,i+1}(s+1)\nonumber\\
&=-\hbar\sum_{s \geq 0}{(-1)}^{p(E_{i,i+1})}E_{i,i}(-s) E_{i+1,i+1}(s)+\hbar\sum_{s \geq 0}{(-1)}^{p(E_{i,i+1})}E_{i,i}(-s-1) E_{i+1,i+1}(s+1)\nonumber\\
&=-\hbar{(-1)}^{p(E_{i,i+1})}E_{i,i} E_{i+1,i+1}.\label{QUO}
\end{align}
Thus, we have shown that $[\ev(x^+_{i,1}),\ev(x^-_{j,0})]=\delta_{i,j}\ev(h_{i,1})$ holds by \eqref{813}, \eqref{814} and \eqref{QUO}. 

\subsection{The proof of \eqref{rel5}}
We only show the case where $i,j\neq0$ and when $i=0$ and $j\neq0$. The other case is proven in a similar way.

{\bf Case 1,}\ $i,j\neq0$.

First, we show the case where $i, j\neq0$. By the definition of $\ev(h_{i,1})$, we obtain
\begin{align}
&[\ev(\widetilde{h}_{i,1}),\ev(x^+_{j,0})]\nonumber\\
&=[(\alpha - (i-2\delta(i\geq m+1)(i-m))\varepsilon_1) h_{i} - \dfrac{1}{2}\hbar((E_{i,i})^2+(E_{i+1,i+1})^2), E_{j,j+1}]\nonumber\\
&\quad + [\hbar{(-1)}^{p(i)} \sum_{s \geq 0} \sum_{k=1}^{i} {(-1)}^{p(k)}E_{i,k}(-s) E_{k,i}(s), E_{j,j+1}]\nonumber\\
&\quad +[\hbar{(-1)}^{p(i)}\sum_{s \geq 0}\sum_{k=i+1}^{m+n} {(-1)}^{p(k)}E_{i,k}(-s-1) E_{k,i}(s+1), E_{j,j+1}]\nonumber\\
&\quad -[ \hbar{(-1)}^{p(i+1)}\sum_{s \geq 0}\sum_{k=1}^{i}{(-1)}^{p(k)} E_{i+1,k}(-s) E_{k,i+1}(s), E_{j,j+1}]\nonumber\\
&\quad -[\hbar{(-1)}^{p(i+1)}\sum_{s \geq 0}\sum_{k=i+1}^{m+n} {(-1)}^{p(k)}E_{i+1,k}(-s-1) E_{k,i+1}(s+1), E_{j,j+1}].\label{r2}
\end{align}
Let us compute these terms respectively. By direct computation, the first term of the right hand side of \eqref{r2} is equal to
\begin{align}
&\phantom{{}={}}[(\alpha - (i-2\delta(i\geq m+1)(i-m))\varepsilon_1) h_{i} - \dfrac{1}{2}\hbar((E_{i,i})^2+(E_{i+1,i+1})^2), E_{j,j+1}]\nonumber\\
&=(\alpha - (i-2\delta(i\geq m+1)(i-m))\varepsilon_1)a_{i,j}x^+_j\nonumber\\
&\quad-\dfrac{\hbar}{2} (\delta_{i,j}(\{E_{i,i+1}, E_{i,i}\}-\{E_{i,i+1}, E_{i+1,i+1}\})-\delta_{i,j+1}\{E_{i-1,i}, E_{i,i}\}+\delta_{i+1,j}\{E_{i+1,i+2}, E_{i+1,i+1}\}).\label{411}
\end{align}
By \eqref{equation154} and \eqref{equation155}, we also find that the sum of the second and third terms of the right hand side of \eqref{r2} is equal to
\begin{align}
&\phantom{{}={}}[\hbar{(-1)}^{p(i)} \sum_{s \geq 0} \sum_{k=1}^{i} {(-1)}^{p(k)}E_{i,k}(-s) E_{k,i}(s), E_{j,j+1}]\nonumber\\
&\qquad\qquad\qquad\qquad\qquad+[\hbar{(-1)}^{p(i)}\sum_{k=i+1}^{m+n} {(-1)}^{p(k)}E_{i,k}(-s-1) E_{k,i}(s+1), E_{j,j+1}]\nonumber\\
&=\hbar{(-1)}^{p(i)} \sum_{s \geq 0}\sum_{k=1}^{i}\delta_{i,j}{(-1)}^{p(k)}E_{i,k}(-s) E_{k,i+1}(s)\nonumber\\
&\quad-\hbar{(-1)}^{p(i)} \sum_{s \geq 0}\sum_{k=1}^{i}\delta_{i,j+1}{(-1)}^{p(k)}E_{j,k}(-s) E_{k,i}(s)\nonumber\\
&\quad+\hbar{(-1)}^{p(i)} \sum_{s \geq 0}\delta_{i,j}{(-1)}^{p(i)}E_{i,i+1}(-s) E_{i,i}(s)\nonumber\\
&\quad+\hbar{(-1)}^{p(i)} \sum_{s \geq 0}\sum_{k=i+1}^{m+n}\delta_{i,j}{(-1)}^{p(k)}E_{i,k}(-s-1) E_{k,j+1}(s+1)\nonumber\\
&\quad- \hbar{(-1)}^{p(i)} \sum_{s \geq 0}\sum_{k=i+1}^{m+n}\delta_{i,j+1}{(-1)}^{p(k)}E_{j,k}(-s-1) E_{k,i}(s+1)\nonumber\\
&\quad-\hbar{(-1)}^{p(i)} \sum_{s \geq 0}\delta_{i,j}{(-1)}^{p(i+1)+p(E_{i,i+1})p(E_{i+1,i})}E_{i,i+1}(-s-1) E_{i,i}(s+1).\label{413}
\end{align}
By a direct computation, we obtain
\begin{align}
\text{the sum of the third and 6-th terms of \eqref{413}}=\hbar\delta_{i,j}E_{i,i+1}E_{i,i}.\label{r5.5}
\end{align}
Next, let us rewrite the sum of the first and 4-th terms of \eqref{413}. By the definition of $\ev(x_{i,1}^{+})$, we obtain
\begin{align}
&\phantom{{}={}}\text{the first term of \eqref{413}}+\text{the 4-th term of \eqref{413}}\nonumber\\
&=\delta_{i,j}(\ev(x_{i,1}^{+})-(\alpha - (i-2\delta(i\geq m+1)(i-m)) \varepsilon_1) x_{i}^{+}).\label{r8.5}
\end{align}
By the definition of $\ev(x_{i,1}^{+})$, we also obtain
\begin{align}
&\phantom{{}={}}\text{the second term of \eqref{413}}+\text{the 5-th term of \eqref{413}}\nonumber\\
&=-\delta_{i,j+1}\hbar{(-1)}^{p(i)} \sum_{s \geq 0}\sum_{k=1}^{j}{(-1)}^{p(k)}E_{j,k}(-s) E_{k,i}(s)\nonumber\\
&\quad-\delta_{i,j+1}\hbar{(-1)}^{p(i)} \sum_{s \geq 0}\sum_{k=j+1}^{m+n}{(-1)}^{p(k)}E_{j,k}(-s-1) E_{k,i}(s+1)\nonumber\\
&\quad-\hbar\delta_{i,j+1}E_{j,i} E_{i,i}\nonumber\\
&=-\delta_{i,j+1}{(-1)}^{p(i)}(\ev(x_{j,1}^{+})-(\alpha - (j-2\delta(i\geq m+1)(j-m)) \varepsilon_1) x_{j}^{+})-\hbar\delta_{i,j+1}E_{j,i} E_{i,i}.\label{r9}
\end{align}
Therefore, by \eqref{r5.5}, \eqref{r8.5} and \eqref{r9}, the sum of first, second and third terms of the right hand side of \eqref{r2} is equal to
\begin{align}
&(\alpha - (i-2\delta(i\geq m+1)(i-m))\varepsilon_1)a_{i,j}x^+_j\nonumber\\
&\quad-\dfrac{\hbar}{2} (\delta_{i,j}(\{E_{i,i+1}, E_{i,i}\}-\{E_{i,i+1}, E_{i+1,i+1}\})-\delta_{i,j+1}\{E_{i-1,i}, E_{i,i}\}+\delta_{i+1,j}\{E_{i+1,i+2}, E_{i+1,i+1}\})\nonumber\\
&\quad+\hbar\delta_{i,j}E_{i,i+1}E_{i,i}+{(-1)}^{p(i)}\delta_{i,j}(\ev(x_{i,1}^{+})-(\alpha - (i-2\delta(i\geq m+1)(i-m)) \varepsilon_1) x_{i}^{+})\nonumber\\
&\quad-{(-1)}^{p(i)}\delta_{i,j+1}(\ev(x_{j,1}^{+})-(\alpha - (j-2\delta(i\geq m+1)(j-m)) \varepsilon_1) x_{j}^{+}).\label{super11}
\end{align}

Similarly to \eqref{super11}, we find that the sum of the 4-th and 5-th terms of the right hand side of \eqref{r2} is equal to
\begin{align}
&-\hbar\delta_{j,i}E_{i+1,i+1}E_{i,i+1}-{(-1)}^{p(i+1)}\delta_{i+1,j}(\ev(x_{j,1}^{+})-(\alpha - (i-2\delta(j\geq m+1)(j-m)) \varepsilon_1) x_{j}^{+})\nonumber\\
&\quad+\delta_{i+1,j}\hbar E_{i+1,i+1} E_{i+1,j+1}+\delta_{i,j}{(-1)}^{p(i+1)}(\ev(x^+_{i,1})-(\alpha - (i-2\delta(i\geq m+1)(i-m)) \varepsilon_1) x_{i}^{+}).\label{super12}
\end{align}
Then, $[\ev(\widetilde{h}_{i,1}),\ev(x^+_{j,0})]$ is equal to the sum of \eqref{411}, \eqref{super11} and \eqref{super12}.
\begin{align*}
&(\alpha - (i-2\delta(i\geq m+1)(i-m))\varepsilon_1)a_{i,j}x^+_j\nonumber\\
&\quad-\dfrac{\hbar}{2} (\delta_{i,j}(\{E_{i,i+1}, E_{i,i}\}-\{E_{i,i+1}, E_{i+1,i+1}\})-\delta_{i,j+1}\{E_{i-1,i}, E_{i,i}\}+\delta_{i+1,j}\{E_{i+1,i+2}, E_{i+1,i+1}\})\\
&\quad+(\alpha - (i-2\delta(i\geq m+1)(i-m))\varepsilon_1)a_{i,j}x^+_j\nonumber\\
&\quad-\dfrac{\hbar}{2} (\delta_{i,j}(\{E_{i,i+1}, E_{i,i}\}-\{E_{i,i+1}, E_{i+1,i+1}\})-\delta_{i,j+1}\{E_{i-1,i}, E_{i,i}\}+\delta_{i+1,j}\{E_{i+1,i+2}, E_{i+1,i+1}\})\nonumber\\
&\quad+\hbar\delta_{i,j}E_{i,i+1}E_{i,i}+{(-1)}^{p(i)}\delta_{i,j}(\ev(x_{i,1}^{+})-(\alpha - (i-2\delta(i\geq m+1)(i-m)) \varepsilon_1) x_{i}^{+})\nonumber\\
&\quad-{(-1)}^{p(i)}\delta_{i,j+1}(\ev(x_{j,1}^{+})-(\alpha - (j-2\delta(i\geq m+1)(j-m)) \varepsilon_1) x_{j}^{+})\\
&\quad-\hbar\delta_{j,i}E_{i+1,i+1}E_{i,i+1}-{(-1)}^{p(i+1)}\delta_{i+1,j}(\ev(x_{j,1}^{+})-(\alpha - (i-2\delta(j\geq m+1)(j-m)) \varepsilon_1) x_{j}^{+})\nonumber\\
&\quad+\delta_{i+1,j}\hbar E_{i+1,i+1} E_{i+1,j+1}+\delta_{i,j}{(-1)}^{p(i+1)}(\ev(x^+_{i,1})-(\alpha - (i-2\delta(i\geq m+1)(i-m)) \varepsilon_1) x_{i}^{+}).
\end{align*}
By \eqref{411}, \eqref{super11} and \eqref{super12}, when $i\neq j, j\pm1$, $[\ev(\widetilde{h}_{i,1}),\ev(x^+_{j,0})]$ is zero. Provided that $i=j$, $[\ev(\widetilde{h}_{i,1}),\ev(x^+_{i,0})]$ is equal to
\begin{align}
&(\alpha - (i-2\delta(i\geq m+1)(i-m))\varepsilon_1)a_{i,i}x^+_i-\dfrac{\hbar}{2}(\{E_{i,i+1}, E_{i,i}\}-\{E_{i,i+1}, E_{i+1,i+1}\})\nonumber\\
&\quad+\hbar E_{i,i+1}E_{i,i}+{(-1)}^{p(i)}(\ev(x_{i,1}^{+})-(\alpha - (i-2\delta(i\geq m+1)(i-m)) \varepsilon_1) x_{i}^{+})\nonumber\\
&\quad-\hbar E_{i+1,i+1}E_{i,i+1}+{(-1)}^{p(i+1)}(\ev(x^+_{i,1})-(\alpha - (i-2\delta(i\geq m+1)(i-m)) \varepsilon_1) x_{i}^{+}).\label{super210}
\end{align}
Since $a_{i,i}={(-1)}^{p(i)}+{(-1)}^{p(i+1)}$ holds, we have
\begin{align*}
&(\alpha - (i-2\delta(i\geq m+1)(i-m))\varepsilon_1)a_{i,i}x^+_i-{(-1)}^{p(i)}(\alpha - (i-2\delta(i\geq m+1)(i-m)) \varepsilon_1) x_{i}^{+}\\
&\qquad\qquad-{(-1)}^{p(i+1)}(\alpha - (i-2\delta(i\geq m+1)(i-m)) \varepsilon_1) x_{i}^{+}=0
\end{align*}
and
\begin{align*}
&\phantom{{}={}}-\dfrac{\hbar}{2} (\{E_{i,i+1}, E_{i,i}\}-\{E_{i,i+1}, E_{i+1,i+1}\})+\hbar E_{i,i+1} E_{i,i}-\hbar E_{i+1,i+1}E_{i,i+1}\\
&=-\dfrac{\hbar}{2}(E_{i,i}E_{i,i+1}-E_{i,i+1} E_{i,i}+E_{i+1,i+1}E_{i,i+1}-E_{i,i+1}E_{i+1,i+1})\\
&=-\dfrac{\hbar}{2}(E_{i,i+1}-E_{i,i+1})=0.
\end{align*}
Then, we find that $[\ev(\widetilde{h}_{i,1}),\ev(x^+_{i,0})]$ is equal to $a_{i,i}\ev(x^+_{i,1})$.

When $i=j+1$, $[\ev(\widetilde{h}_{i,1}),\ev(x^+_{j,0})]$ is equal to
\begin{align}
&(\alpha - (i-2\delta(i\geq m+1)(i-m))\varepsilon_1)a_{i,j}x^+_j+\dfrac{\hbar}{2}\{E_{i-1,i}, E_{i,i}\}\nonumber\\
&-{(-1)}^{p(i)}(\ev(x_{j,1}^{+})-(\alpha - (j-2\delta(i\geq m+1)(j-m)) \varepsilon_1) x_{j}^{+})-\hbar E_{j,i}E_{i,i}.
\end{align}
Since $a_{i,j}=-{(-1)}^{p(i)}$ holds, we have
\begin{align*}
\dfrac{\hbar}{2}\{E_{i-1,i}, E_{i,i}\}-\hbar E_{j,i}E_{i,i}  &=\dfrac{\hbar}{2}[E_{i,i},E_{i-1,i}]=-\dfrac{\hbar}{2}E_{i-1,i}
\end{align*}
and
\begin{align*}
&\quad(\alpha - (i-2\delta(i\geq m+1)(i-m))\varepsilon_1)a_{i,j}x^+_j+{(-1)}^{p(i)}(\alpha-(j-2\delta(i\geq m+1)(j-m)\ve_1)x^+_j\\
&=\ve_1x^+_j.
\end{align*}
Then, we find that $[\ev(\widetilde{h}_{i,1}),\ev(x^+_{j,0})]$ is equal to $a_{i,i-1}(\ev(x^+_{i-1})+a_{i,i-1}\dfrac{\ve_1-\ve_2}{2}E_{i-1,i})$. 

When $i=j-1$,  $[\ev(\widetilde{h}_{i,1}),\ev(x^+_{j,0})]$ is equal to
\begin{align}
&(\alpha - (i-2\delta(i\geq m+1)(i-m))\varepsilon_1)a_{i,j}x^+_j-\dfrac{\hbar}{2} (\{E_{i+1,i+2}, E_{i+1,i+1}\})\nonumber\\
&-{(-1)}^{p(i+1)}(\ev(x_{j,1}^{+})-(\alpha - (i-2\delta(j\geq m+1)(j-m)) \varepsilon_1) x_{j}^{+}+\hbar E_{i+1,i+1} E_{i+1,j+1}.
\end{align}
Since $a_{i,j}=-{(-1)}^{p(j)}$ holds, we have
\begin{align*}
-\dfrac{\hbar}{2}\{E_{i+1,i+2}, E_{i+1,i+1}\}+\hbar E_{i+1,i+1} E_{i+1,j+1}=\dfrac{\hbar}{2}[E_{i+1,i+1}, E_{i+1,i+2}]=\dfrac{\hbar}{2}E_{i+1,i+2}
\end{align*}
and
\begin{align*}
&\quad(\alpha - (i-2\delta(i\geq m+1)(i-m))\varepsilon_1)a_{i,j}x^+_j+{(-1)}^{p(i+1)}(\alpha - (i-2\delta(j\geq m+1)(j-m)) \varepsilon_1) x_{j}^{+}\\
&=-\ve_1E_{i+1,i+2}
\end{align*}
Then, $[\ev(\widetilde{h}_{i,1}),\ev(x^+_{i+1,0})]$ is equal to $a_{i,i+1}(\ev(x^+_{i+1,1})-a_{i,i+1}\dfrac{\ve_1-\ve_2}{2}E_{i+1,i+2})$.

{\bf Case 2,}\ $i=0$ and $j\neq0$.

By the definition of $\ev$, we obtain
\begin{align}
&\phantom{{}={}}[\ev(\widetilde{h}_{0,1}),\ev(x^+_{j,0})]\nonumber\\
&=[(\alpha - (m-n) \varepsilon_1) h_{0}- \dfrac{1}{2}\hbar((E_{m+n,m+n})^2+(E_{1,1}-c)^2), E_{j,j+1}]\nonumber\\
&\quad- [\hbar \sum_{s \geq 0} \sum_{k=1}^{m+n}{(-1)}^{p(k)}E_{m+n,k}(-s) E_{k,m+n}(s), E_{j,j+1}]\nonumber\\
&\quad - [\hbar \sum_{s \geq 0} \sum_{k=1}^{m+n}{(-1)}^{p(k)}E_{1,k}(-s-1) E_{k,1}(s+1), E_{j,j+1}].\label{r3}
\end{align}
By direct computation, the first term of \eqref{r3} is equal to
\begin{align}
&(\alpha - (m-n) \varepsilon_1)a_{0,j}x^+_j\nonumber\\
&\quad- \dfrac{\hbar}{2} \Big( -\delta_{m+n-1,j}(\{E_{m+n,m+n}, E_{m+n-1,m+n}\}+\delta_{1,j}\{E_{1,2}, (E_{1,1}-c)\})\Big)\label{311}
\end{align}
We also find that the second term of \eqref{r3} is equal to
\begin{align}
&\hbar \sum_{s \geq 0}{(-1)}^{p(j+1)+p(E_{j,j+1})p(E_{j+1,m+n})}E_{m+n,j+1}(-s) E_{j,m+n}(s)\nonumber\\
&\quad- \hbar \sum_{s \geq 0}{(-1)}^{p(j)+p(E_{j,j+1})p(E_{j,m+n})}E_{m+n,j+1}(-s) E_{j,m+n}(s)\nonumber\\
&\quad+ \hbar \sum_{s \geq 0} \sum_{k=1}^{m+n}\delta_{m+n,j+1}{(-1)}^{p(k)}E_{m+n-1,k}(-s) E_{k,m+n}(s).\label{312}
\end{align}
By direct computation, we also know that the third term of \eqref{r3} is equal to
\begin{align}
&- \hbar \sum_{s \geq 0} \sum_{k=1}^{m+n}\delta_{1,j}{(-1)}^{p(k)}E_{1,k}(-s-1) E_{k,2}(s+1)\nonumber\\
&\quad+ \hbar \sum_{s \geq 0}{(-1)}^{p(j+1)+p(E_{j,j+1})p(E_{j+1,1})}E_{1,j+1}(-s-1) E_{j,1}(s+1)\nonumber\\
&\quad- \hbar \sum_{s \geq 0}{(-1)}^{p(j)+p(E_{j,j+1})p(E_{j,1})}E_{1,j+1}(-s-1) E_{j,1}(s+1).\label{313}
\end{align}
First, we show that the sum of the first and second terms of \eqref{312} is equal to zero. By direct computation, we have
\begin{align}
&\phantom{{}={}}\text{the first term of \eqref{312}}+\text{the second term of \eqref{312}}\nonumber\\
&=\hbar \sum_{s \geq 0} {(-1)}^{p(j+1)+p(E_{j,j+1})p(E_{j+1,m+n})}E_{m+n,j+1}(-s) E_{j,m+n}(s)\nonumber\\
&\quad- \hbar \sum_{s \geq 0}{(-1)}^{p(j)+p(E_{j,j+1})p(E_{j,m+n})}E_{m+n,j+1}(-s) E_{j,m+n}(s)\nonumber\\
&=0.\label{super60}
\end{align}
Similarly, by direct computation, we also obtain
\begin{align}
&\phantom{{}={}}\text{the second term of \eqref{313}}+\text{the third term of \eqref{313}}\nonumber\\
&=\hbar \sum_{s \geq 0}{(-1)}^{p(j+1)+p(E_{j,j+1})p(E_{j+1,1})}E_{1,j+1}(-s-1) E_{j,1}(s+1)\nonumber\\
&\quad- \hbar \sum_{s \geq 0}{(-1)}^{p(j)+p(E_{j,j+1})p(E_{j,1})}E_{1,j+1}(-s-1) E_{j,1}(s+1)\nonumber\\
&=0.\label{super61}
\end{align}
Next, we rewrite the third term of \eqref{312}. By direct computation, we have
\begin{align}
&\phantom{{}={}}\text{the third term of \eqref{312}}\nonumber\\
&=\hbar \sum_{s \geq 0} \sum_{k=1}^{m+n-1}\delta_{m+n,j+1}{(-1)}^{p(k)}E_{m+n-1,k}(-s) E_{k,m+n}(s)\nonumber\\
&\quad+\hbar \sum_{s \geq 0} \delta_{m+n,j+1}{(-1)}^{p(m+n)}E_{m+n-1,m+n}(-s-1) E_{m+n,m+n}(s+1)\nonumber\\
&\quad+\hbar\delta_{m+n,j+1}{(-1)}^{p(m+n)}E_{m+n-1,m+n} E_{m+n,m+n}\nonumber\\
&=\delta_{m+n,j+1}(\ev(x_{m+n-1,1}^{+})-(\alpha - (m-n+1) \varepsilon_1)x^+_{m+n-1})\nonumber\\
&\quad+\hbar\delta_{m+n,j+1}{(-1)}^{p(m+n)}E_{m+n-1,m+n} E_{m+n,m+n}.\label{super62}
\end{align}
Similarly, we rewrite the first term of \eqref{313} as follows;
\begin{align}
&\phantom{{}={}}\text{the first term of \eqref{313}}\nonumber\\
&=- \hbar \sum_{s \geq 0} \delta_{1,j}E_{1,1}(-s) E_{1,2}(s)- \hbar \sum_{s \geq 0} \sum_{k=2}^{m+n}\delta_{1,j}{(-1)}^{p(k)}E_{1,k}(-s-1) E_{k,2}(s+1)\nonumber\\
&\quad+ \hbar \sum_{s \geq 0}\delta_{1,j}E_{1,1} E_{1,2}\nonumber\\
&=-\delta_{j,1}(\ev(x_{1,1}^{+})-(\alpha - \varepsilon_1)x^+_{1})+\hbar\delta_{j,1}E_{1,1} E_{1,2}.\label{super63}
\end{align}
Then, by \eqref{r3}, \eqref{super60}, \eqref{super61}, \eqref{super62}, and \eqref{super63}, we can rewrite $[\ev(\widetilde{h}_{0,1}),x^+_{j,0}]$ as
\begin{align}
&(\alpha - (m-n) \varepsilon_1)a_{0,j}x^+_j- \dfrac{\hbar}{2}( -\delta_{m+n,j+1}\{E_{m+n,m+n}, E_{j,m+n}\}+\delta_{1,j}\{E_{1,j+1}, (E_{1,1}-c)\})\nonumber\\
&\quad+\delta_{m+n,j+1}(\ev(x_{m+n-1,1}^{+})-(\alpha - (m-n-1) \varepsilon_1)x^+_{m+n-1})\nonumber\\
&\quad+\hbar\delta_{m+n,j+1}{(-1)}^{p(m+n)}E_{m+n-1,m+n} E_{m+n,m+n}\nonumber\\
&\quad-\delta_{j,1}(\ev(x_{1,1}^{+}-(\alpha - \varepsilon_1)x^+_{1})+\hbar\delta_{j,1}E_{1,1} E_{1,2}.\label{super20}
\end{align}
By \eqref{super20}, when $j\neq 0, 1,m+n-1$, $[\ev(\widetilde{h}_{0,1}),\ev(x^+_{j,0})]$ is equal to zero. When $j=m+n-1$, $[\ev(\widetilde{h}_{0,1}),\ev(x^+_{j,0})]$ is equal to
\begin{align*}
&(\alpha - (m-n) \varepsilon_1)x^+_{m+n-1}+\dfrac{\hbar}{2}\{E_{m+n,m+n}, E_{j,m+n}\}\nonumber\\
&\quad+\ev(x_{m+n-1,1}^{+})-(\alpha - (m-n+1) \varepsilon_1)x^+_{m+n-1}+\hbar{(-1)}^{p(m+n)}E_{m+n-1,m+n} E_{m+n,m+n}.
\end{align*}
Since
\begin{align*}
&\quad\dfrac{\hbar}{2}\{E_{m+n,m+n}, E_{m+n-1,m+n}\}+\hbar{(-1)}^{p(m+n)}E_{m+n-1,m+n} E_{m+n,m+n}\\
&=\dfrac{\hbar}{2}[E_{m+n,m+n}, E_{m+n-1,m+n}]-\dfrac{\hbar}{2}E_{m+n-1,m+n}.
\end{align*}
holds, $[\ev(\widetilde{h}_{0,1}),\ev(x^+_{m+n-1,0})]$ is equal to 
\begin{equation*}
a_{m+n-1,0}(\ev(x^+_{m+n-1,1})+a_{m+n-1,0}\dfrac{\ve_1-\ve_2}{2}E_{m+n-1,m+n}).
\end{equation*}

By \eqref{super20}, when $j=1$, $[\ev(\widetilde{h}_{0,1}),\ev(x^+_{1,0})]$ can be written as
\begin{align*}
&-(\alpha - (m-n) \varepsilon_1)x^+_1- \dfrac{\hbar}{2}\{E_{1,j+1}, (E_{1,1}-c)\}-\ev(x_{1,1}^{+})-(\alpha - \varepsilon_1)x^+_{1}+\hbar E_{1,1} E_{1,2}.
\end{align*}
Since
\begin{align*}
\hbar E_{1,1} E_{1,2}- \dfrac{\hbar}{2}\{E_{1,2}, (E_{1,1}-c)\}&=\dfrac{\hbar}{2}[E_{1,1},E_{1,2}]+\hbar cE_{1,2}=(\dfrac{\hbar}{2}+\hbar c)E_{1,2}
\end{align*}
holds, $[\ev(\widetilde{h}_{0,1}),x^+_{1,0}]=a_{0,1}(\ev(x^+_{1,1})-a_{0,1}\dfrac{\ve_1-\ve_2}{2}\ev(x^+_{1,0}))$ is equivalent to the relation $c\hbar=(m-n)\ve_1$. It is nothing but assumption. This completes the proof of the case $j\neq0$ and $i=0$.

Other cases are proven in the same way. Thus, we show that $[\ev(\widetilde{h}_{i,1}),\ev(x^+_{j,0})]=a_{i,j}(\ev(x^+_{j,1})-b_{i,j}\dfrac{\ve_1-\ve_2}{2}\ev(x^+_{j,0}))$ holds.

\subsection{The proof of \eqref{rel6}}
We only show the case where $i,j\neq0$ and $i=0,j\neq0$. The other case is proven in a similar way.

{\bf Case 1,} $i, j\neq0$

Suppose that $i, j\neq0$. First, we let us compute $[\ev(x^+_{i,1}),\ev(x^+_{j,0})]$. By the definition of $\ev(x^+_{i,1})$, we have
\begin{align}
&\phantom{{}={}}[\ev(x^+_{i,1}),\ev(x^+_{j,0})]\nonumber\\
&=[(\alpha - (i-2\delta(i\geq m+1)(i-m)) \varepsilon_1) x_{i}^{+}, E_{j,j+1}]\nonumber\\
&\quad + [\hbar\sum_{s \geq 0}\sum_{k=1}^i {(-1)}^{p(k)}E_{i,k}(-s) E_{k,i+1}(s), E_{j,j+1}]\nonumber\\
&\quad + [\hbar\sum_{s \geq 0}\sum_{k=i+1}^{m+n} {(-1)}^{p(k)}E_{i,k}(-s-1) E_{k,i+1}(s+1), E_{j,j+1}].\label{r11}
\end{align}
By direct computation, the second term of \eqref{r11} is equal to
\begin{align}
&\phantom{{}={}}[\hbar\sum_{s \geq 0}\sum_{k=1}^i {(-1)}^{p(k)}E_{i,k}(-s) E_{k,i+1}(s), E_{j,j+1}]\nonumber\\
&=\hbar\sum_{s \geq 0}\sum_{k=1}^i\delta_{i+1,j}{(-1)}^{p(k)}E_{i,k}(-s) E_{k,j+1}(s)\nonumber\\
&\quad-\hbar\sum_{s \geq 0}\sum_{k=1}^i\delta_{i,j+1}{(-1)}^{p(k)+p(E_{i+1,i})p(E_{j,j+1})}E_{j,k}(-s) E_{k,i+1}(s)\nonumber\\
&\quad+\hbar\sum_{s \geq 0}\delta_{j,i}{(-1)}^{p(i)+p(E_{i,i+1})p(E_{i,i+1})}E_{i,i+1}(-s) E_{i,i+1}(s).\label{211}
\end{align}
We also find that the third term of \eqref{r11} is equal to
\begin{align}
&\phantom{{}={}}[\hbar\sum_{s \geq 0}\sum_{k=i+1}^{m+n} {(-1)}^{p(k)}E_{i,k}(-s-1) E_{k,i+1}(s+1), E_{j,j+1}]\nonumber\\
&=\hbar\sum_{s \geq 0}\sum_{k=i+1}^{m+n}\delta_{i+1,j}{(-1)}^{p(k)}E_{i,k}(-s-1) E_{k,j+1}(s+1)\nonumber\\
&\quad-\hbar\sum_{s \geq 0}\sum_{k=i+1}^{m+n}\delta_{i,j+1}{(-1)}^{p(k)+p(E_{i+1,i})p(E_{j,j+1})}E_{j,k}(-s-1) E_{k,i+1}(s+1)\nonumber\\
&\quad-\hbar\sum_{s \geq 0}\delta_{i,j}{(-1)}^{p(i+1)+p(E_{i+1,i})p(E_{i,i+1})}E_{i,i+1}(-s-1) E_{i,i+1}(s+1).\label{212}
\end{align}
Thus, we can rewrite $[\ev(x^+_{i,1}),\ev(x^+_{j,0})]$ as
\begin{align}
&[(\alpha - (i-2\delta(i\geq m+1)(i-m)) \varepsilon_1) x_{i}^{+}, E_{j,j+1}]\nonumber\\
&+\hbar\sum_{s \geq 0}\sum_{k=1}^i\delta_{i+1,j}{(-1)}^{p(k)}E_{i,k}(-s) E_{k,j+1}(s)\nonumber\\
&-\hbar\sum_{s \geq 0}\sum_{k=1}^i\delta_{i,j+1}{(-1)}^{p(k)+(p(E_{i+1,k})+p(E_{k,i}))p(E_{j,j+1})}E_{j,k}(-s) E_{k,i+1}(s)\nonumber\\
&+\hbar\sum_{s \geq 0}\sum_{k=i+1}^{m+n}\delta_{i+1,j}{(-1)}^{p(k)}E_{i,k}(-s-1) E_{k,j+1}(s+1)\nonumber\\
&-\hbar\sum_{s \geq 0}\sum_{k=i+1}^{m+n}\delta_{i,j+1}{(-1)}^{p(k)+(p(E_{i+1,k})+p(E_{k,i}))p(E_{j,j+1})}E_{j,k}(-s-1) E_{k,i+1}(s+1)\nonumber\\
&+\hbar\sum_{s \geq 0}\delta_{j,i}{(-1)}^{p(i)+p(E_{i,i+1})p(E_{j,i+1})}E_{i,i+1}(-s) E_{i,i+1}(s)\nonumber\\
&-\hbar\sum_{s \geq 0}\delta_{i,j}{(-1)}^{p(i+1)+p(E_{i+1,i})p(E_{i,i+1})}E_{i,i+1}(-s-1) E_{i,i+1}(s+1).\label{super501}
\end{align}
Next, let us compute $[\ev(x^+_{i,0}),\ev(x^+_{j,1})]$. Since it is equal to 
\begin{equation*}
-{(-1)}^{p(E_{i,i+1})p(E_{j,j+1})}[\ev(x^+_{j,1}),\ev(x^+_{i,0})],
\end{equation*}
we can rewrite $[\ev(x^+_{i,0}),\ev(x^+_{j,1})]$ as
\begin{align}
&[E_{i,i+1},(\alpha - (j-2\delta(j\geq m+1)(j-m)) \varepsilon_1) x_{j}^{+}]\nonumber\\
&\quad-\hbar\sum_{s \geq 0}\sum_{k=1}^j\delta_{i,j+1}{(-1)}^{p(k)+p(E_{i,i+1})(p(E_{j,k})+p(E_{k,j+1}))}E_{j,k}(-s) E_{k,i+1}(s)\nonumber\\
&\quad+\hbar\sum_{s \geq 0}\sum_{k=1}^j\delta_{i+1,j}{(-1)}^{p(k)}E_{i,k}(-s) E_{k,j+1}(s)\nonumber\\
&\quad-\hbar\sum_{s \geq 0}\sum_{k=j+1}^{m+n}\delta_{i,j+1}{(-1)}^{p(k)+p(E_{i,i+1})(p(E_{j,k})+p(E_{k,j+1}))}E_{j,k}(-s-1) E_{k,i+1}(s+1)\nonumber\\
&\quad+\hbar\sum_{s \geq 0}\sum_{k=j+1}^{m+n}\delta_{i+1,j}{(-1)}^{p(k)}E_{i,k}(-s-1) E_{k,j+1}(s+1)-\hbar\sum_{s \geq 0}\delta_{i,j}{(-1)}^{p(i)}E_{i,i+1}(-s) E_{i,i+1}(s)\nonumber\\
&\quad+\hbar \sum_{s \geq 0}\delta_{i,j}{(-1)}^{p(i+1)}E_{i,i+1}(-s-1) E_{i,i+1}(s+1).\label{super500}
\end{align}
By \eqref{super501} and \eqref{super500}, when $i\neq j, j\pm1$, $[\ev(x^+_{i,1}),\ev(x^+_{j,0})]-[\ev(x^+_{i,0}),\ev(x^+_{j,1})]$ is equal to zero.

When $i=j$, $[\ev(x^+_{i,1}),\ev(x^+_{j,0})]-[\ev(x^+_{i,0}),\ev(x^+_{j,1})]$ is equal to
\begin{align}
&[(\alpha - (i-2\delta(i\geq m+1)(i-m)) \varepsilon_1) x_{i}^{+}, E_{j,j+1}]\nonumber\\
&\quad-[E_{i,i+1},(\alpha - (j-2\delta(j\geq m+1)(j-m)) \varepsilon_1) x_{j}^{+}]\nonumber\\
&\quad+\hbar\sum_{s \geq 0}{(-1)}^{p(i+1)}E_{i,i+1}(-s) E_{i,i+1}(s)-\hbar\sum_{s \geq 0}{(-1)}^{p(i)}E_{i,i+1}(-s-1) E_{i,i+1}(s+1)\nonumber\\
&\quad+\hbar\sum_{s \geq 0}{(-1)}^{p(i)}E_{i,i+1}(-s) E_{i,i+1}(s)-\hbar\sum_{s \geq 0}{(-1)}^{p(i+1)}E_{i,i+1}(-s-1) E_{i,i+1}(s+1).\label{super1024}
\end{align}
Since $[x^+_i, x^+_i]=0$ holds, the first and second term are zero. We also obtain
\begin{align*}
&\phantom{{}={}}\text{the third term of \eqref{super1024}}+\text{the 4-th term of \eqref{super1024}}=\hbar{(-1)}^{p(i)}E_{i,i+1} E_{i,i+1}
\end{align*}
and
\begin{align*}
&\phantom{{}={}}\text{the 5-th term of \eqref{super1024}}+\text{the 6-th term of \eqref{super1024}}=\hbar{(-1)}^{p(i+1)}E_{i,i+1} E_{i,i+1}
\end{align*}
by direct computation. Thus, $[\ev(x^+_{i,1}),\ev(x^+_{j,0})]-[\ev(x^+_{i,0}),\ev(x^+_{j,1})]$ is equal to $\hbar a_{i,i}E_{i,i+1} E_{i,i+1}$ since $a_{i,i}={(-1)}^{p(i)}+{(-1)}^{p(i+1)}$ holds.

When $i=j-1$, $[\ev(x^+_{i,1}),\ev(x^+_{j,0})]-[\ev(x^+_{i,0}),\ev(x^+_{j,1})]$ is equal to
\begin{align}
&[(\alpha - (i-2\delta(i\geq m+1)(i-m)) \varepsilon_1) x_{i}^{+}, E_{j,j+1}]-[E_{i,i+1},(\alpha - (j-2\delta(j\geq m+1)(j-m)) \varepsilon_1) x_{j}^{+}]\nonumber\\
&\quad+\hbar \sum_{s \geq 0}\sum_{k=1}^i\delta_{i+1,j}{(-1)}^{p(k)}E_{i,k}(-s) E_{k,j+1}(s)\nonumber\\
&\quad+\hbar\sum_{s \geq 0}\sum_{k=i+1}^{m+n}\delta_{i+1,j}{(-1)}^{p(k)}E_{i,k}(-s-1) E_{k,j+1}(s+1)\nonumber\\
&\quad-\hbar\sum_{s \geq 0}\sum_{k=1}^j\delta_{i+1,j}{(-1)}^{p(k)}E_{i,k}(-s) E_{k,j+1}(s)\nonumber\\
&\quad-\hbar\sum_{s \geq 0}\sum_{k=j+1}^{m+n}\delta_{i+1,j}{(-1)}^{p(k)}E_{i,k}(-s-1) E_{k,j+1}(s+1).\label{super1025}
\end{align}
By direct computation, we obtain
\begin{align*}
\text{the third term of \eqref{super1025}}+\text{the 5-th term of \eqref{super1025}}=-\hbar\sum_{s \geq 0}{(-1)}^{p(i+1)}E_{i,i+1}(-s) E_{i+1,i+2}(s)
\end{align*}
and
\begin{align*}
\text{the 4-th term of \eqref{super1024}}+\text{the 6-th term of \eqref{super1024}}=\hbar\sum_{s \geq 0}{(-1)}^{p(i+1)}E_{i,i+1}(-s-1) E_{i+1,i+2}(s+1).
\end{align*}
Then, $[\ev(x^+_{i,1}),\ev(x^+_{j,0})]-[\ev(x^+_{i,0}),\ev(x^+_{j,1})]$ is equal to
\begin{align*}
&[(\alpha - (i-2\delta(i\geq m+1)(i-m)) \varepsilon_1) x_{i}^{+}, E_{j,j+1}]-[E_{i,i+1},(\alpha - (j-2\delta(j\geq m+1)(j-m)) \varepsilon_1) x_{j}^{+}]\nonumber\\
&-\hbar\sum_{s \geq 0}{(-1)}^{p(i+1)}E_{i,i+1}(-s) E_{i+1,i+2}(s)+\hbar\sum_{s \geq 0}{(-1)}^{p(i+1)}E_{i,i+1}(-s-1) E_{i+1,i+2}(s+1).
\end{align*}
Since $a_{i,i+1}=-{(-1)}^{p(i+1)}$ holds, we have
\begin{align*}
&-\hbar\sum_{s \geq 0}{(-1)}^{p(i+1)}E_{i,i+1}(-s) E_{i+1,i+2}(s)+\hbar\sum_{s \geq 0}{(-1)}^{p(i+1)}E_{i,i+1}(-s-1) E_{i+1,i+2}(s+1)\\
&=-{(-1)}^{p(i+1)}\hbar E_{i,i+1} E_{i+1,i+2}=a_{i,i+1}\dfrac{\hbar}{2}\{E_{i,i+1},E_{i+1,i+2}\}+a_{i,i+1}\dfrac{\hbar}{2}[E_{i,i+1},E_{i+1,i+2}]
\end{align*}
and
\begin{align*}
&[(\alpha - (i-2\delta(i\geq m+1)(i-m)) \varepsilon_1) x_{i}^{+}, E_{j,j+1}]-[E_{i,i+1},(\alpha - (j-2\delta(j\geq m+1)(j-m)) \varepsilon_1) x_{j}^{+}]\\
&\qquad\qquad={(-1)}^{p(i+1)}\ve_1[E_{i,i+1},E_{i+1,i+2}]=-a_{i,i+1}\ve_1[E_{i,i+1},E_{i+1,i+2}].
\end{align*}
Then, $[\ev(x^+_{i,1}),\ev(x^+_{j,0})]-[\ev(x^+_{i,0}),\ev(x^+_{j,1})]$ is equal to 
\begin{equation*}
a_{i,i+1}\dfrac{\hbar}{2}\{E_{i,i+1},E_{i+1,i+2}\}-a_{i,i+1}\dfrac{\ve_1-\ve_2}{2}[E_{i,i+1},E_{i+1,i+2}].
\end{equation*}
When $i=j+1$, $[\ev(x^+_{i,1}),\ev(x^+_{j,0})]-[\ev(x^+_{i,0}),\ev(x^+_{j,1})]$ is equal to
\begin{align}
&[(\alpha - (i-2\delta(i\geq m+1)(i-m)) \varepsilon_1) x_{i}^{+}, E_{j,j+1}]-[E_{i,i+1},(\alpha - (j-2\delta(j\geq m+1)(j-m)) \varepsilon_1) x_{j}^{+}]\nonumber\\
&\quad-\hbar\sum_{s \geq 0}\sum_{k=1}^i\delta_{i,j+1}{(-1)}^{p(k)+p(E_{i+1,i})p(E_{j,j+1})}E_{j,k}(-s) E_{k,i+1}(s)\nonumber\\
&\quad-\hbar\sum_{s \geq 0}\sum_{k=i+1}^{m+n}\delta_{i,j+1}{(-1)}^{p(k)+p(E_{i+1,i})p(E_{j,j+1})}E_{j,k}(-s-1) E_{k,i+1}(s+1)\nonumber\\
&\quad+\hbar\sum_{s \geq 0}\sum_{k=1}^j\delta_{i,j+1}{(-1)}^{p(k)+p(E_{i+1,i})p(E_{j,j+1})}E_{j,k}(-s) E_{k,i+1}(s)\nonumber\\
&\quad+\hbar\sum_{s \geq 0}\sum_{k=j+1}^{m+n}\delta_{i,j+1}{(-1)}^{p(k)+p(E_{i+1,i})p(E_{j,j+1})}E_{j,k}(-s-1) E_{k,i+1}(s+1).\label{super1026}
\end{align}
By direct computation, we find that
\begin{align*}
\text{the 4-th term of \eqref{super1024}}+\text{the 6-th term of \eqref{super1024}}=-\hbar\sum_{s \geq 0}{(-1)}^{p(i)}E_{i-1,i}(-s) E_{i,i+1}(s)
\end{align*}
and
\begin{align*}
\text{the 4-th term of \eqref{super1024}}+\text{the 6-th term of \eqref{super1024}}=\hbar\sum_{s \geq 0}{(-1)}^{p(i)}E_{i-1,i}(-s-1) E_{i,i+1}(s+1).
\end{align*}
hold. Since $a_{i,i-1}=-{(-1)}^{p(i)}$ holds, we have
\begin{align*}
&-\hbar\sum_{s \geq 0}{(-1)}^{p(i)}E_{i-1,i}(-s) E_{i,i+1}(s)+\hbar\sum_{s \geq 0}{(-1)}^{p(i)}E_{i-1,i}(-s-1) E_{i,i+1}(s+1)\\
&=-\hbar{(-1)}^{p(i)}E_{i-1,i} E_{i,i+1}=\dfrac{\hbar}{2} a_{i-1,i}\{E_{i,i+1},E_{i-1,i}\}-\dfrac{\hbar}{2} a_{i-1,i}[E_{i,i+1},E_{i-1,i}]
\end{align*}
and
\begin{align*}
&[(\alpha - (i-2\delta(i\geq m+1)(i-m)) \varepsilon_1) x_{i}^{+}, E_{j,j+1}]-[E_{i,i+1},(\alpha - (j-2\delta(j\geq m+1)(j-m)) \varepsilon_1) x_{j}^{+}]\nonumber\\
&\qquad\qquad=-{(-1)}^{p(i)}\ve_1[E_{i,i+1},E_{i-1,i}]=a_{i,i-1}\ve_1[E_{i,i+1},E_{i-1,i}].
\end{align*}
holds, $[\ev(x^+_{i,1}),\ev(x^+_{j,0})]-[\ev(x^+_{i,0}),\ev(x^+_{j,1})]$ is equal to
\begin{align*}
&[(\alpha - (i-2\delta(i\geq m+1)(i-m)) \varepsilon_1) x_{i}^{+}, E_{j,j+1}]-[E_{i,i+1},(\alpha - (j-2\delta(j\geq m+1)(j-m)) \varepsilon_1) x_{j}^{+}]\nonumber\\
&-\hbar{(-1)}^{p(i)}\{E_{i-1,i}, E_{i,i+1}\}+{(-1)}^{p(i)}\dfrac{\hbar}{2}E_{i-1,i+1}.
\end{align*}
Therefore, it is equal to $-a_{i,i-1}\dfrac{\hbar}{2}\{E_{i-1,i},E_{i,i+1}\}+a_{i,i-1}\dfrac{\ve_1-\ve_2}{2} E_{i-1,i+1}$.

{\bf Case 2,} $i\neq0$ and $j=0$

Suppose that $i\neq0$. First, we compute $[\ev(x^+_{i,1}),\ev(x^+_{0,0})]$. By the definition of $\ev$, we obtain
\begin{align}
&\phantom{{}={}}[\ev(x^+_{i,1}),\ev(x^+_{0,0})]\nonumber\\
&=[(\alpha - (i-2\delta(i\geq m+1)(i-m))\ve_1) x_{i}^{+}, E_{m+n,1}(1)]\nonumber\\
&\quad + [\hbar\sum_{s \geq 0}\sum_{k=1}^i {(-1)}^{p(k)}E_{i,k}(-s) E_{k,i+1}(s), E_{m+n,1}(1)]\nonumber\\
&\quad + [\hbar\sum_{s \geq 0}\sum_{k=i+1}^{m+n} {(-1)}^{p(k)}E_{i,k}(-s-1) E_{k,i+1}(s+1), E_{m+n,1}(1)].\label{super100}
\end{align}
By direct computation, the second term of \eqref{super100} is equal to
\begin{align}
&\hbar\sum_{s \geq 0}\sum_{k=1}^{m+n-1}\delta_{m+n,i+1}{(-1)}^{p(k)}E_{m+n-1,k}(-s) E_{k,1}(s+1)\nonumber\\
&\quad-\hbar\sum_{s \geq 0}{(-1)}^{p(E_{m+n,1})p(E_{1,i})+p(1)}E_{i,1}(-s) E_{m+n,i}(s+1)\nonumber\\
&\quad-\hbar \sum_{s \geq 0}\delta_{1,i}E_{m+n,1}(1-s) E_{1,2}(s)\label{611}
\end{align}
and the third term of \eqref{super100} is equal to
\begin{align}
&\hbar\sum_{s \geq 0}\sum_{k=i+1}^{m+n}{(-1)}^{p(m+n)}E_{m+n-1,m+n}(-s-1) E_{m+n,1}(s+2)\nonumber\\
&\quad+\hbar\sum_{s \geq 0}{(-1)}^{p(E_{m+n,1})p(E_{m+n,i})+p(m+n)}E_{i,1}(-s) E_{m+n,i}(s+1)\nonumber\\
&\quad-\hbar\sum_{s \geq 0}\sum_{k=2}^{m+n}\delta_{1,i}{(-1)}^{p(k)}E_{m+n,k}(-s) E_{k,1}(s+1)+\delta_{i,1}cE_{m+n,2}(1).\label{612}
\end{align}
Next, we rewrite the sum of the second term of \eqref{611} and the second term of \eqref{612} as follows;
\begin{align*}
&\phantom{{}={}}\text{the second term of \eqref{611}}+\text{the second term of \eqref{612}}=0.
\end{align*}
Therefore, $[\ev(x^+_{i,1}),\ev(x^+_{0,0})]$ is equal to
\begin{align}
&[(\alpha - i \varepsilon_1) x_{i}^{+}, E_{m+n,1}(1)]+\hbar\sum_{s \geq 0}\sum_{k=1}^{m+n-1}\delta_{m+n,i+1}{(-1)}^{p(k)}E_{m+n-1,k}(-s) E_{k,1}(s+1)\nonumber\\
&\quad-\hbar\sum_{s \geq 0}\delta_{1,i}E_{m+n,1}(1-s) E_{1,2}(s)\nonumber\\
&\quad+\hbar\sum_{s \geq 0}\delta_{m+n,i+1}{(-1)}^{p(m+n)}E_{m+n-1,m+n}(-s-1) E_{m+n,1}(s+2)\nonumber\\
&\quad-\hbar\sum_{s \geq 0}\sum_{k=2}^{m+n}\delta_{1,i}{(-1)}^{p(k)}E_{m+n,k}(-s) E_{k,1}(s+1)+\delta_{i,1}cE_{m+n,2}(1).\label{super700}
\end{align}

Next, let us compute $[\ev(x^+_{i,0}),\ev(x^+_{0,1})]$. By direct computation, we have
\begin{align}
&\phantom{{}={}}[\ev(x^+_{i,0}),\ev(x^+_{0,1})]\nonumber\\
&=[E_{i,i+1},(\alpha - (m-n) \varepsilon_1) x_{0}^{+}]+[E_{i,i+1}, \hbar \sum_{s \geq 0} \sum_{k=1}^{m+n} {(-1)}^{p(k)}E_{m+n,k}(-s) E_{k,1}(s+1)].\label{super800}
\end{align}
By direct computation, the second term of \eqref{super800} is equal to
\begin{align}
&\hbar \sum_{s \geq 0} \sum_{k=1}^{m+n}\delta_{m+n,i+1}{(-1)}^{p(k)}E_{m+n-1,k}(-s) E_{k,1}(s+1)\nonumber\\
&\quad-\hbar \sum_{s \geq 0} {(-1)}^{p(i)+p(E_{i,i+1})p(E_{m+n,i})}E_{m+n,i+1}(-s) E_{i,1}(s+1)\nonumber\\
&\quad+\hbar \sum_{s \geq 0}{(-1)}^{p(i+1)+p(E_{i,i+1})p(E_{m+n,i+1})}E_{m+n,i+1}(-s) E_{i,1}(s+1)\nonumber\\
&\quad-\hbar \sum_{s \geq 0} \sum_{k=1}^{m+n}\delta_{1,i}{(-1)}^{p(k)+p(E_{1,2})p(E_{m+n,1})}E_{m+n,k}(-s) E_{k,i+1}(s+1).\label{711}
\end{align}
The sum of the second term of \eqref{711} and the third term of \eqref{711} is equal to zero.
Thus, $[\ev(x^+_{i,0}),\ev(x^+_{0,1})]$ is equal to
\begin{align}
&[E_{i,i+1},(\alpha -(m-n) \varepsilon_1) x_{0}^{+}]+ \hbar \sum_{s \geq 0} \sum_{k=1}^{m+n}\delta_{m+n,i+1}{(-1)}^{p(k)}E_{m+n-1,k}(-s) E_{k,1}(s+1)\nonumber\\
&\quad-\hbar \sum_{s \geq 0} \sum_{k=1}^{m+n}\delta_{1,i}{(-1)}^{p(k)+p(E_{1,2})p(E_{m+n,1})}E_{m+n,k}(-s) E_{k,2}(s+1).\label{super701}
\end{align}

Therefore, when $i\neq0, 1, m+n-1$, $[\ev(x^+_{i,1}),\ev(x^+_{0,0})]-[\ev(x^+_{i,0}),\ev(x^+_{0,1})]$ is zero.
When $i=1$, $[\ev(x^+_{1,1}),\ev(x^+_{0,0})]-[\ev(x^+_{1,0}),\ev(x^+_{0,1})]$ is equal to
\begin{align}
&[(\alpha - \ve_1) x_{1}^{+}, E_{m+n,1}(1)]-[E_{1,2},(\alpha -(m-n) \varepsilon_1) x_{0}^{+}]\nonumber\\
&\quad-\hbar\sum_{s \geq 0}E_{m+n,1}(1-s) E_{1,2}(s)-\hbar\sum_{s \geq 0}\sum_{k=2}^{m+n}{(-1)}^{p(k)}E_{m+n,k}(-s) E_{k,2}(s+1)\nonumber\\
&\quad+cE_{m+n,2}(1)+\hbar \sum_{s \geq 0} \sum_{k=1}^{m+n}{(-1)}^{p(k)}E_{m+n,k}(-s) E_{k,2}(s+1).\label{2050}
\end{align}
By direct computation, we obtain
\begin{align*}
&\phantom{{}={}}\text{the third term of \eqref{2050}}+\text{the 4-th term of \eqref{2050}}+\text{the 6-th term of \eqref{2050}}\\
&=-\hbar E_{m+n,1}(1) E_{1,2}(0)=-\dfrac{\hbar}{2}\{E_{1,2}(0),E_{m+n,1}(1)\}+\dfrac{\hbar}{2}[E_{1,2}(0),E_{m+n,1}(1)].
\end{align*}
Moreover, by direct computation, we obtain
\begin{align*}
[(\alpha - \ve_1 )x_{1}^{+}, E_{m+n,1}(1)]-[E_{1,2},(\alpha -(m-n) \varepsilon_1 )x_{0}^{+}]=(m-n-1)\ve_1[x_{1}^{+}, E_{m+n,1}(1)].
\end{align*}
Therefore, $[\ev(x^+_{1,1}),\ev(x^+_{0,0})]-[\ev(x^+_{1,0}),\ev(x^+_{0,1})]$ is equal to
\begin{align*}
-\dfrac{\hbar}{2}\{E_{1,2}(0),E_{m+n,1}(1)\}-\dfrac{\ve_1-\ve_2}{2}[x_{1}^{+}, E_{m+n,1}(1)].
\end{align*}
by the assumption $\hbar c=(m-n)\varepsilon_1$.

When $i=m+n-1$, $[\ev(x^+_{m+n-1,1}),\ev(x^+_{0,0})]-[\ev(x^+_{m+n-1,0}),\ev(x^+_{0,1})]$ is equal to
\begin{align*}
&[(\alpha - (m-n+1)\ve_1)x_{m+n-1}^{+}, E_{m+n,1}(1)]-[E_{m+n-1,m+n},(\alpha -(m-n) \varepsilon_1) x_{0}^{+}]\nonumber\\
&+\hbar\sum_{s \geq 0}\sum_{k=1}^{m+n-1}{(-1)}^{p(k)}E_{m+n-1,k}(-s) E_{k,1}(s+1)\nonumber\\
&+\hbar\sum_{s \geq 0}\sum_{k=m+n}^{m+n}{(-1)}^{p(k)}E_{m+n-1,k}(-s-1) E_{k,1}(s+2)\nonumber\\
&-\hbar \sum_{s \geq 0} \sum_{k=1}^{m+n}{(-1)}^{p(k)}E_{m+n-1,k}(-s) E_{k,1}(s+1).\label{2051}
\end{align*}
By direct computation, we obtain
\begin{align*}
&\phantom{{}={}}\text{the third term of \eqref{2050}}+\text{the 4-th term of \eqref{2050}}+\text{the 5-th term of \eqref{2050}}\\
&=\hbar E_{m+n-1,m+n}(0) E_{m+n,1}(1)\\
&=\dfrac{\hbar}{2}\{E_{m+n-1,m+n}(0), E_{m+n,1}(1)\}+\dfrac{\hbar}{2}[E_{m+n-1,m+n}(0), E_{m+n,1}(1)].
\end{align*}
Moreover, by direct computation, we have
\begin{align*}
[(\alpha - (m-n+1)\ve_1)x_{i}^{+}, E_{m+n,1}(1)]-[E_{i,i+1},(\alpha -(m-n) \varepsilon_1) x_{0}^{+}]=-\ve_1[x^+_i,x^+_0]
\end{align*}

Then, $[\ev(x^+_{m+n-1,1}),\ev(x^+_{0,0})]-[\ev(x^+_{m+n-1,0}),\ev(x^+_{0,1})]$ is equal to
\begin{align*}
\dfrac{\hbar}{2}\{E_{m+n-1,m+n}(0), E_{m+n,1}(1)\}-\dfrac{\ve_1-\ve_2}{2}[x^+_{m+n-1},x^+_0].
\end{align*}

This completes the proof of \eqref{rel6}. 

\subsection{The proof of \eqref{rel7}}

Finally, we show $[\ev(h_{i,1}),\ev(h_{j,1})]=0$. Suppose that $i, j\neq0$. It is enough to show the case where $i<j$. We set
\begin{gather*}
A_i=\sum_{s \geq 0}\sum_{k=1}^{i} {(-1)}^{p(k)}E_{i,k}(-s) E_{k,i}(s),\quad B_i=\sum_{s \geq 0}\sum_{k=i+1}^{m+n} {(-1)}^{p(k)}E_{i,k}(-s-1) E_{k,i}(s+1),\\
C_i=\sum_{s \geq 0}\sum_{k=1}^{i} {(-1)}^{p(k)}E_{i+1,k}(-s) E_{k,i+1}(s),\quad D_i=\sum_{s \geq 0}\sum_{k=i+1}^{m+n} {(-1)}^{p(k)}E_{i+1,k}(-s-1) E_{k,i+1}(s+1).
\end{gather*}
Then, by the definition of $\ev(h_{i,1})$, we have
\begin{align*}
&\quad[\ev(h_{i,1}),\ev(h_{j,1})]\\
&={(-1)}^{p(i)+p(j)}\{[A_i, A_j]+[B_i,A_j]+[B_i,B_j]+[A_i,B_j]\}\\
&\quad+{(-1)}^{p(i)+p(j+1)}\{[A_i, C_j]+[B_i,C_j]+[B_i,D_j]+[A_i,D_j]\}\\
&\quad+{(-1)}^{p(i+1)+p(j)}\{[C_i,A_j]+[D_i,B_j]+[D_i,A_j]+[C_i,B_j]\}\\
&\quad+{(-1)}^{p(i+1)+p(j+1)}\{[C_i,C_j]+[D_i,C_j]+[D_i,D_j]+[C_i,D_j]\}.
\end{align*}
By the definition of $A_i$, $B_i$, $C_i$, and $D_i$, we obtain
\begin{gather*}
[A_i,B_j]=[A_i,D_j]=0.
\end{gather*}
Thus, it is enough to show the following lemma.
\begin{Lemma}
The following relations hold;
\begin{gather*}
[A_i, A_j]+[B_i,A_j]+[B_i,B_j]=0,\\
[A_i, C_j]+[B_i,C_j]+[B_i,D_j]+[A_i,D_j]=0,\\
[C_i,A_j]+[D_i,B_j]+[D_i,B_j]+[C_i,B_j]=0,\\
[C_i,C_j]+[D_i,C_j]+[D_i,D_j]=0.
\end{gather*}
\end{Lemma}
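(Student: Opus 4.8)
The plan is to recognize the four bracket identities as a single family of commutation relations among the \emph{diagonal quadratic currents}
$$T_a=\sum_{s\ge0}\sum_{k=1}^{a}(-1)^{p(k)}E_{a,k}(-s)E_{k,a}(s)+\sum_{s\ge0}\sum_{k=a+1}^{m+n}(-1)^{p(k)}E_{a,k}(-s-1)E_{k,a}(s+1),$$
so that $A_i+B_i=T_i$ and $C_i+D_i=T_{i+1}$. Each summand $E_{a,k}E_{k,a}$ is even, hence $T_a$ is even and $[T_a,T_b]=-[T_b,T_a]$ with the ordinary Leibniz rule. Since $i<j$, the index-disjointness argument already invoked for $[A_i,B_j]=[A_i,D_j]=0$ also yields $[C_i,D_j]=0$ (no shared row or column index, and no matching modes, so no central contribution either). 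Using $[A_i,B_j]=[C_i,D_j]=0$, the first and fourth relations equal $[T_i,T_j]$ and $[T_{i+1},T_{j+1}]$, while the second and third are literally $[T_i,T_{j+1}]$ and $[T_{i+1},T_j]$. It therefore suffices to prove
$$[T_a,T_b]=0\qquad(a\in\{i,i+1\},\ b\in\{j,j+1\},\ i<j),$$
for which $a\le b$ always holds, with $a=b$ occurring only in the trivial case $j=i+1$ where $[T_{i+1},T_{i+1}]=0$.

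To compute $[T_a,T_b]$ I would first upgrade the Claim \eqref{equation154}--\eqref{equation155} to arbitrary modes, that is, compute $[T_a,E_{x,y}(w)]$ for all $w\in\mathbb{Z}$ rather than only $w=0$. The two summation ranges defining $T_a$ are exactly those covered by \eqref{equation154} (for $k\le a$) and \eqref{equation155} (for $k>a$), but with $E_{x,y}(w)$ in place of $E_{x,y}$ one now additionally picks up the central contributions $s\,\delta_{s+u,0}\,\text{str}(\cdot)\,c$ and the $z$-terms of the defining relations of $\widehat{\mathfrak{gl}}(m|n)$. The decisive structural point is that the mode shifts in $T_a$ (the split $-s,s$ for $k\le a$ versus $-s-1,s+1$ for $k>a$) are tuned so that $T_a$ is a balanced normally ordered sum; I expect $[T_a,E_{x,y}(w)]$ to come out again as a finite combination of single currents $E_{\cdot,\cdot}(w)$ together with a scalar multiple of $E_{x,y}(w)$ and a central term.

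With this in hand I would expand $[T_a,T_b]$ by the Leibniz rule, writing $T_b=\sum_{s,l}(-1)^{p(l)}E_{b,l}(\cdot)E_{l,b}(\cdot)$ and moving $T_a$ across each factor via the mode-refined formula. The transport terms (the $\delta_{b,\cdot}$ pieces and the $-(-1)^{\cdots}$ pieces) should reorganize into quartic normally ordered expressions that telescope in the summation index and hence vanish in the degreewise completion; convergence is automatic since each graded component is a finite sum. Grouping the surviving pieces and using the antisymmetry $[T_a,T_b]=-[T_b,T_a]$ together with the supertrace weights $(-1)^{p(k)}$ should then force the result to zero.

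The hard part will be the bookkeeping of the two kinds of \emph{anomalous} contributions. First, re-indexing one infinite normally ordered sum against the other produces finitely many boundary terms — the super-analogues of the $\delta(x\le a<y)-\delta(x>a\ge y)$ terms in \eqref{equation154} — now occurring at \emph{both} diagonals $a$ and $b$, and these must cancel in pairs. Second, the affine relations generate central terms in $c$ and $z$; since $A_i,B_i,C_i,D_i$ carry no explicit $c$ when $i,j\neq0$, these central terms must cancel \emph{internally}, independently of the constraint $c\hbar=(-m+n)\ve_1$ (which enters only the $i=0$ or $j=0$ cases). Verifying this internal cancellation, through the precise mode shifts and the $\sum_k(-1)^{p(k)}$ supertrace identities, is the main obstacle; once it is settled, the remaining bracket-chasing follows the template of \cite{K1}.
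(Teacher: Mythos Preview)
Your reduction to $[T_a,T_b]=0$ is sound, and the underlying method is the same as the paper's: expand the bracket into normally ordered cubic terms plus finitely many boundary terms and verify cancellation. One small correction: $C_i+D_i$ is not literally $T_{i+1}$, since the split in $C_i,D_i$ is at $k=i$ while the split in $T_{i+1}$ is at $k=i+1$; the difference is $(-1)^{p(i+1)}E_{i+1,i+1}(0)^2$. This is harmless, because each summand $E_{b,k}(\cdot)E_{k,b}(\cdot)$ of $T_b$ has zero weight under $\ad E_{i+1,i+1}$, so the discrepancy commutes with every $T_b$ and your reduction still goes through. Your observation that $[C_i,D_j]=0$ for $i<j$ is correct and is implicitly used by the paper as well (the fourth identity has only three terms).

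The paper does not package the computation via $T_a$; it computes $[A_i,A_j]$, $[B_i,A_j]$, $[B_i,B_j]$ separately as explicit cubic sums with residual quadratic boundary terms of the form $\sum_{s\ge0} s\bigl(E_{i,i}(-s)E_{j,j}(s)-E_{j,j}(-s)E_{i,i}(s)\bigr)$, then adds the three and checks that every piece cancels, leaving the other three identities to the same template. Your $T_a$ framing treats all four identities at once and avoids the artificial $k\le i$ versus $k>i$ split in the intermediate steps, so the bookkeeping should be marginally cleaner; but it is the same brute-force expansion, not a different argument. The ``hard part'' you flag --- the internal cancellation of the boundary and central-type terms, independent of the relation $c\hbar=(-m+n)\ve_1$ --- is exactly what the paper verifies line by line for the first identity, and your expectation that it cancels without invoking the level constraint is confirmed there: no $c$-term survives because for $i<j$ the index ranges never allow a bracket $[E_{p,q}(r),E_{q,p}(-r)]$ with $r\ne0$, and the residual quadratic terms cancel between the $[A_i,A_j]$ and $[B_i,A_j]$ blocks.
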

\begin{proof}
We only show that $[A_i, A_j]+[B_i,A_j]+[B_i,B_j]=0$ holds. Other relations are obtained in the same way. By direct computation, we can rewrite $[A_i, A_j]$ as follows;
\begin{align}
&\phantom{{}={}}[A_i, A_j]\nonumber\\
&=-\sum_{s,t\geq 0}\sum_{k=1}^{i}{(-1)}^{p(k)+p(i)+p(E_{i,k})p(E_{j,i})}E_{j,k}(-s-t) E_{i,j}(t)E_{k,i}(s)\nonumber\\
&\quad+\sum_{s,t\geq 0}\sum_{k=1}^{i}\sum_{l=1}^{j}\delta_{k,l}{(-1)}^{p(E_{i,k})p(E_{j,k})}E_{j,k}(-t) E_{i,j}(-s+t)E_{k,i}(s)\nonumber\\
&\quad-\sum_{s,t\geq 0}\sum_{k=1}^{i}\sum_{l=1}^{j}\delta_{k,l}{(-1)}^{p(E_{k,i})p(E_{j,k})}E_{i,k}(-s)E_{j,i}(s-t) E_{k,j}(t)\nonumber\\
&\quad+\sum_{s,t\geq 0}\sum_{k=1}^{i}{(-1)}^{p(k)+p(i)+p(E_{k,i})p(E_{i,j})}E_{i,k}(-s)E_{j,i}(-t) E_{k,j}(s+t)\nonumber\\
&\quad+\sum_{s\geq0}(sE_{i,i}(-s)E_{j,j}(s)-sE_{j,j}(-s)E_{i,i}(s)).\label{aij}
\end{align}
Since we find two relations
\begin{align*}
&\quad\text{the second term of \eqref{aij}}\\
&=\sum_{s,t\geq 0}\sum_{k=1}^{i}\sum_{l=1}^{j}\delta_{k,l}{(-1)}^{p(E_{i,k})p(E_{j,k})}E_{j,k}(-s-t) E_{i,j}(t)E_{k,i}(s)\\
&\quad+\sum_{s,t\geq 0}\sum_{k=1}^{i}\sum_{l=1}^{j}\delta_{k,l}{(-1)}^{p(E_{i,k})p(E_{j,k})}E_{j,k}(-s) E_{i,j}(-t-1)E_{k,i}(s+t+1),\\
&\quad\text{the third term of \eqref{aij}}\\
&=\sum_{s,t\geq 0}\sum_{k=1}^{i}\sum_{l=1}^{j}\delta_{k,l}{(-1)}^{p(E_{k,i})p(E_{j,l})}E_{i,k}(-s-t-1)E_{j,i}(s+1) E_{k,j}(t)\\
&\quad+\sum_{s,t\geq 0}\sum_{k=1}^{i}\sum_{l=1}^{j}\delta_{k,l}{(-1)}^{p(E_{k,i})p(E_{k,j})}E_{i,k}(-s)E_{j,i}(-t) E_{k,j}(s+t),
\end{align*}
we have
\begin{align}
&\phantom{{}={}}[A_i, A_j]\nonumber\\
&=-\sum_{s,t\geq 0}\sum_{k=1}^{i}{(-1)}^{p(k)+p(i)+p(E_{i,k})p(E_{j,i})}E_{j,k}(-s-t) E_{i,j}(t)E_{k,i}(s)\nonumber\\
&\quad+\sum_{s,t\geq 0}\sum_{k=1}^{i}\sum_{l=1}^{j}\delta_{k,l}{(-1)}^{p(E_{i,k})p(E_{j,k})}E_{j,k}(-s-t) E_{i,j}(t)E_{k,i}(s)\nonumber\\
&\quad+\sum_{s,t\geq 0}\sum_{k=1}^{i}\sum_{l=1}^{j}\delta_{k,l}{(-1)}^{p(E_{i,k})p(E_{j,k})}E_{j,k}(-s) E_{i,j}(-t-1)E_{k,i}(s+t+1)\nonumber\\
&\quad-\sum_{s,t\geq 0}\sum_{k=1}^{i}\sum_{l=1}^{j}\delta_{k,l}{(-1)}^{p(E_{k,i})p(E_{j,l})}E_{i,k}(-s-t-1)E_{j,i}(s+1) E_{k,j}(t)\nonumber\\
&\quad-\sum_{s,t\geq 0}\sum_{k=1}^{i}\sum_{l=1}^{j}\delta_{k,l}{(-1)}^{p(E_{k,i})p(E_{k,j})}E_{i,k}(-s)E_{j,i}(-t) E_{k,j}(s+t)\nonumber\\
&\quad+\sum_{s,t\geq 0}\sum_{k=1}^{i}{(-1)}^{p(k)+p(i)+p(E_{k,i})p(E_{i,j})}E_{i,k}(-s)E_{j,i}(-t) E_{k,j}(s+t)\nonumber\\
&\quad+\sum_{s\geq0}(sE_{i,i}(-s)E_{j,j}(s)-sE_{j,j}(-s)E_{i,i}(s)).\label{aij1}
\end{align}
We simplify the right hand side of \eqref{aij1}. By direct computation, we obtain
\begin{align}
&\quad\text{the first term of \eqref{aij1}}+\text{the second term of \eqref{aij1}}\nonumber\\
&=-\sum_{s,t\geq 0}\sum_{k=1}^{i}{(-1)}^{p(k)+p(i)+p(E_{i,k})p(E_{j,i})}E_{j,k}(-s-t) E_{i,j}(t)E_{k,i}(s)\nonumber\\
&\quad+\sum_{s,t\geq 0}\sum_{k=1}^{i}{(-1)}^{p(E_{i,k})p(E_{j,k})}E_{j,k}(-s-t) E_{i,j}(t)E_{k,i}(s)\nonumber\\
&=0\label{aij2}
\end{align}
since $p(k)+p(i)+p(E_{i,k})p(E_{j,i})=p(E_{i,k})p(E_{j,k})$.
Similarly, we have
\begin{align}
\text{the 4-th term of \eqref{aij1}}+\text{the 6-th term of \eqref{aij1}}=0.\label{aij3}
\end{align}
By \eqref{aij2} and \eqref{aij3}, we find the equality
\begin{align*}
&\phantom{{}={}}[A_i, A_j]\\
&=\sum_{s,t\geq 0}\sum_{k=1}^{i}\sum_{l=1}^{j}\delta_{k,l}{(-1)}^{p(k)+p(l)+p(E_{i,k})p(E_{j,l})}E_{j,l}(-s) E_{i,j}(-t-1)E_{k,i}(s+t+1)\\
&\quad-\sum_{s,t\geq 0}\sum_{k=1}^{i}\sum_{l=1}^{j}\delta_{k,l}{(-1)}^{p(k)+p(l)+p(E_{k,i})p(E_{j,l})}E_{i,k}(-s-t-1)E_{j,i}(s+1) E_{l,j}(t)\nonumber\\
&\quad+\sum_{s\geq0}(sE_{i,i}(-s)E_{j,j}(s)-sE_{j,j}(-s)E_{i,i}(s)).
\end{align*}
Computing the parity, we obtain
\begin{align}
&\phantom{{}={}}[A_i, A_j]\nonumber\\
&=\sum_{s,t\geq 0}\sum_{k=1}^{i}{(-1)}^{p(E_{i,k})p(E_{j,k})}E_{j,k}(-s) E_{i,j}(-t-1)E_{k,i}(s+t+1)\nonumber\\
&\quad-\sum_{s,t\geq 0}\sum_{k=1}^{i}{(-1)}^{p(E_{k,i})p(E_{j,k})}E_{i,k}(-s-t-1)E_{j,i}(s+1) E_{k,j}(t)\nonumber\\
&\quad+\sum_{s\geq0}(sE_{i,i}(-s)E_{j,j}(s)-sE_{j,j}(-s)E_{i,i}(s)).\label{aij4}
\end{align}
Similarly, by direct computation, we have
\begin{align}
&\phantom{{}={}}[B_i,B_j]\nonumber\\
&=\sum_{s,t \geq 0}\sum_{l=j+1}^{m+n}{(-1)}^{p(j)+p(l)+p(E_{j,l})p(E_{j,i})}E_{i,l}(-s-t-2)E_{j,i}(s+1)E_{l,j}(t+1)\nonumber\\
&\quad-\sum_{s,t \geq 0}\sum_{k=i+1}^{m+n}\sum_{l=j+1}^{m+n}\delta_{k,l}{(-1)}^{p(E_{j,k})p(E_{k,i})}E_{i,k}(-s-t-2)E_{j,i}(s+1)E_{k,j}(t+1)\nonumber\\
&\quad-\sum_{s,t \geq 0}\sum_{k=i+1}^{m+n}\sum_{l=j+1}^{m+n}\delta_{k,l}{(-1)}^{p(E_{j,k})p(E_{k,i})}E_{i,k}(-s-1)E_{j,i}(-t)E_{k,j}(s+t+1)\nonumber\\
&\quad+\sum_{s,t \geq 0}\sum_{k=i+1}^{m+n}\sum_{l=j+1}^{m+n}\delta_{k,l}{(-1)}^{p(E_{k,i})p(E_{k,j})}E_{j,l}(-s-t-1)E_{i,j}(t)E_{k,i}(s+1)\nonumber\\
&\quad+\sum_{s,t \geq 0}\sum_{k=i+1}^{m+n}\sum_{l=j+1}^{m+n}\delta_{k,l}{(-1)}^{p(E_{k,i})p(E_{k,j})}E_{j,k}(-t-1)E_{i,j}(-s-1)E_{k,i}(s+t+2)\nonumber\\
&\quad-\sum_{s,t \geq 0}\sum_{l=j+1}^{m+n}{(-1)}^{p(j)+p(l)+p(E_{j,i})p(E_{l,j})}E_{j,l}(-t-1)E_{i,j}(-s-1)E_{l.i}(s+t+2).\label{bij1}
\end{align}
We simplify the right hand side of \eqref{bij1}. By direct computation, we obtain
\begin{align}
\text{the first term of \eqref{bij1}}+\text{the second term of \eqref{bij1}}=0\label{bij2}
\end{align}
and
\begin{align}
\text{the 5-th term of \eqref{bij1}}+\text{the 6-th term of \eqref{bij1}}=0\label{bij3}
\end{align}
By \eqref{bij2} and \eqref{bij3}, we find the equality
\begin{align}
&\phantom{{}={}}[B_i,B_j]\nonumber\\
&=-\sum_{s,t \geq 0}\sum_{k=i+1}^{m+n}\sum_{l=j+1}^{m+n}\delta_{k,l}{(-1)}^{p(E_{j,k})p(E_{k,i})}E_{i,k}(-s-1)E_{j,i}(-t)E_{k,j}(s+t+1)\nonumber\\
&\quad+\sum_{s,t \geq 0}\sum_{k=i+1}^{m+n}\sum_{l=j+1}^{m+n}\delta_{k,l}{(-1)}^{p(E_{k,i})p(E_{k,j})}E_{j,k}(-s-t-1)E_{i,j}(t)E_{k,i}(s+1)\nonumber\\
&=-\sum_{s,t \geq 0}\sum_{l=j+1}^{m+n}{(-1)}^{p(E_{j,l})p(E_{l,i})}E_{i,l}(-s-1)E_{j,i}(-t)E_{l,j}(s+t+1)\nonumber\\
&\quad+\sum_{s,t \geq 0}\sum_{l=j+1}^{m+n}{(-1)}^{p(E_{l,i})p(E_{l,j})}E_{j,l}(-s-t-1)E_{i,j}(t)E_{l,i}(s+1).\label{bij5}
\end{align}
By direct computation, we also obtain
\begin{align}
&\phantom{{}={}}[B_i,A_j]\nonumber\\
&=\sum_{s,t \geq 0}\sum_{l=1}^{j}{(-1)}^{p(j)+p(l)}E_{i,l}(-s-t-1) E_{l,j}(t)E_{j,i}(s+1)\nonumber\\
&\quad-\sum_{s,t \geq 0}\sum_{k=i+1}^{m+n}{(-1)}^{p(k)+p(i)+p(E_{i,k})p(E_{j,i})}E_{j,k}(-s-t-1) E_{i,j}(t)E_{k,i}(s+1)\nonumber\\
&\quad+\sum_{s,t \geq 0}\sum_{k=i+1}^{m+n}\sum_{l=1}^{j}\delta_{k,l}{(-1)}^{p(E_{i,k})p(E_{j,l})}E_{j,l}(-s-t-1) E_{i,j}(t)E_{k,i}(s+1)\nonumber\\
&\quad+\sum_{s,t \geq 0}\sum_{k=i+1}^{m+n}\sum_{l=1}^{j}\delta_{k,l}{(-1)}^{p(E_{i,k})p(E_{j,l})}E_{j,l}(-s) E_{i,j}(-t-1)E_{k,i}(s+t+1)\nonumber\\
&\quad-\sum_{s,t \geq 0}\sum_{k=i+1}^{m+n}\sum_{l=1}^{j}\delta_{k,l}{(-1)}^{p(E_{k,i})p(E_{j,l})}E_{i,k}(-s-1)E_{j,i}(-t) E_{l,j}(s+t+1)\nonumber\\
&\quad-\sum_{s,t \geq 0}\sum_{k=i+1}^{m+n}\sum_{l=1}^{j}\delta_{k,l}{(-1)}^{p(E_{k,i})p(E_{j,l})}E_{i,k}(-s-t-1)E_{j,i}(s+1) E_{l,j}(t)\nonumber\\
&\quad+\sum_{s,t \geq 0}\sum_{k=i+1}^{m+n}{(-1)}^{p(k)+p(i)+p(E_{k,i})p(E_{i,j})}E_{i,k}(-s-1)E_{j,i}(-t) E_{k,j}(s+t+1)\nonumber\\
&\quad-\sum_{s,t \geq 0}\sum_{l=1}^{j}{(-1)}^{p(j)+p(l)}E_{i,j}(-s-1)E_{j,l}(-t) E_{l,i}(s+t+1).\label{aibj}
\end{align}
Let us simplify the right hand side of \eqref{aibj}. We prepare the following four relations by direct computation;
\begin{align}
&\quad\text{the second term of \eqref{aibj}}+\text{the third term of \eqref{aibj}}\nonumber\\
&=-\sum_{s,t \geq 0}\sum_{k=i+1}^{m+n}{(-1)}^{p(k)+p(i)+p(E_{i,k})p(E_{j,i})}E_{j,k}(-s-t-1) E_{i,j}(t)E_{k,i}(s+1)\nonumber\\
&\quad+\sum_{s,t \geq 0}\sum_{k=i+1}^{m+n}\sum_{l=1}^{j}\delta_{k,l}{(-1)}^{p(E_{i,k})p(E_{j,k})}E_{j,k}(-s-t-1) E_{i,j}(t)E_{k,i}(s+1)\nonumber\\
&=-\sum_{s,t \geq 0}\sum_{k=j+1}^{m+n}{(-1)}^{p(E_{i,k})p(E_{j,i})}E_{j,k}(-s-t-1) E_{i,j}(t)E_{k,i}(s+1),\label{aibj2}\\
&\quad\text{the first term of \eqref{aibj}}+\text{the 6-th term of \eqref{aibj}}\nonumber\\
&=\sum_{s,t \geq 0}\sum_{l=1}^{j}{(-1)}^{p(j)+p(l)}E_{i,l}(-s-t-1) E_{l,j}(t)E_{j,i}(s+1)\nonumber\\
&\quad-\sum_{s,t \geq 0}\sum_{l=1}^j\sum_{k=i+1}^{m+n}\delta_{k,l}{(-1)}^{p(E_{k,i})p(E_{j,k})}E_{i,k}(-s-t-1)E_{j,i}(s+1) E_{k,j}(t)\nonumber\\
&=\sum_{s,t \geq 0}\sum_{k=1}^{i}{(-1)}^{p(E_{k,i})p(E_{j,k})}E_{i,k}(-s-t-1)E_{j,i}(s+1) E_{k,j}(t)\nonumber\\
&\quad+\sum_{s,t \geq 0}\sum_{l=1}^{j}{(-1)}^{p(j)+p(l)}E_{i,l}(-s-t-1)[E_{l,j}(t),E_{j,i}(s+1)]\nonumber\\
&=\sum_{s,t \geq 0}\sum_{k=1}^{i}{(-1)}^{p(E_{k,i})p(E_{j,k})}E_{i,k}(-s-t-1)E_{j,i}(s+1) E_{k,j}(t)\nonumber\\
&\quad+\sum_{s,t \geq 0}\sum_{l=1}^{j}{(-1)}^{p(j)+p(l)}E_{i,l}(-s-t-1)E_{l,i}(s+t+1)\nonumber\\
&\quad-\sum_{s,t \geq 0}E_{i,i}(-s-t-1)E_{j,j}(s+t+1),\label{aibj2.1}\\
&\quad\text{the 4-th term of \eqref{aibj}}+\text{the 8-th term of \eqref{aibj}}\nonumber\\
&=\sum_{s,t \geq 0}\sum_{k=i+1}^{m+n}\sum_{l=1}^j\delta_{k,l}{(-1)}^{p(E_{i,k})p(E_{j,k})}E_{j,k}(-s) E_{i,j}(-t-1)E_{k,i}(s+t+1)\nonumber\\
&\quad-\sum_{s,t \geq 0}\sum_{l=1}^{j}{(-1)}^{p(k)+p(l)}E_{i,j}(-s-1)E_{j,l}(-t) E_{l,i}(s+t+1)\nonumber\\
&=-\sum_{s,t \geq 0}\sum_{l=1}^{i}{(-1)}^{p(E_{i,l})p(E_{j,l})}E_{j,l}(-s) E_{i,j}(-t-1)E_{l,i}(s+t+1)\nonumber\\
&\quad-\sum_{s,t \geq 0}\sum_{l=1}^{j}{(-1)}^{p(j)+p(l)}[E_{i,j}(-s-1),E_{j,l}(-t)] E_{l,i}(s+t+1)\nonumber\\
&=-\sum_{s,t \geq 0}\sum_{l=1}^{i}{(-1)}^{p(E_{i,l})p(E_{j,l})}E_{j,l}(-s) E_{i,j}(-t-1)E_{l,i}(s+t+1)\nonumber\\
&\quad-\sum_{s,t \geq 0}\sum_{l=1}^{j}{(-1)}^{p(j)+p(l)}E_{i,l}(-s-t-1)E_{l,i}(s+t+1)\nonumber\\
&\quad+\sum_{s,t \geq 0}E_{j,j}(-s-t-1)E_{i,i}(s+t+1),\label{aibj3}\\
&\quad\text{the 5-th term of \eqref{aibj}}+\text{the 7-th term of \eqref{aibj}}\nonumber\\
&=-\sum_{s,t \geq 0}\sum_{k=i+1}^{m+n}\sum_{l=1}^{j}\delta_{k,l}{(-1)}^{p(E_{k,i})p(E_{j,k})}E_{i,k}(-s-1)E_{j,i}(-t) E_{k,j}(s+t+1)\nonumber\\
&\quad+\sum_{s,t \geq 0}\sum_{k=i+1}^{m+n}{(-1)}^{p(k)+p(i)+p(E_{k,i})p(E_{i,j})}E_{i,k}(-s-1)E_{j,i}(-t) E_{k,j}(s+t+1)\nonumber\\
&=\sum_{s,t \geq 0}\sum_{k=j+1}^{m+n}{(-1)}^{p(E_{k,i})p(E_{j,k})}E_{i,k}(-s-1)E_{j,i}(-t) E_{k,j}(s+t+1).\label{aibj4}
\end{align}
Thus, by \eqref{aibj2}-\eqref{aibj4}, we have
\begin{align}
&\phantom{{}={}}[B_i,A_j]\nonumber\\
&=-\sum_{s,t \geq 0}\sum_{k=j+1}^{m+n}{(-1)}^{p(E_{i,k})p(E_{j,i})}E_{j,k}(-s-t-1) E_{i,j}(t)E_{k,i}(s+1)\nonumber\\
&\quad+\sum_{s,t \geq 0}\sum_{k=1}^{i}{(-1)}^{p(E_{k,i})p(E_{j,k})}E_{i,k}(-s-t-1)E_{j,i}(s+1) E_{k,j}(t)\nonumber\\
&\quad-\sum_{s,t \geq 0}\sum_{l=1}^{i}{(-1)}^{p(E_{i,l})p(E_{j,l})}E_{j,l}(-s) E_{i,j}(-t-1)E_{l,i}(s+t+1)\nonumber\\
&\quad+\sum_{s,t \geq 0}\sum_{k=j+1}^{m+n}{(-1)}^{p(E_{k,i})p(E_{j,k})}E_{i,k}(-s-1)E_{j,i}(-t) E_{k,j}(s+t+1)\nonumber\\
&\quad-\sum_{s\geq0}(sE_{i,i}(-s)E_{j,j}(s)-sE_{j,j}(-s)E_{i,i}(s)).\label{aibj5}
\end{align}
Adding \eqref{aij4}, \eqref{bij5}, and \eqref{aibj5}, we obtain $[A_i, A_j]+[B_i,A_j]+[B_i,B_j]=0$.
\end{proof} 
This completes the proof of Lemma~\ref{Lemma1}.
\bibliographystyle{plain}
\bibliography{affmod2}
\end{document}